\numberwithin{equation}{subsection}
\newtheorem{theorem}{Theorem}[section]
\newtheorem{lemma}[theorem]{Lemma}
\newtheorem{proposition}[theorem]{Proposition}
\newtheorem{corollary}[theorem]{Corollary}
\newtheorem{application}[theorem]{Application}
\newtheorem{claim}[theorem]{Claim}
\newtheorem*{theoremtree}{Theorem \ref{thdegentree}(c)}
\newtheorem*{cornotss}{Corollary \ref{Cnotsupersing}}
\newtheorem*{theoremmono}{Theorem \ref{Tprankhypermono}/\ref{Tprankzerohypermono}}
\newtheorem*{theoremend}{Theorem \ref{Tendz}}
\theoremstyle{definition}
\newtheorem{remark}[theorem]{Remark}
\newtheorem{question}[theorem]{Question}
\newcommand{\invlim}[1]{\lim_{\stackrel{\leftarrow}{#1}}}
\newcommand{\til}[1]{{\widetilde{#1}}}
\def\calf{{\mathcal F}}
\def\calm{{\mathcal M}}
\def\shim{{\mathcal Sh}}
\newcommand{\e}[1]{{\rm\bf E}}
\def\e{{\rm\bf E}}
\def\res{{\rm\bf R}}
\newcommand{\oneover}[1]{\frac{1}{#1}}
\def\geom{{\rm geom}}
\def\et{{\text{\'et}}}
\def\del{{\partial}}
\def\mmu{{\pmb \mu}}
\newcommand{\st}[1]{\{#1\}}
\def\ra{\rightarrow}
\DeclareMathOperator{\sym}{Sym}
\global\let\hom\undefined
\DeclareMathOperator{\hom}{Hom}
\DeclareMathOperator{\spec}{Spec}
\DeclareMathOperator{\aut}{Aut}
\DeclareMathOperator{\gl}{GL}
\DeclareMathOperator{\SL}{SL}
\DeclareMathOperator{\pic}{Pic}
\DeclareMathOperator{\diag}{diag}
\DeclareMathOperator{\mat}{Mat}
\DeclareMathOperator{\jac}{Jac}
\def\sp{{\mathop{\rm Sp}}}
\DeclareMathOperator{\psp}{PSp}
\DeclareMathOperator{\gsp}{GSp}
\DeclareMathOperator{\End}{End}
\DeclareMathOperator{\gal}{Gal}
\def\tensor{\otimes}
\def\integ{\mathbb Z}
\def\gp{\mathbb G}
\def\proj{\mathbb P}
\newcommand{\abs}[1]{{\left|#1\right|}}
\newcommand{\rest}[1]{|_{#1}}
\def\rat{\mathbb Q}
\def\ff{\mathbb F}
\def\real{\mathbb R}
\def\inject{\hookrightarrow}
\def\cross{\times}
\def\units{^\cross}
\def\iso{\cong}
\def\mono{{\sf M}}
\def\cala{{\mathcal A}}
\def\calc{{\mathcal C}}
\def\calh{{\mathcal H}}
\def\calo{{\mathcal O}}
\def\calx{{\mathcal X}}
\def\inv{^{-1}}
\def\comp{\circ}
\renewcommand{\bar}[1]{{\overline{#1}}}
\newcommand{\floor}[1]{{\lfloor #1 \rfloor}}
\newenvironment{alphabetize}{\begin{enumerate}
\def\theenumi{\alph{enumi}}

}{\end{enumerate}}
\begin{document}


\title{The $p$-rank strata of the moduli space of hyperelliptic curves}

\author{Jeffrey D. Achter}
\email{j.achter@colostate.edu}
\address{Department of Mathematics, Colorado State University, Fort
Collins, CO 80523} 

\author{Rachel Pries}
\email{pries@math.colostate.edu}
\address{Department of Mathematics, Colorado State University, Fort
Collins, CO 80523} 
\thanks{The first author was partially supported by NSA grant H98230-08-1-0051.
  The second author was partially supported by NSF grant DMS-07-01303.}

\begin{abstract}
We prove results about the intersection of the $p$-rank strata and the
boundary of the moduli space of hyperelliptic curves in characteristic $p \geq 3$.  
This yields a strong technique that allows us to analyze the 
stratum $\calh_g^f$ of hyperelliptic curves of genus $g$ and $p$-rank $f$.
Using this, we prove that the endomorphism ring of the Jacobian of a generic hyperelliptic 
curve of genus $g$ and $p$-rank $f$ is isomorphic to $\integ$ if $g \geq 4$. 
Furthermore, we prove that the $\integ/\ell$-monodromy of
every irreducible component of $\calh_g^f$ is the symplectic
group $\sp_{2g}(\integ/\ell)$ if $g \geq 4$ or $f \geq 1$, and $\ell \not = p$ is an odd prime
(with mild hypotheses on $\ell$ when $f =0$).
These results yield numerous applications about the generic behavior of 
hyperelliptic curves of given genus and $p$-rank over finite fields, including applications 
about Newton polygons, absolutely simple Jacobians, class groups and zeta functions.\\
\subjclass[MSC 2000]{14H10, 11G20, 14D05}
\end{abstract}

\maketitle


\section{Introduction}

Suppose $C$ is a smooth connected projective hyperelliptic curve of genus $g \geq 1$ over
an algebraically closed field $k$ of characteristic $p \geq 3$.  The
Jacobian $\pic^0(C)$ is a principally polarized abelian variety of
dimension $g$.  The number of physical  $p$-torsion points of $\pic^0(C)$ is $p^f$
for some integer $f$, called the $p$-rank of $C$, with $0 \le f \le g$. 

Let $\calh_g$ be the moduli space over $k$ of smooth connected projective
hyperelliptic curves of genus $g$; it is a smooth Deligne-Mumford stack over $k$.
The $p$-rank induces a stratification $\calh_g= \cup \calh_g^f$ by
locally closed reduced substacks $\calh_g^f$,
whose geometric points correspond to hyperelliptic curves of genus $g$ and $p$-rank $f$.     

In this paper, we prove three cumulative results about $\calh_g^f$.  
The first is about the boundary of $\calh_g^f$; specifically, when $g \geq 2$, we prove that 
the boundary of every irreducible component of $\calh_g^f$ 
contains the moduli point of some singular curve which is 
a tree of elliptic curves and which has $p$-rank $f$.
The second is that the Jacobian of a generic curve of genus $g$ and
$p$-rank $f$ has endomorphism ring $\integ$ if $g \geq 4$.
The third is that, for an odd prime number $\ell$ distinct from $p$,
the mod-$\ell$ monodromy group of every irreducible component of $\calh^f_g$ is the symplectic group
$\sp_{2g}(\integ/\ell)$ if $g \geq 4$ or $f \geq 1$ (with mild hypotheses on $\ell$ when $f =0$). 
Heuristically, this means that $p$-rank constraints alone do not force the existence of
extra automorphisms (or other algebraic cycles) on a family of hyperelliptic curves.

We now state the results of this paper more precisely.

\begin{theoremtree}
Suppose $p$ is an odd prime, $g \ge 2$, and $ 0 \le f \le g$.  
Let $S$ be an irreducible component of $\calh_g^f$, the $p$-rank
$f$ stratum in $\calh_g$.  Then the closure $\bar S$ of $S$ in $\bar \calh_g$ 
contains the moduli point of some 
tree of $g$ elliptic curves, of which $f$ are ordinary and $g-f$ are
supersingular.
\end{theoremtree}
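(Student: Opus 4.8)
The strategy is an induction on $g$, using the structure of the boundary of $\bar\calh_g$ in characteristic $p$. The base case $g=2$ requires a direct analysis: the boundary of $\bar\calh_2$ contains loci parametrizing curves that degenerate to two elliptic curves joined at a point (a "banana" / compact type degeneration), and one must check that each $p$-rank stratum $\calh_2^f$ for $f=0,1,2$ limits onto such a configuration with the correct number of ordinary vs. supersingular factors. For $f=2$ this is classical (take any product of ordinary elliptic curves in the closure); for $f=0$ and $f=1$ one uses that the $p$-rank is semicontinuous and that the $p$-rank of a tree of elliptic curves is the number of ordinary components, together with a dimension count showing the stratum cannot avoid the relevant boundary component.

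For the inductive step, the key geometric input is that the boundary divisor of $\bar\calh_g$ admits components of the form $\bar\calh_{g_1}\times_{?}\bar\calh_{g_2}$ (curves degenerating to two hyperelliptic curves of genera $g_1+g_2=g$ meeting at a Weierstrass point) and components where a hyperelliptic curve of genus $g-1$ acquires an elliptic tail. First I would establish a "purity"/dimension statement: if $S$ is an irreducible component of $\calh_g^f$, then $\dim S = 2g-1+f-?$ — more precisely, $\calh_g^f$ is pure of codimension $g-f$ in $\calh_g$ (this should be quotable from the $p$-rank stratification theory, or proved by the usual Cartier-divisor argument applied to the universal $p$-divisible group). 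Then $\bar S$ has dimension large enough that it must meet the boundary; and along a generic boundary point, $\bar S$ restricts to a union of $p$-rank strata in the boundary. The combinatorial point is that when a genus-$g$ curve degenerates to (genus $g-1$ curve) $\cup$ (elliptic tail $E$), the $p$-rank splits as $f = f' + f_E$ with $f_E\in\{0,1\}$; by semicontinuity and the codimension count, $\bar S$ must contain a component where the genus-$(g-1)$ part has $p$-rank $f-1$ with an ordinary tail, OR $p$-rank $f$ with a supersingular tail. In the first case apply induction to get a tree of $g-1$ elliptic curves with $f-1$ ordinary, then attach one more ordinary elliptic curve; in the second case apply induction to get $f$ ordinary out of $g-1$, then attach a supersingular tail.

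The main obstacle is controlling which boundary components $\bar S$ actually meets, and ruling out the possibility that $\bar S$ limits only onto boundary strata that are "too degenerate" (e.g., where the elliptic tail has already specialized so that the $p$-rank jumps in an uncontrolled way, or where $\bar S$ meets only the $\calh_{g_1}\times\calh_{g_2}$-type boundary and not the elliptic-tail boundary). To handle this I would argue as follows: first show that $\bar S$ meets the boundary in a divisor $D$ of $\bar S$, hence $\dim D = \dim S - 1$; then, since the $p$-rank is constant $=f$ on $S$ and semicontinuous, $D$ lies in the $p$-rank $\leq f$ locus of the boundary, but a dimension count forces a component of $D$ to have $p$-rank exactly $f$ and to be dense in a $p$-rank-$f$ stratum of one of the standard boundary divisors. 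The genuinely delicate case is $f=0$ and the inductive attachment of supersingular tails, because one needs the genus-$(g-1)$ stratum $\calh_{g-1}^0$ to limit onto a tree of $g-1$ supersingular elliptic curves, which requires knowing that $\calh_{g-1}^0$ is nonempty and that its closure reaches the "deepest" boundary point — this is exactly where the full strength of the induction hypothesis (including $g-1=1$, where $\calh_1^0$ is the supersingular locus in $\bar\calh_1$, which is nonempty and zero-dimensional) is used, and where one must be careful that the relevant boundary stratum of $\bar\calh_{g-1}$ is itself irreducible or that $\bar S$ meets the correct component of it.
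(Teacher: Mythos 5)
Your inductive outline tracks the paper's high-level strategy (induction on $g$, dimension/purity bounds, degeneration to boundary), but there is a genuine gap in the inductive step for $f \ge 1$, and you have the delicate case exactly backwards. For $\bar\calh_g$, the boundary has two qualitatively different kinds of components: the compact-type strata $\Delta_i$ ($1 \le i \le g-1$), where the curve is a union of two hyperelliptic curves meeting at a Weierstrass point, and the \emph{non}-compact-type strata $\Xi_i$ ($0 \le i \le g-2$), where the stable curve has positive toric rank (irreducible with a self-node for $\Xi_0$, two components meeting in two nodes for $\Xi_i$, $i\ge 1$). Your argument only ever considers the $\Delta_i$-type degenerations (``genus $g-1$ plus elliptic tail'' and ``$\bar\calh_{g_1}\times\bar\calh_{g_2}$''), and implicitly assumes $\bar S$ lands on one of these. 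But the easy completeness argument (no complete positive-dimensional substack of $\calh_g$, or \cite[Lemma 2.6]{FVdG:complete}) only shows that, for $f \ge 1$, $\bar S$ meets $\Delta_0 = \bigcup_i \Xi_i$. There is no a priori reason that $\bar S$ meets any $\Delta_i$ with $i \ge 1$, and for a $\Xi_i$-degeneration the $p$-rank bookkeeping is $f = f_1 + f_2 + 1$ (with a toric contribution), not $f = f' + f_E$ with $f_E \in \{0,1\}$ as you write. Your dimension count does correctly show that $\bar S$ meets the boundary in a stratum of the expected dimension and $p$-rank exactly $f$, but this stratum could lie entirely inside some $\Xi_i$, and your combinatorics then do not apply.

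The paper closes this gap by a two-case argument (Cases (i) and (ii) in the proof of Theorem \ref{thdegentree}): starting from the guaranteed intersection with $\Xi_0$ or $\Xi_i$, it uses the inductive hypothesis plus Lemma \ref{lemlabeled} (the freedom to move the marked points without leaving an irreducible component of $\bar\calh_{g;1}^f$) to exhibit a further degeneration inside $\bar S$ that \emph{does} land in $\Delta_1[\bar\calh_g]^f$, after which Claim \ref{claim} and induction finish. Note that this marked-point-moving is specific to $\bar\calh_{g;r}$ and is nontrivial in the hyperelliptic setting because clutching along $\Delta_i$ must occur at Weierstrass points, whereas the nodes in a $\Xi_i$-curve are not at Weierstrass points -- this is exactly the extra rigidity that makes the hyperelliptic problem harder than the analogous one for $\calm_g^f$. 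Also, you flag $f=0$ as ``the genuinely delicate case,'' but in fact $f=0$ is the \emph{easy} case here: a $p$-rank-$0$ curve has no toric part, so $\bar S$ cannot meet $\Delta_0$ at all and the completeness argument directly forces an intersection with some $\Delta_i$, $i \ge 1$ (Lemma \ref{LhypinterDelta0}(c)). It is the $f \ge 1$ case that requires the extra work outlined above.
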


Our proof does not yield much information on the structure of the tree in Theorem \ref{thdegentree}; 
however, once the tree's structure is fixed, we prove that any choice of labeling of $f$ components as 
ordinary and $g-f$ components as supersingular will occur for some
moduli point in $\bar S$. 

Theorem \ref{thdegentree} yields a powerful technique to analyze $\calh_g^f$.  Using it, we prove the 
following two results.

\begin{theoremend}
Suppose $p$ is an odd prime, $g \geq 4$, and $0 \leq f \leq g$.  
Suppose $C$ is a generic hyperelliptic curve of genus $g$ and $p$-rank $f$ with
Jacobian $X=\pic^0(C)$.
Then $\End(X) \iso \integ$ and thus $X$ is simple. 
\end{theoremend}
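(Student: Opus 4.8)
The conditions $\End(X)=\integ$ and $\End^0(X):=\End(X)\otimes\rat=\rat$ are equivalent — an order in $\rat$ is $\integ$, and $\End(X)=\integ$ leaves no room for a nontrivial idempotent in $\End^0(X)$ — and $\End^0(X)=\rat$ forces $X$ to be simple; so it is enough to prove $\End^0(X)=\rat$. Fix an irreducible component $S$ of $\calh_g^f$, of dimension $g-1+f$, with generic point $\eta$, so that $X=X_\eta=\pic^0(C_\eta)$. For a semisimple $\rat$-algebra $D$ let $Z_D\subseteq S$ be the closed locus of moduli points $c$ with $D\hookrightarrow\End^0(\pic^0 C_c)$. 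Since $\eta$ is the generic point of the irreducible $S$, one has $\eta\in Z_D$ if and only if $Z_D=S$; hence $\End^0(X)=\rat$ is equivalent to the assertion that $Z_D\ne S$ for every $\rat$-algebra $D$ with $\rat\subsetneq D$. I would establish this one $D$ at a time, combining a degeneration constraint from Theorem~\ref{thdegentree}(c) with dimension counts.

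By Theorem~\ref{thdegentree}(c), $\bar S$ contains the moduli point of a tree $T=E_1\cup\cdots\cup E_g$ of elliptic curves, $f$ of them ordinary and $g-f$ supersingular. Choose a discrete valuation ring $R$ and a morphism $\spec R\to\bar S$ carrying the generic point to $\eta$ and the closed point to $[T]$. Since $T$ is of compact type, the relative $\pic^0$ of the resulting family of stable curves over $\spec R$ is an abelian scheme; endomorphisms of its generic fibre $X_\eta$ then extend to the whole scheme by the Néron property, and the extension map is injective (the specialisation map on $\ell$-adic Tate modules, for any $\ell\ne p$, is an isomorphism). This yields a ring embedding $\End^0(X)\hookrightarrow\End^0\!\big(\prod_i E_i\big)$, whose target is a product of matrix algebras over fields of degree at most $2$ (contributed by the ordinary $E_i$, whose endomorphism algebras are $\rat$ or imaginary quadratic) together with a matrix algebra over $B_{p,\infty}$ (contributed by the supersingular $E_i$, which are mutually isogenous). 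Already this rules out many $D$ — for instance real multiplication by a totally real field of degree $>2$ — and it constrains $\End^0(X)$ at $p$.

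What remains is dimension counting, and this is exactly where the hypothesis $g\ge4$ enters. The case $D=\rat\times\rat$ amounts to the statement that $X$ is simple: the locus of hyperelliptic curves of genus $g$ whose Jacobian acquires an abelian subvariety of some dimension $a$ with $0<a<g$ should have dimension strictly less than $g-1+f$ once $g\ge4$ — a parameter count bounding such curves by their lower-dimensional isogeny constituents, with the $p$-rank distributed additively among those constituents. Granting that $X$ is simple, $\End^0(X)$ is a division algebra admitting the embedding above, a faithful $2g$-dimensional $\rat$-module, and a compatible action on the $p$-divisible group of $p$-rank $f$; for each of the finitely many remaining shapes (a quadratic field, a quaternion algebra, $B_{p,\infty}$) the locus in $\calh_g^f$ supporting that structure again has dimension $<g-1+f$ when $g\ge4$, whence $\End^0(X)=\rat$. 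The step I expect to demand the most care is $f=0$: then every $E_i$ is supersingular, so the degeneration constraint weakens to $\End^0(X)\hookrightarrow\mat_g(B_{p,\infty})$ and carries almost no information, and one must instead bound directly the supersingular locus — and the locus of hyperelliptic curves with extra quaternionic or CM multiplication — inside $\calh_g^0$. It is precisely here that $g\ge4$ is essential and cannot be weakened (the theorem already fails at $g=2$, $f=0$).
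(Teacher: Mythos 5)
Your reductions are sound: showing $\End^0(X)=\rat$ suffices, and the specialisation to a tree of elliptic curves via Theorem~\ref{thdegentree}(c) does give an embedding $\End^0(X)\hookrightarrow\End^0(\prod E_i)$ (the paper packages this as Lemma~\ref{lemendextend}, proved via \cite[I.2.7]{faltingschai}). But the proposal then hands the entire weight of the argument to a ``dimension counting'' step that is asserted, not proved, and it is precisely here that the proof would collapse. You claim that for each candidate $D\supsetneq\rat$ the locus $Z_D\subset\calh_g^f$ has dimension $<g-1+f$ once $g\ge 4$. No mechanism for this bound is given, and it is not at all routine: bounding, say, the locus of hyperelliptic curves whose Jacobian contains an abelian subvariety of fixed dimension, or carries quaternionic or real multiplication, inside a fixed $p$-rank stratum is a hard problem (these are close relatives of special-subvariety questions for the hyperelliptic Torelli locus). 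You correctly flag $f=0$ as the hard case --- there the tree is entirely supersingular and the embedding into $\mat_g(B_{p,\infty})$ gives essentially nothing --- but you offer no replacement argument. The paper does not attempt such bounds.

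What actually closes the gap in the paper is a \emph{double} degeneration, not a single one. Corollary~\ref{cornewdegen11} puts $\bar S$ in $\Delta_{1,1}[\bar\calh_g]^f$, producing two distinct one-parameter degenerations of the generic Jacobian (clutch off the left elliptic tail, or the right one). Lemma~\ref{lemendextend} applied to each gives two inclusions of $\e_\ell(\calx_{g,\xi})$ into endomorphism algebras acting on the Tate module $V_\ell(\calx_{g,s})$, one as $\diag_{2g-2}(\rat_\ell)\oplus\mat_2(\rat_\ell)$ and the other as $\mat_2(\rat_\ell)\oplus\diag_{2g-2}(\rat_\ell)$ (using the inductive hypothesis $\e_\ell(\calx_{g-1,\eta_{L/R}})\iso\rat_\ell$). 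Their intersection is $\diag_{2g}(\rat_\ell)$, so $\e_\ell(\calx_{g,\xi})\iso\rat_\ell$. This simultaneous use of two boundary specialisations, forcing the endomorphism algebra to act diagonally, is the missing idea in your proposal; a single specialisation to one tree is structurally too weak to force commutativity, let alone triviality. Finally, even the base case $g=4$, $f=0$ is handled in the paper by a mix of Corollary~\ref{Cnotsupersing}, a Riemann--Hurwitz argument to rule out elliptic quotients, degeneration into $\calh_3^0$, and a divisibility lemma (Lemma~\ref{lemendnotcubic}), all resting on the constrained classification of $\e(\calx_{3,\xi})$ in Lemma~\ref{lemqorcubic} --- not on a dimension bound for $Z_D$.
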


The third theorem requires some notation.  Let $S$ be a connected stack
over $k$, and let $s$ be a geometric point of $S$.  Let $C \ra S$ be a
relative smooth proper curve of genus $g$ over $S$.  Then
$\pic^0(C)[\ell]$ is an \'etale cover of $S$ with geometric fibers
isomorphic to $(\integ/\ell)^{2g}$.  The fundamental group
$\pi_1(S,s)$ acts linearly on the fiber $\pic^0(C)[\ell]_s$, and the
monodromy group $\mono_\ell(C \ra S, s)$ is the image of $\pi_1(S,s)$
in $\aut(\pic^0(C)[\ell]_s)$.  For the third main result, we determine
$\mono_\ell(S):=\mono_\ell(C \to S,s)$, where $S$ is an irreducible
component of $\calh_g^f$ and $C \ra S$ is the universal curve.

\begin{theoremmono}
Suppose $p$ is an odd prime, $g \ge 1$, and $0 \le f \le g$.
Let $S$ be an irreducible component of $\calh_g^f$. 
\begin{enumerate}
\def\theenumi{\roman{enumi}}
\item  If $1 \leq f \leq g$ and $\ell$ is an odd prime distinct from $p$,
then $\mono_\ell(S ) \iso \sp_{2g}(\integ/\ell)$.
\item  If $f=0$ and if $g \ge 4$, 
then $\mono_\ell(S ) \iso \sp_{2g}(\integ/\ell)$ 
for all primes $\ell$  outside a finite set which depends only on $p$.
\end{enumerate}
\end{theoremmono}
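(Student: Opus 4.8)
The plan is to induct on $g$, using Theorem~\ref{thdegentree} to force convenient degenerate curves into $\bar S$ and then comparing $\mono_\ell(S)$ with the monodromy of the corresponding boundary strata, the induction bottoming out at the classically understood cases $g\le3$. First, the Weil pairing makes each fiber $\pic^0(C)[\ell]_s$ a symplectic $\integ/\ell$-module, and since $k=\bar k$ the similitude character is trivial, so $\mono_\ell(S)\subseteq\sp_{2g}(\integ/\ell)$ and only equality is at issue. The endgame is a group-theoretic input: an irreducible subgroup of $\sp_{2g}(\integ/\ell)$ that contains a transvection must equal $\sp_{2g}(\integ/\ell)$ (a Zalesskii--Serezhkin-type classification of transvection-generated linear groups), and the exceptional configurations for small $\ell$ are automatically ruled out once the subgroup is known to contain a full block $\sp_{2g-2}(\integ/\ell)$, which accounts for why part (i) needs no hypothesis on $\ell$. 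So for $1\le f\le g$ it will suffice to locate inside $\mono_\ell(S)$ a block-diagonal $\sp_{2g-2}(\integ/\ell)\times\sp_2(\integ/\ell)$ together with a transvection not respecting that decomposition.

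For the inductive step with $1\le f\le g$: Theorem~\ref{thdegentree} places in $\bar S$ the moduli point of a tree of $g$ elliptic curves with $f$ ordinary, and by smoothing all but one node one finds in $\bar S$ the moduli point $\xi$ of a compact-type curve $C_0=C_1\cup_P E$, where $E$ is an ordinary elliptic curve, $C_1$ is a hyperelliptic curve of genus $g-1$ and $p$-rank $f-1$ in the smooth locus of an irreducible component $S_1$ of $\calh_{g-1}^{f-1}$, and $P$ is a Weierstrass point of each. One must check that this partially smoothed point genuinely lies on $\bar S$ and not merely on the lower $p$-rank locus: the $p$-rank of $C_0$ equals $(f-1)+1=f$, and the combinatorics of the clutching morphisms for hyperelliptic curves, together with irreducibility of the pieces, pins down which component is involved. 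That clutching morphism, roughly $\widetilde{\calh}_{g-1}\times\widetilde{\calh}_1\to\bar\calh_g$ (the tildes recording a marked Weierstrass point, an \'etale datum that does not shrink monodromy), is a local isomorphism onto a boundary divisor $\Delta$ near $\xi$, and along it $\pic^0$ of the universal curve decomposes as $\pic^0(C_1)\oplus\pic^0(E)$. A standard analysis of $\pi_1$ near the smooth divisor $\bar S\cap\Delta$ then shows that $\mono_\ell(S)$ contains the block-diagonal group $\mono_\ell(S_1)\times\mono_\ell(\calh_1^1)$, which by the inductive hypothesis and surjectivity of $\SL_2(\integ_\ell)\to\SL_2(\integ/\ell)$ is all of $\sp_{2g-2}(\integ/\ell)\times\sp_2(\integ/\ell)$.

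The transvection comes from a different degeneration: collide two branch points of a genus-$g$, $p$-rank-$f$ hyperelliptic curve so as to create a single non-separating node, whose normalization has genus $g-1$ and $p$-rank $f-1$, so that the nodal curve itself has $p$-rank $(f-1)+1=f$ and its moduli point again lies on $\bar S$ (this is precisely where $f\ge1$ is used). The Picard--Lefschetz transformation for smoothing this node gives a transvection in $\mono_\ell(S)$ whose vanishing cycle, the two boundary points being unrelated, does not lie in either symplectic block above; hence $\mono_\ell(S)$ acts irreducibly and the group-theoretic input yields $\mono_\ell(S)=\sp_{2g}(\integ/\ell)$. The base cases $g\le3$ are treated separately, using the classical fact that the monodromy of $\calm_2$, $\calm_3$ and of the full hyperelliptic locus $\calh_g$ is the ambient symplectic group, together with prior results of the authors on $p$-rank strata in low genus.

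The case $f=0$ is the main obstacle and is genuinely harder: by lower semicontinuity of the $p$-rank, a $p$-rank-$0$ curve cannot specialize, inside the $p$-rank-$0$ locus, to a curve with a non-separating node, so no Picard--Lefschetz transvection is available and the clean group theory collapses --- which is why $g\ge4$ and a finite set of excluded $\ell$ appear. Here the plan is to combine Theorem~\ref{thdegentree} (now with supersingular elliptic tails) with Theorem~\ref{Tendz} to conclude that for generic $C\in S$ the Jacobian $\pic^0(C)$ is simple with $\End\iso\integ$; a Newton-polygon and Mumford--Tate argument then forces the $\ell$-adic monodromy of $S$ to be Zariski dense in $\sp_{2g}$ over $\rat_\ell$. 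Finally one invokes a reduction-mod-$\ell$ theorem for $\ell$-adically Zariski-dense symplectic monodromy, in the spirit of Nori, Matthews--Vaserstein--Weisfeiler and Larsen--Pink, to deduce $\mono_\ell(S)=\sp_{2g}(\integ/\ell)$ for all $\ell$ outside a finite exceptional set. Beyond this, the recurring delicate point is the comparison of fundamental groups at the boundary --- verifying that $\mono_\ell(S)$ really contains the boundary stratum's monodromy --- and, for $f=0$, making the Zariski-density step work for every $g\ge4$ rather than only for $g$ large enough to iterate the degeneration directly.
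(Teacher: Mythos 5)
Your route in part (i) is genuinely different from the paper's. The paper degenerates to $\Delta_{1,1}$ (a chain of three components of genera $1,g-2,1$) and, via the diagram \eqref{diagclutchbox}/\eqref{diagclutch} from Corollary~\ref{cornewdegen11}, exhibits \emph{two overlapping} copies of $\sp_{2(g-1)}(\integ/\ell)$ inside $\mono_\ell(\til S)$; a group-theoretic lemma from \cite{AP:trielliptic} then gives the whole symplectic group, with no exceptional $\ell$ to worry about. You instead use a single $\Delta_1$-block $\sp_{2g-2}\times\sp_2$ together with a Picard--Lefschetz transvection from a non-separating node. That could in principle work, but as written there is a real gap.

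The gap is your claim that $\bar S$ meets $\Xi_0$ (the irreducible-nodal stratum), justified only by the observation that such a nodal curve has $p$-rank $f$. That does not place its moduli point on $\bar S$. Lemma~\ref{LhypinterDelta0}(a) shows $\bar S$ meets $\Delta_0$ when $f\ge1$, but Lemma~\ref{LhypinterDelta0}(b) allows the intersection to lie entirely in $\Xi_i$ for some $1\le i\le g-2$ (two nodes joining two components), and the $\Xi_i$ are distinct irreducible divisors whose closures do not contain $\Xi_0$. The paper's proof of Theorem~\ref{thdegentree} carefully treats Cases (i) and (ii) precisely because one cannot choose which $\Xi_i$ occurs. (Your $\Xi_0$-point could instead be reached by degenerating the elliptic tail in the $\Delta_1$ chain to a nodal cubic, producing a non-separating node in the closure of the $\Delta_1$ component you already have; but that is not the argument you gave.) Separately, the stated group-theoretic principle --- ``an irreducible subgroup of $\sp_{2g}(\integ/\ell)$ containing a transvection is all of $\sp_{2g}(\integ/\ell)$'' --- is false as stated; the relevant classifications (Zalesskii--Serezhkin and relatives) concern subgroups \emph{generated} by their transvections, and even then one must exclude sub\-field, orthogonal and symmetric-group cases. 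The presence of the full $\sp_{2g-2}\times\sp_2$ block does rule these out, but that needs to be said, and it is exactly the kind of verification the paper avoids by using two overlapping $\sp_{2(g-1)}$'s.

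For part (ii) you also overstate the logic: $\End(\calx_{g,\xi})\iso\integ$ does not by itself force $\mono_{\rat_\ell}(S)\iso\sp_{2g,\rat_\ell}$; one must still rule out irreducible semisimple groups smaller than $\sp_{2g}$ acting on a $2g$-dimensional symplectic space (Mumford-type and the like). The paper does this only at $g=4$ (Proposition~\ref{propmonoh40}: either $\sp_8$ or Mumford type, ruled out by a dimension count via $\Delta_{1,1}$), then propagates upward by the same two-block degeneration. Your ``Newton-polygon and Mumford--Tate argument'' for all $g\ge4$ at once is not substantiated and would require its own Lie-theoretic analysis. The appeal to $\End\iso\integ$ plus a Larsen--Pink/MVW-type reduction is the right spirit, but the paper uses $\End\iso\integ$ only to leverage Theorem~\ref{Tendz} in the $g=4$ base case.
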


We also prove that the $\ell$-adic monodromy group is 
$\sp_{2g}(\integ_\ell)$ in the situation of Theorem \ref{Tprankhypermono}/\ref{Tprankzerohypermono}.
(Note that the case of ordinary hyperelliptic curves, i.e., when $f=g$, follows directly from previous work, see
J.K.Yu [unpublished], \cite[Thm.\ 3.4]{AP:trielliptic}, or \cite[Thm.\ 5.1]{hall06}.)  
In addition, we determine  the $p$-adic
monodromy group of components of $\calh_g^f$ when $f \ge 1$ (Proposition \ref{proppadic}). 

This paper is a natural generalization of our paper
\cite{AP:monoprank}, which is about $\calm^f_g$, the $p$-rank strata
of the moduli space of curves.  The two papers share essential
similarities, but there are several new phenomena for hyperelliptic
$p$-rank strata which increase the difficulty of the proofs and
influence the final results in this paper.  First, the boundary
component $\Delta_0$ is more complicated for $\bar \calh_g$ than for
$\bar \calm_g$.  Second, for a singular hyperelliptic curve which is
formed as a chain of two hyperelliptic curves of smaller genera, the
set of possibilities for the location of the ordinary double point is
discrete.  These two facts play a key role in the degeneration
arguments found in the proof of Theorem \ref{thdegentree}.  
The third issue, which arises in a base case when $f=0$,
is that the stratum $\calh_3^0$ is not nearly as well understood as $\calm_3^0$.

The second and third main results of the paper 
rely on Theorem \ref{thdegentree} because the proofs use degeneration in order to proceed by induction on the genus.
For the inductive step, Theorem \ref{thdegentree} implies that 
the closure of every component $S$ of $\calh_g^f$ in $\bar
\calh_g$ intersects the stratum $\Delta_{1,1}$ of the boundary of $\calh_g$ (Corollary \ref{cornewdegen11}).  
This is used in the proof of Theorem \ref{Tendz} to show that the endomorphism ring of $\pic^0(C)$ 
acts diagonally on the Tate module.  
It is also used in the proof of Theorem \ref{Tprankhypermono}/\ref{Tprankzerohypermono} 
to show that the monodromy group of $S$ contains two non-identical
copies of $\sp_{2g-2}(\integ/\ell)$.

There are two base cases needed in this paper.  The first, when $g=2$
and $f \geq 1$, uses facts about $\calh_2^f$ from a special case of
\cite[Prop.\ 4.4]{chailadic}.  The second, when $g=3$ and $f=0$, we
found somewhat intractable.  For this reason, we employ a novel
analysis of endomorphism algebras of generic Jacobians of small genus
with $p$-rank zero.  Applying Theorem \ref{thdegentree} and
\cite[Thm.\ 1.12]{oorthess} lets us constrain the possibilities
(Lemma \ref{lemqorcubic}) for the endomorphism algebra of the Jacobian 
of a generic hyperelliptic curve of genus $3$ with $p$-rank $0$.
This, combined with an understanding of abelian varieties of Mumford
type, yields sufficient leverage to understand the case when $g=4$ and
$f=0$.  In particular, this allows us to determine the endomorphism
ring of the Jacobian of a generic hyperelliptic curve of genus $4$ and
$p$-rank $0$ and to compute the monodromy group of components of
$\calh_4^0$.

This paper also contains multiple applications about hyperelliptic curves of given genus and $p$-rank. 
For example, results on Newton polygons of (hyperelliptic) curves for arbitrary $g$ and $p$ are notoriously elusive.  
As a consequence of Theorem \ref{thdegentree}, we give an application
about Newton polygons of hyperelliptic curves.   
The application relies on and generalizes \cite[Thm.\ 1.12]{oorthess}, which is the case $g=3$.

\begin{cornotss}
Suppose $p$ is an odd prime and $g \geq 3$.  Let $S$ be an irreducible component 
of $\calh_g^0$, the $p$-rank $0$ stratum in $\calh_g$.  Then $S$ contains the moduli point
of a curve whose Jacobian is not supersingular.
\end{cornotss}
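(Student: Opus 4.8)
The plan is to prove the statement by induction on $g$. The base case $g=3$ is precisely \cite[Thm.\ 1.12]{oorthess}. For the inductive step, fix $g \geq 4$, assume the statement in genus $g-1$, and let $S$ be an irreducible component of $\calh_g^0$ with closure $\bar S$ in $\bar\calh_g$; the strategy is to use Theorem \ref{thdegentree} to push $\bar S$ onto a boundary divisor, to isolate from the resulting intersection a piece that dominates a component of $\calh_{g-1}^0$, and then to invoke the inductive hypothesis there. By Theorem \ref{thdegentree}, $\bar S$ contains the moduli point $z$ of a tree $T$ of $g$ elliptic curves, all supersingular because $f=0$. Picking a leaf $E$ of $T$ and writing $T = E \cup_P T'$, with $T'$ a tree of $g-1$ supersingular elliptic curves meeting $E$ at the single node $P$, one sees that $z$ lies in the closure of the boundary stratum $\Delta_1 \subset \bar\calh_g$ whose generic point is a one-nodal hyperelliptic curve $X_1 \cup_P X_2$ with $X_1$ elliptic, $X_2$ of genus $g-1$, and $P$ a Weierstrass point of $X_2$, since $T$ is a degeneration of $E \cup_P X$ for $X$ any smoothing of $T'$.

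Now $\overline{\Delta_1}$ is a divisor in $\bar\calh_g$, while $\bar S$ has dimension $\dim \calh_g^0 = g-1$ and is not contained in $\overline{\Delta_1}$ (as $S$ is dense in $\bar S$); hence $\bar S \cap \overline{\Delta_1}$ is non-empty (it contains $z$) and pure of dimension $g-2$. On the other hand, every point of $\bar S$ has $p$-rank $0$, so this intersection lies in the $p$-rank-$0$ locus of $\overline{\Delta_1}$, whose top-dimensional part is finite over $\{\text{supersingular elliptic curves}\} \times \calh_{g-1}^0$ and hence also has dimension $g-2$. Therefore some irreducible component $W$ of $\bar S \cap \overline{\Delta_1}$ meets the open stratum $\Delta_1^0$ and, under the projection, dominates $\overline{S'}$ for some irreducible component $S'$ of $\calh_{g-1}^0$; a generic point of $W$ is a curve $E_0 \cup_P Y$ with $E_0$ a fixed supersingular elliptic curve and $[Y] \in S'$. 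By the inductive hypothesis $S'$ contains the moduli point of a smooth hyperelliptic curve $Y_0$ of genus $g-1$ and $p$-rank $0$ with $\jac(Y_0)$ not supersingular, so the corresponding point $[E_0 \cup_P Y_0]$ of $W \subseteq \bar S$ has (generalized) Jacobian $E_0 \times \jac(Y_0)$, which is not supersingular. Finally, since every $p$-rank-$0$ curve is of compact type, supersingularity of the Jacobian is a closed condition on $\overline{\calh_g^0}$; hence the non-supersingular locus is a non-empty open subset of $\bar S$ and so meets the dense open $S$, which proves the corollary.

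I expect the main obstacle to be the dimension bookkeeping that produces the component $W$: one needs $\calh_g^0$ and $\calh_{g-1}^0$ to be equidimensional of the expected dimensions, and — the genuinely delicate part — one must control the $p$-rank-$0$ locus inside the \emph{closure} $\overline{\Delta_1}$, ruling out that the $(g-2)$-dimensional component $W$ is hidden in deeper boundary strata rather than meeting $\Delta_1^0$ and dominating a component of $\calh_{g-1}^0$. This is exactly the sort of degeneration analysis that underlies Theorem \ref{thdegentree}, so the tools should be available, but it is where the work lies. A secondary and routine point is passing from a non-supersingular \emph{boundary} point of $\bar S$ down to a non-supersingular point of the open stratum $S$, for which one only needs the supersingular locus to be closed in the compactified moduli stack.
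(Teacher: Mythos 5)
Your proof is correct and takes essentially the same route as the paper: induction on $g$ with base case $g=3$ from Oort, degeneration of $\bar S$ to $\Delta_1[\bar\calh_g]^0$ (the paper packages your dimension-count and intersection-theoretic bookkeeping as Lemma \ref{lemdimh}, Lemma \ref{Lintersecttheory}, and Corollary \ref{cornewdegen1}), followed by Newton polygon semicontinuity to pull non-supersingularity from the boundary back to the generic point of $S$. The only cosmetic difference is that you phrase the final step as ``the supersingular locus is closed'' while the paper tracks the specific slope $\lambda < 1/2$ inherited from the genus $g-1$ factor; both are the same invocation of \cite[Thm.\ 2.3.1]{katzsf}.
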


We also give applications about class groups and zeta functions
of hyperelliptic curves of given genus and $p$-rank over finite fields.
These build upon \cite[Thm.\ 9.7.13]{katzsarnak} and \cite[Thm.\ 6.1]{kowalskisieve}. 

\paragraph{{\bf Applications:}} 
Let $\ff$ be a finite field of characteristic $p$.  Under the
hypotheses of Theorem \ref{Tprankhypermono}/\ref{Tprankzerohypermono}:
\begin{enumerate}
\def\theenumi{\roman{enumi}}
\item
there is a hyperelliptic $\overline{\ff}$-curve $C$ of genus $g$ and $p$-rank $f$ 
whose Jacobian is absolutely simple
(Application \ref{appabssimp});
\item
if $|\ff|\equiv 1 \bmod \ell$, about $\ell/(\ell^2-1)$ of the hyperelliptic 
$\ff$-curves of genus $g$ and $p$-rank $f$ 
have a point of order $\ell$ on their Jacobian (Application \ref{appclass});
\item
for most hyperelliptic $\ff$-curves $C$ of genus $g$ and $p$-rank $f$, the
splitting field of the numerator of the zeta function of $C$ has
degree $2^g g!$ over $\rat$ (Application \ref{appzeta}). 
\end{enumerate}

Here is an outline of the paper.  Notation and background are found in
Section \ref{S2}.  Section \ref{Sboundresults} contains the results
about the boundary of the $p$-rank $f$ stratum $\calh_g^f$ and the
application to Newton polygons.  This section ends with some open
questions about the geometry of $\calh_g^f$.  For example, the number
of irreducible components of $\calh_g^f$ is known only in special
cases.  The results about endomorphism rings are in Section \ref{Sendresults}
while the results about monodromy and the applications to absolutely simple Jacobians,
class groups and zeta functions are in Section \ref{Smono}.

We thank the referee of \cite{AP:monoprank} for
suggestions which we used in this paper and thank F. Oort for helpful comments.

\section{Background} \label{S2}

Let $k$ be an algebraically closed field of characteristic $p \geq 3$.
With the exception of Section \ref{subsecapp}, where we work over a
finite field, all objects are defined on the category of $k$-schemes.
Let $\ell$ be an odd prime distinct from $p$.  We fix an 
isomorphism $\mmu_\ell \iso \integ/\ell$.  

\subsection{Moduli spaces}
\label{subsecmoduli}
For a natural number $g$ consider the following well-known
categories, each of which is fibered 
in groupoids over the category of $k$-schemes in the \'etale topology:
\begin{itemize}
\item{$\cala_g$} principally polarized abelian schemes of dimension $g$;
\item{$\calm_g$} smooth connected proper relative curves of genus $g$;
\item{$\calh_g$} smooth connected proper relative hyperelliptic curves of genus $g$;
\item{$\bar\calm_g$} stable curves of genus $g$. 
\end{itemize}

Each of these is a smooth Deligne-Mumford stack, and $\bar\calm_g$ is proper 
\cite[Thm.\ 5.2]{delignemumford}.   
There is a natural inclusion $\calh_g \ra \calm_g$;
let $\bar \calh_g$ be the closure of $\calh_g$ in $\bar \calm_g$.
Thus there are the following categories:
\begin{itemize}
\item{$\bar\calh_g$} stable hyperelliptic curves of genus $g$;
\item{$\til \calh_g$} stable hyperelliptic curves of genus $g$, 
along with a labeling of the smooth ramification locus (see \cite[Section 2.2]{AP:trielliptic}).
\end{itemize}
Again, these are smooth proper Deligne-Mumford stacks \cite[Thm.\ 3.2]{ekedahlhurwitz},
see e.g. \cite[Lemma 2.2]{AP:trielliptic} for $\til \calh_g$.   The forgetful map $\varpi_g:\til \calh_g \ra \bar\calh_g$ is
\'etale and Galois, with Galois group $\sym(2g+2)$.
If $S\subset \bar\calh_g$, 
let $\bar S$ be the closure of $S$ in $\bar\calh_g$.  Let
$\calc_g$ be the universal
curve over $\bar\calh_g$, i.e., the pullback to $\bar\calh_g$ of the
universal curve over $\bar\calm_g$ (cf. \cite[Thm.\ 5.2]{delignemumford}).

For a natural number $r$, let $\bar\calm_{g;r}$ be the Deligne-Mumford
stack of stable curves of genus $g$ with $r$ marked points.  Let
$\bar\calh_{g;r} =\bar\calh_g\cross_{\bar\calm_g} \bar\calm_{g;r}$ and
let $\til \calh_{g;r}= \til \calh_g \cross_{\bar\calm_g}
\bar\calm_{g;r}$.
The forgetful map $\phi_{g;r}:\bar\calh_{g;r} \ra \bar\calh_g$ is proper, flat and
surjective with connected fibers, and thus is a fibration. 
Let $\calh_{g;r} = \bar\calh_{g;r}\cross_{\bar\calh_g} \calh_g$.

\subsection{Stratifications}\label{subsecstrat}

Let $X$ be a principally polarized abelian variety of dimension $g$
defined over $k$.  There
is an integer $f(X)$ between $0$ and $g$, called the $p$-rank of $X$,
such that $X[p](k) \iso (\integ/p)^{f(X)}$.   
More generally, if $X/k$ is a semiabelian variety,
then its $p$-rank is $\dim_{\ff_p}\hom(\mmu_p,X)$.  The $p$-rank of a
curve is that of its Jacobian.  If $X \ra S$ is a
semiabelian scheme over a Deligne-Mumford stack, then there is a
stratification $S = \cup S^f$ by locally closed substacks such that $s
\in S^f(k)$ if and only if $f(X_s)=f$ 
(this follows from \cite[Thm.\ 2.3.1]{katzsf}, see, e.g., \cite[Lemma
2.1]{AP:monoprank}).  Thus, $\calh_g^f$ is the locus in $\calh_g$
parametrizing hyperelliptic curves of $p$-rank $f$.
Every component of $\calh_g^f$ has dimension $g-1+f$ \cite[Thm.\
1]{GP:05}.

Here is the definition of the Newton polygon of an abelian variety; 
see \cite[Chap.\ IV]{demazure} for details.  
The isogeny class of a $p$-divisible group $G/k$ is
determined by $\nu(G)$, a lower-convex polygon in $\real^2$ connecting
$(0,0)$ to $(\text{height}(G), \dim(G))$ with slopes $\lambda \in \rat \cap [0,1]$ and integral breakpoints.   
If $X/k$ is an abelian variety, its Newton polygon is that of its
$p$-divisible group $X[p^\infty]$.  The Newton polygon is a finer
invariant than the $p$-rank; 
indeed, the $p$-rank of $X$ is exactly the length of the slope $0$ part of the Newton polygon.
For example, $X$ is ordinary exactly when its Newton polygon only has slopes $0$ and $1$. 
By definition, $X$ is supersingular if its Newton polygon only
has slope $1/2$.

\subsection{The boundary of the moduli space of hyperelliptic curves}

The boundary of $\bar\calh_g$ is $\del\bar\calh_g = \bar\calh_g -
\calh_g$.  
The description here of $\del\bar\calh_g$ follows
\cite[Sec.\ 4(b)]{cornalbaharris} closely.  In fact, while
\cite{cornalbaharris} is written for the base field ${\mathbb C}$, the
description of $\calh_g$ and $\bar\calh_g$ is valid in any
characteristic \cite{yamaki04}.  Briefly, the irreducible components
of $\del \bar\calh_g$ come from restriction of the
components of the boundary of $\bar\calm_g$, except that $\Delta_0$
breaks into several components.   The informal discussion in this section is supplemented
with precise definitions in Section \ref{Sclutch}.

If $g \geq 2$, the boundary $\del \bar \calh_g$ is the union of
components $\Delta_i = \Delta_{i}[\bar\calh_g]$ for $1 \leq i \leq g-1$ and $\Xi_i = \Xi_i[\bar \calh_g]$ 
for $0 \leq i \leq g-2$ by \cite[p.410]{yamaki04}.  
Here $\Delta_i$ and $\Delta_{g-i}$ are the same substack of $\bar\calh_g$
and $\Xi_i$ and $\Xi_{g-i-1}$ are the same substack of $\bar \calh_g$.
Each $\Delta_i$ and $\Xi_i$ is an irreducible divisor in $\bar\calh_g$.

For $1 \leq i \leq g-1$, if $\eta$ is the generic point of $\Delta_i$, then the curve $\calc_{g,\eta}$
is a chain of two smooth irreducible hyperelliptic curves $Y_1$ and $Y_2$, of genera $i$ and $g-i$, 
intersecting in one ordinary double point $P$.  
The hyperelliptic involution $\iota$ stabilizes each of $Y_1$ and $Y_2$.
The point $P$ is a ramification point for the restriction of $\iota$
to each of $Y_1$ and $Y_2$ but is not part of the smooth ramification locus.  

If $\eta$ is the generic point of $\Xi_0$, then the curve $\calc_{g,\eta}$
is an irreducible hyperelliptic curve self-intersecting in an ordinary
double point $P$. 
The normalization of $\calc_{g,\eta}$ is a smooth hyperelliptic curve $Y_1$ of genus $g-1$
and the inverse image of $P$ in the normalization consists of an orbit under the hyperelliptic involution.

For $1 \leq i \leq g-2$, if $\eta$ is the generic point of $\Xi_i$, then the curve
$\calc_{g,\eta}$ has two components $Y_1$ and $Y_2$, which are smooth irreducible hyperelliptic 
curves, of genera $i$ and $g-1-i$, intersecting in two ordinary double points $P$ and $Q$.
The hyperelliptic involution $\iota$ stabilizes each of $Y_1$ and $Y_2$.
The points $P$ and $Q$ form an orbit of the restriction of $\iota$ to
each of $Y_1$ and $Y_2$.  
  
One can associate to a stable curve $C$ its dual graph, in which the
vertices are in 
bijection with the irreducible components of $C$ and in which 
there is an edge between two vertices exactly when the corresponding
components intersect.  A component of $C$ is called terminal if
the corresponding vertex is a leaf of the dual graph.  A curve is
called a tree if its dual graph is a tree.  
A curve is called a tree
of elliptic curves if it is a tree and if each of its irreducible
components is an elliptic curve.

A stable curve is a tree if and only if its Picard variety is
represented by an abelian scheme; such a  curve is
also said to be of compact type.
Let $\Delta_0=\Delta_0[\bar \calh_g]$ be the union of $\Xi_i$ for $0
\leq i \leq \floor{(g-1)/2}$.  The moduli points of
stable hyperelliptic curves which are not of compact type are exactly
the points of $\Delta_0[\bar \calh_g]$.

\subsection{Clutching maps} \label{Sclutch}

Recall from \cite{knudsen2} that there are 
three types of clutching maps for positive integers $g_1$ and $g_2$:
\[
\xymatrix{
\kappa_{g_1,g_2}:\til\calh_{g_1}\cross \til \calh_{g_2} \ar[r]^-{\til \kappa_{g_1,g_2}} & \til
\calh_{g_1+g_2} \ar[r]^{\varpi_{g_1+g_2}} & \bar\calh_{g_1+g_2}; \\
\kappa_{g_1}:\bar\calh_{g_1;1} \ar[r] & \bar\calh_{g_1+1};\\
\lambda_{g_1,g_2}: \bar\calh_{g_1;1}\cross \bar\calh_{g_2;1} \ar[r] &\bar\calh_{g_1+g_2+1}.
}
\]

Each of these clutching maps is the restriction of a finite, unramified morphism
between moduli spaces of labeled curves \cite[Cor.\ 3.9]{knudsen2}.
One defines: $$\Delta_{g_1}[\bar \calh_{g_1+g_2}] = {\rm Im}(\kappa_{g_1,g_2}); \ \ 
\Xi_0[\bar \calh_{g_1+1}] = {\rm Im}(\kappa_{g_1}); \ \ {\rm and} \ \  
\Xi_{g_1}[\bar \calh_{g_1+g_2+1}]= {\rm Im}(\lambda_{g_1,g_2}).$$ 
If $S$ is a stack equipped with a map $S \ra \bar\calh_g$, let
$\Delta_i[S]$ denote $S\cross_{\bar\calh_g} \Delta_i[\bar\calh_g]$. 
In particular, $\Delta_i[\til\calh_g] =
\til\calh_g\cross_{\bar\calh_g} \Delta_i$.  
Also define $\Delta_i[\bar\calh_g]^f:=(\Delta_i[\bar\calh_g])^f$. 
Similar conventions are employed for $\Xi_i$.

These clutching maps can be described in terms of their images on $T$-points for
an arbitrary $k$-scheme $T$.

\subsubsection{Information about $\kappa_{g_1, g_2}$}

For $i =1, 2$, suppose $s_i \in \til \calh_{g_i}(T)$ is the moduli
point of a hyperelliptic curve $Y_i$ with labeled smooth
ramification locus.
Then $\til\kappa_{g_1,g_2}(s_1,s_2)$ is the moduli
point of the labeled $T$-curve $Y$ where $Y$ has components $Y_1$ and
$Y_2$, and where the last ramification point of $Y_1$ and the first
ramification point of $Y_2$ are identified in an ordinary double
point.  This nodal section is dropped from the labeling of the
ramification points; the remaining smooth ramification sections of $Y_1$
maintain labels $\st{1, \ldots, 2g_1+1}$ and the remaining ramification sections
of $Y_2$ are relabeled $\st{ 2g_1+2, \ldots, 2(g_1+g_2)+2}$.  There is a unique hyperelliptic involution on
$Y$ which restricts to the hyperelliptic involution on $Y_1$ and
on $Y_2$. Moreover, $\kappa_{g_1,g_2}(s_1,s_2)$ is the moduli point of
the (unlabeled) hyperelliptic curve $Y$.

By \cite[Ex.\ 9.2.8]{blr},
\begin{align}
\label{eqblr}
\pic^0(Y) \iso \pic^0(Y_1) \cross \pic^0(Y_2).
\intertext{Then the $p$-rank of $Y$ is}
\label{eqblrprank}
f(Y) = f(Y_1)+f(Y_2).
\end{align}

\subsubsection{Information about $\kappa_{g_1}$}

Suppose $s_1 \in \bar\calh_{g_1;1}(T)$ is the moduli point of $(Y_1;P)$, a
hyperelliptic curve with a marked section.   Then $\kappa_{g_1}(s_1)$ is the moduli point of the
$T$-curve $Y$, where $Y$ is the stable model of the curve obtained by identifying  
the sections $P$ and $\iota(P)$ in an ordinary double point $P'$.  The hyperelliptic involution 
on $Y_1$ descends to a unique hyperelliptic involution on $Y$. 

By \cite[Ex.\ 9.2.8]{blr}, $\pic^0(Y)$ is an extension
\begin{equation}
\label{eqblr0}
\xymatrix{
0 \ar[r] & Z \ar[r] & \pic^0(Y)  \ar[r] & \pic^0(Y_1) \ar[r] & 0},
\end{equation}
where $Z$ is a one-dimensional torus.  
In particular, the toric rank of $\pic^0(Y)$ is one greater than that of $\pic^0(Y_1)$, 
and their maximal projective quotients are isomorphic, so that 
\begin{equation}
\label{eqblrprank0}
f(Y) = f(Y_1)+1.
\end{equation}

\subsubsection{Information about $\lambda_{g_1, g_2}$} \label{Slambda}

For $i = 1,2$, suppose $s_i \in \bar\calh_{g_i;1}(T)$ is the moduli
point of $(Y_i; P_i)$, a hyperelliptic curve with a marked section.
Then $\lambda_{g_1,g_2}(s_1,s_2)$ is the moduli point of the stable
model $Y$ of the $T$-curve obtained by identifying $P_1$ and $P_2$ in an
ordinary double point $P$ and by identifying $\iota(P_1)$ and
$\iota(P_2)$ in an ordinary double point $Q$.
There is a unique hyperelliptic involution on $Y$ that restricts to the hyperelliptic 
involution on $Y_1$ and on $Y_2$.

By \cite[Ex.\ 9.2.8]{blr}, $\pic^0(Y)$ is an extension
\begin{equation}
\label{eqblr0b}
\xymatrix{
0 \ar[r] & Z \ar[r] & \pic^0(Y)  \ar[r] & \pic^0(Y_1) \cross \pic^0(Y_2) \ar[r] & 0},
\end{equation}
where $Z$ is a one-dimensional torus.  In particular,
\begin{equation}
\label{eqblrpranklambda}
f(D) = f(Y_1)+f(Y_2)+1.
\end{equation}

For later use, here is a description of the stable model $Y$ when
$P_1$ is a ramification point of $Y_1$, but $P_2$ is not a
ramification point of $Y_2$.  Then $Y$ consists of three components,
namely the strict transforms of $Y_1$ and $Y_2$ and an exceptional
component $W$ which is a projective line.  Also $Y_1$ intersects $W$
in an ordinary double point and $Y_2$ intersects $W$ in two other
points, which are also ordinary double points.

\subsubsection{Clutching along trees}

The definition of $\til \kappa_{g_1,g_2}$ above relies on an arbitrary,
although convenient, choice of sections along which to glue, and
a labeling of the smooth ramification locus of the resulting curve.
By considering morphisms of the form $\gamma_{g_1+g_2} \comp \til
\kappa_{g_1,g_2} \comp (\gamma_{g_1} \cross \gamma_{g_2})$, where
$\gamma_{g_1+g_2} 
\in \sym(2(g_1+g_2)+2)$ and $\gamma_{g_1} \in \sym(2g_1+2)$ and
$\gamma_{g_2} \in \sym(2g_2+2)$, it is possible to clutch along
arbitrary sections, with complete control over the subsequent labeling.

This can be used to describe configurations of curves of compact type, as
follows.  A clutching tree is a finite tree $\Lambda$ along with a
choice of natural number $g_v$ for each vertex $v \in \Lambda$ such
that $\deg(v) 
\le 2g_v+2$.   Such a tree is called a clutching tree of elliptic
curves if $g_v = 1$ for all $v \in \Lambda$.
Let $\abs \Lambda$ be the number of vertices in
$\Lambda$, and let $g(\Lambda) = \sum_{v\in \Lambda} g_v$.  

Using a product of the clutching maps defined above, one
can define a morphism
\[
\xymatrix{
\kappa_\Lambda: \cross_{v\in V} \til \calh_{g_v} \ar[r]^-{\til
\kappa_{\Lambda}} & \til \calh_{g(\Lambda)} \ar[r]^{\varpi_{g(\Lambda)}}
& \bar\calh_{g(\Lambda)}.
}
\]
Let $\Delta_\Lambda$ be the image of $\kappa_\Lambda$.   If $\eta$ is
the generic point of $\Delta_\Lambda$, then $\calc_{g,\eta}$ is a
hyperelliptic curve  of compact type with dual graph
isomorphic to $\Lambda$, such that the irreducible component of
$\calc_{g,\eta}$ corresponding to the vertex $v$ has genus $g_v$.

Suppose $v_1$ and $v_2$ are adjacent vertices in a clutching tree
$\Lambda$.   Consider the tree $\Lambda'$ obtained by identifying
$v_1$ and $v_2$ in a new vertex, $v$, which is adjacent to all
neighbors of $v_1$ or $v_2$ in $\Lambda$, with label $g_v =
g_{v_1}+g_{v_2}$.  Then $\kappa_{\Lambda}$ factors through
$\kappa_{\Lambda'}$ and $\Delta_\Lambda \subset \Delta_{\Lambda'}$.  
A tree $\Lambda$ refines a tree $\Lambda'$ if $\Lambda'$ can be obtained from
$\Lambda$ through iterations of this construction.

\subsubsection{One more clutching map}
In the special case where $\Lambda$ is a tree on three vertices,
where the leaves $v_1$ and $v_3$ have $g_{v_1} = g_{v_3} = 1$, and
where $g_{v_2} = g-2$, one obtains the following diagram:

\begin{equation}
\label{diagclutchbox}
\xymatrix{
\til\calh_{1} \cross \til \calh_{g-2} \cross \til \calh_{1} 
\ar[d] \ar[r] \ar[dr]^{\til \kappa_{1,g-2,1}}
 & \til\calh_{g-1} \cross \til \calh_{1} \ar[d]
\\
\til\calh_{1}\cross \til\calh_{g-1} \ar[r] & \til \calh_g.
}
\end{equation}

Let $\Delta_{1,1}[\bar \calh_g]$ be the image of $\kappa_{1,g-2,1} = \varpi_g \comp \til \kappa_{1,g-2,1} $;
it is an irreducible component of the self-intersection locus of
$\Delta_1[\bar \calh_g]$.  If $\eta$ is the generic point of
$\Delta_{1,1}[\bar\calh_g]$, then 
the curve $\calc_{g,\eta}$ is a chain of three smooth irreducible 
hyperelliptic curves $Y_1$, $Y_2$, $Y_3$ with $g_{Y_1}=g_{Y_3}=1$ and
$g_{Y_2}=g-2$.  For $i \in \st{1,3}$, the curves $Y_i$ and $Y_2$
intersect in a point $P_i$ which is an ordinary double point.

\section{Boundary of the hyperelliptic $p$-rank strata}
\label{Sboundresults} 

\subsection{Preliminary intersection results}
\label{Sdegen}
Suppose $p\ge 3$, and $g \ge 1$ and $0 \le f \le g$.  The $p$-rank strata of the boundary 
of $\bar
\calh_g$ are easy to describe using the clutching maps.  First, if $1
\leq i \leq g-1$, then \eqref{eqblrprank}
implies that $\Delta_i[\bar \calh_g]^f$ is the union of the images of
$\til \calh_{i}^{f_1} \cross \til \calh_{g-i}^{f_2}$ under $\kappa_{i,
  g-i}$ as $(f_1,f_2)$ ranges over all pairs such that
\begin{equation}
\label{f1f2conditions}
0 \leq f_1 \leq i,\ 0 \leq f_2 \leq g-i \text{ and } f_1+f_2=f.
\end{equation}
Second, if $f \geq 1$, then
$\Xi_0[\bar \calh_g]^f$ is the image of $\bar \calh_{g-1;1}^{f-1}$
under $\kappa_{g-1}$ by \eqref{eqblrprank0}.  
Third, if $f \geq 1$ and $1 \leq i \leq g-2$, then \eqref{eqblrpranklambda}
implies that $\Xi_i[\bar \calh_g]^f$ is the image of 
$\bar \calh_{i;1}^{f_1} \cross \bar \calh_{g-1-i;1}^{f_2}$ under $\lambda_{i, g-1-i}$
as $(f_1,f_2)$ ranges over all pairs such that
\begin{equation}
\label{Xif1f2conditions}
0 \leq f_1 \leq i,\ 0 \leq f_2 \leq g-1-i \text{ and } f_1+f_2=f-1.
\end{equation} 

\begin{lemma}
\label{lemdimh}  Suppose $g \geq 2$ and $0 \le f \le g$.  
\begin{alphabetize}
\item If $1 \le i \le g-1$, then every component of
$\Delta_i[\bar\calh_g]^f$ and of $\Delta_i[\til\calh_g]^f$ has dimension $g-2+f$.
\item If $f \geq 1$ and $0 \leq i \leq g-2$, then 
every component of $\Xi_i[\bar\calh_g]^f$ and of
$\Xi_i[\til\calh_g]^f$ has dimension $g-2+f$.
\end{alphabetize}
\end{lemma}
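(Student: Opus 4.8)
The plan is to compute the dimension of each boundary $p$-rank stratum directly from the description of the clutching maps given just before the lemma, using the known dimension formula for hyperelliptic $p$-rank strata, namely that every component of $\calh_g^f$ has dimension $g-1+f$ (cited from \cite{GP:05}). Since $\varpi_g$ is finite \'etale, it suffices to treat either $\bar\calh_g$ or $\til\calh_g$; dimensions are preserved under $\varpi_g$, so part (a) and (b) for the two stacks are equivalent statements and I will prove them simultaneously.

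For part (a): by the discussion preceding the lemma, $\Delta_i[\bar\calh_g]^f$ is the union of the images of $\til\calh_i^{f_1}\cross\til\calh_{g-i}^{f_2}$ under $\kappa_{i,g-i}$ over all $(f_1,f_2)$ with $0\le f_1\le i$, $0\le f_2\le g-i$, $f_1+f_2=f$. The clutching map $\kappa_{i,g-i}$ is finite and unramified (it is the restriction of a finite unramified morphism, by \cite[Cor.\ 3.9]{knudsen2} together with the \'etale Galois map $\varpi$), hence quasi-finite, so it preserves dimension. Therefore each such image has dimension $\dim\til\calh_i^{f_1}+\dim\til\calh_{g-i}^{f_2}=(i-1+f_1)+(g-i-1+f_2)=g-2+(f_1+f_2)=g-2+f$, independently of the choice of $(f_1,f_2)$. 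Since at least one admissible pair exists (e.g. take $f_1=\min(f,i)$, $f_2=f-f_1$, which satisfies the constraints because $f\le g$), the union is nonempty and every component has dimension $g-2+f$. One small point to check is that every irreducible component of $\Delta_i[\bar\calh_g]^f$ actually lies in the image of one of these products — but this is exactly the content of the displayed description that $\Delta_i[\bar\calh_g]^f$ equals that union of images, so it is immediate.

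For part (b): I split into the two cases from the preceding discussion. If $i=0$ and $f\ge 1$, then $\Xi_0[\bar\calh_g]^f$ is the image under the finite unramified map $\kappa_{g-1}$ of $\bar\calh_{g-1;1}^{f-1}$. Here $\bar\calh_{g-1;1}\to\bar\calh_{g-1}$ is the fibration $\phi_{g-1;1}$ with one-dimensional fibers (it is proper flat surjective with connected one-dimensional fibers, the fiber over a curve being the curve itself), so $\bar\calh_{g-1;1}^{f-1}$, being the preimage of the $p$-rank $f-1$ locus of $\bar\calh_{g-1}$, has every component of dimension $(g-2+(f-1))+1=g-2+f$ over the interior; more carefully, I restrict to the open part lying over $\calh_{g-1}$ where the dimension of the base stratum is $g-2+f-1$ and add $1$ for the fiber, and note the full $\Xi_0$ stratum's components all meet this open part. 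If $1\le i\le g-2$ and $f\ge 1$, then $\Xi_i[\bar\calh_g]^f$ is the union over admissible $(f_1,f_2)$ with $f_1+f_2=f-1$ of the images under the finite unramified map $\lambda_{i,g-1-i}$ of $\bar\calh_{i;1}^{f_1}\cross\bar\calh_{g-1-i;1}^{f_2}$; each factor $\bar\calh_{g_j;1}^{f_j}$ has components of dimension $g_j-1+f_j+1=g_j+f_j$ (again using the one-dimensional fiber of $\phi_{g_j;1}$), so the product has dimension $i+f_1+(g-1-i)+f_2=g-1+(f_1+f_2)=g-1+(f-1)=g-2+f$, independent of $(f_1,f_2)$. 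An admissible pair always exists since $f-1\le g-1$ allows $f_1=\min(f-1,i)$, $f_2=f-1-f_1$.

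The main obstacle I anticipate is the careful bookkeeping about $\bar\calh_{g_j;1}^{f_j}$: I need the $p$-rank locus inside the \emph{pointed} hyperelliptic stack, and I must be sure that the marked point contributes exactly one dimension and that this holds on every irreducible component, not just generically. The clean way to handle this is to note that the $p$-rank of a pointed curve depends only on the underlying curve, so $\bar\calh_{g_j;1}^{f_j}=\phi_{g_j;1}^{-1}(\bar\calh_{g_j}^{f_j})$ and $\phi_{g_j;1}$ is a fibration with irreducible one-dimensional fibers (a curve of genus $g_j$); restricting to the open locus over $\calh_{g_j}^{f_j}$, which meets every component of the stratum by irreducibility considerations on the source, gives the dimension count, and passing to the closure preserves the dimension of each component. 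A secondary, purely technical point is the precise sense in which $\kappa_{g_1,g_2}$, $\kappa_{g_1}$, $\lambda_{g_1,g_2}$ are quasi-finite on these strata; this follows because they are restrictions of the finite unramified morphisms on labeled-curve moduli (or, for the unlabeled versions, compositions with the finite \'etale $\varpi$), so I will simply invoke \cite[Cor.\ 3.9]{knudsen2} and the \'etaleness of $\varpi_g$ to conclude that images have the same dimension as sources.
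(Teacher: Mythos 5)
Your proposal is correct and follows essentially the same route as the paper's own proof: decompose each boundary $p$-rank stratum into images of products of lower-genus (pointed) $p$-rank strata under the finite clutching maps, compute the dimension of each product using $\dim\calh_{g'}^{f'}=g'-1+f'$ and the one-dimensional fibers of $\phi_{g';1}$, and conclude that the finite maps preserve these dimensions. Your extra care about the existence of admissible $(f_1,f_2)$ and about passing to closures is harmless but not needed; otherwise the argument matches the paper's.
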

  
\begin{proof}
For parts (a) and (b), the claims for $\bar\calh_g$ and for $\til\calh_g$ are
equivalent, since $\varpi_g$ is a finite map which preserves the
$p$-rank stratification.
Recall that $\bar\calh_g^f$ and $\til\calh_g^f$ are pure of dimension $g-1+f$ \cite[Thm.\ 1]{GP:05}.  

For part (a), suppose $0 \le f \le g$, and $1 \le i \leq g-1$, and that
$(f_1,f_2)$ is a pair which satisfies \eqref{f1f2conditions}.  Then
$\til\calh_{i}^{f_1} \cross \til \calh_{g-i}^{f_2}$ is pure of
dimension $\dim(\til \calh_{i}^{f_1}) + \dim(\til \calh_{g-i}^{f_2}) =
g-2+f$.  Since $\kappa_{i,g-i}$ is finite, $\Delta_i[\bar\calh_g]^f$
is pure of dimension $g-2+f$ as well.

For part (b), first suppose $i=0$.  
Then $\bar\calh_{g-1;1}^{f-1}$
is pure of dimension $\dim(\bar \calh_{g-1}^{f-1})+1 = g-2+f$.  Since
$\kappa_{g-1}$ is finite, $\Xi_0[\bar\calh_g]^f$ is pure of dimension
$g-2+f$ as well.

To finish part (b), suppose $1 \le i \le g-2$, and that
$(f_1,f_2)$ is a pair which satisfies \eqref{Xif1f2conditions}.  
Then $\til \calh_{i;1}^{f_1}
\cross \til \calh_{g-1-i;1}^{f_2}$ is pure of dimension 
$\dim(\til \calh_{i}^{f_1}) + \dim(\til \calh_{g-i-1}^{f_2}) +2= g-2+f$.
Since $\lambda_{i,g-1-i}$ is finite, $\Xi_i[\bar\calh_g]^f$
is pure of dimension $g-2+f$ as well.
\end{proof}

The next lemma shows that 
if $\eta$ is a generic point of $\bar\calh_g^f$, then the
curve $\calc_{g,\eta}$ is smooth. 
Thus no component of $\bar \calh_g^f$ is contained in the boundary 
$\del \bar \calh_g$.  

\begin{lemma}\label{lemhyplabeled}
Suppose $g \ge 1$ and $0 \le f \le g$.
\begin{alphabetize}
\item Then $\calh_g^f$ is open and dense in $\bar\calh_g^f$
and $\til \calh_{g}^f \cross_{\bar \calh_g} \calh_g$
is open and dense in $\til \calh_g^f$.
\item If $r \ge 1$, then $\calh_{g;r}^f$ is open and dense in $\bar \calh_{g;r}^f$
and $\til \calh_{g}^f \cross_{\bar \calh_g} \calh_g$ is open and dense in $\til \calh_g^f$.
\end{alphabetize}
\end{lemma}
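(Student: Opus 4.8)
The plan is to prove parts (a) and (b) together, since (b) essentially follows from (a) by a fibration argument. For part (a), the key point is to show that a generic hyperelliptic curve of each genus $g$ and each $p$-rank $f$ is smooth, i.e. that no component of $\bar\calh_g^f$ lies inside $\del\bar\calh_g$. First I would reduce the two assertions in (a) to a single one: since $\varpi_g:\til\calh_g\ra\bar\calh_g$ is finite, \'etale and preserves the $p$-rank stratification, the density of $\calh_g^f$ in $\bar\calh_g^f$ is equivalent to the density of its preimage in $\til\calh_g^f$, so it suffices to work downstairs. Openness of $\calh_g^f$ in $\bar\calh_g^f$ is immediate because $\calh_g$ is open in $\bar\calh_g$ and the $p$-rank stratification is compatible with this inclusion. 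So the real content is density.

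For density, I would argue by comparing dimensions. By \cite[Thm.\ 1]{GP:05}, every component of $\bar\calh_g^f$ has dimension $g-1+f$. On the other hand, by Lemma \ref{lemdimh}, every component of each boundary stratum $\Delta_i[\bar\calh_g]^f$ (for $1\le i\le g-1$) and, when $f\ge 1$, of each $\Xi_i[\bar\calh_g]^f$ (for $0\le i\le g-2$) has dimension $g-2+f$, which is strictly smaller. The $p$-rank $f$ locus of the boundary $\del\bar\calh_g$ is the union of these strata (using \eqref{eqblrprank}, \eqref{eqblrprank0}, \eqref{eqblrpranklambda} as recalled at the start of this subsection), so $(\del\bar\calh_g)\cap\bar\calh_g^f$ has dimension at most $g-2+f$. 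Therefore no irreducible component $Z$ of $\bar\calh_g^f$ can be contained in $\del\bar\calh_g$: if it were, then $Z$ would be a component of $\del\bar\calh_g^f$ of dimension $g-1+f$, contradicting the dimension bound. Hence $Z\cap\calh_g$ is a nonempty open dense subset of $Z$; ranging over all components $Z$ shows $\calh_g^f$ is dense in $\bar\calh_g^f$. When $f=0$ the relevant boundary strata are only the $\Delta_i[\bar\calh_g]^0$, but the same dimension count applies, so the argument goes through verbatim.

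For part (b), I would use the forgetful fibration $\phi_{g;r}:\bar\calh_{g;r}\ra\bar\calh_g$, which is proper, flat and surjective with connected fibers, and which restricts to $\calh_{g;r}\ra\calh_g$. Since $\phi_{g;r}$ is flat with irreducible fibers (projective $r$-fold fiber powers of the universal curve are geometrically connected and of pure dimension), the preimage of an irreducible dense open subset is again irreducible dense open; more precisely, $\bar\calh_{g;r}^f=\phi_{g;r}^{-1}(\bar\calh_g^f)$ since the $p$-rank depends only on the underlying curve, and $\calh_{g;r}^f=\phi_{g;r}^{-1}(\calh_g^f)$. Pulling back the density statement from (a) along the open, dominant (indeed flat surjective) map $\phi_{g;r}$ gives that $\calh_{g;r}^f$ is dense in $\bar\calh_{g;r}^f$; openness is again clear. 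The corresponding statement for $\til\calh_{g;r}^f$ follows by the same finiteness argument with $\varpi_g$ as in part (a), or is literally the statement already proved in (a) pulled back.

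The main obstacle here is essentially bookkeeping rather than a deep difficulty: one must be careful that the list of boundary $p$-rank strata is complete and that Lemma \ref{lemdimh} covers all of them with the right (smaller) dimension — in particular one should double-check the $f=0$ case, where $\Xi_i[\bar\calh_g]^0$ is empty for the reasons in \eqref{eqblrprank0} and \eqref{eqblrpranklambda} (a node of compact type or a non-separating node forces $p$-rank at least $1$ from the toric part), so only the $\Delta_i$-strata contribute. Once that is in hand, the dimension comparison with \cite[Thm.\ 1]{GP:05} does all the work, and part (b) is a routine fibration pullback.
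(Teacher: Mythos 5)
Your argument is essentially the same as the paper's: the paper also proves part (a) for $g\ge 2$ by combining Lemma \ref{lemdimh} with the purity statement of \cite[Thm.\ 1]{GP:05}, and deduces part (b) from $\bar\calh_{g;r}^f = \bar\calh_{g;r}\cross_{\bar\calh_g}\bar\calh_g^f$. The only thing you omit is the base case $g=1$: the statement allows $g\ge 1$, but Lemma \ref{lemdimh} (and hence your dimension comparison) applies only for $g\ge 2$, so the $g=1$ case needs a separate one-line remark, as the paper gives.
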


\begin{proof}
Part (a) is well-known for $g=1$.
For $g \ge 2$, part (a) follows from Lemma \ref{lemdimh} since $\bar \calh_g^f$ and
$\til \calh_{g}^f$ are pure of dimension $g-1+f$ \cite[Thm.\ 1]{GP:05}.  
Part (b) follows from part (a) since the $p$-rank of a labeled curve
depends only on the underlying curve, so that $\bar\calh_{g;r}^f =
\bar\calh_{g;r} \cross_{\bar \calh_g} \bar\calh_g^f$.
\end{proof}

Suppose $S$ is an irreducible component of $\calh_g^f$ and let $\bar S$ be its closure
in $\bar \calh_g$.
Note that $\bar S$ can contain the moduli points of curves with lower
$p$-rank.  (In fact, this always happens when $f \geq 1$, see Corollary \ref{cordegenprank}.)

\begin{lemma} \label{Lintersecttheory}
\begin{alphabetize}
\item Let $S$ be an irreducible component of $\calh_g^f$.
If $\bar S$ intersects a component $\Gamma$ of $\Delta_i[\bar \calh_g]^f$
then $\bar S$ contains $\Gamma$.
\item Let $\til S$ be an irreducible component of $\til \calh_g^f$,
  with closure $S^*$.  If $S^*$ intersects a component $\til \Gamma$ of
  $\Delta_i[\til \calh_g]^f$, then $S^*$ contains $\Gamma$.
\end{alphabetize}
\end{lemma}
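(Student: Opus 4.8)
The plan is to prove part (a) by a dimension count, using that both $\bar S$ and the components of $\Delta_i[\bar\calh_g]^f$ are closed irreducible substacks of $\bar\calh_g^f$ whose dimensions are controlled by the results already established. First I would recall from Lemma \ref{lemhyplabeled}(a) that $S$, being an irreducible component of $\calh_g^f$, has dimension $g-1+f$, hence so does its closure $\bar S$; and by Lemma \ref{lemdimh}(a) every component $\Gamma$ of $\Delta_i[\bar\calh_g]^f$ has dimension $g-2+f$. Now suppose $\bar S$ meets $\Gamma$. Since $\Gamma$ is irreducible and $\Gamma \subseteq \Delta_i[\bar\calh_g]$ is contained in the boundary, while $\bar S \cap \del\bar\calh_g$ is a proper closed substack of the irreducible $\bar S$ (no component of $\bar\calh_g^f$ lies in the boundary, by the remark following Lemma \ref{lemhyplabeled}), the intersection $\bar S \cap \Delta_i[\bar\calh_g]^f$ has dimension at most $g-2+f$. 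The point is then to show $\Gamma \subseteq \bar S$: if not, $\Gamma \cap \bar S$ would be a proper closed substack of the irreducible $\Gamma$, hence of dimension strictly less than $g-2+f$, i.e. at most $g-3+f$. But $\Gamma \cap \bar S$ is nonempty by hypothesis, and I would like to force its dimension to be at least $g-2+f$, contradicting this.

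To get that lower bound I would invoke the fact that $\Delta_i[\bar\calh_g]$ is an irreducible Cartier divisor in $\bar\calh_g$ (stated in Section \ref{S2}: each $\Delta_i$ and $\Xi_i$ is an irreducible divisor), so that on the $(g-1+f)$-dimensional irreducible $\bar S$, the intersection with this divisor is either all of $\bar S$ — impossible, as $\bar S\not\subseteq\del\bar\calh_g$ — or is pure of codimension one in $\bar S$, i.e. pure of dimension $g-2+f$. Intersecting further with the $p$-rank $f$ condition does not drop the dimension here, because by semicontinuity of the $p$-rank the locus where the $p$-rank is $\ge f$ is open, and on $\bar S$ the generic $p$-rank is exactly $f$; one checks that the $p$-rank $f$ locus of $\bar S \cap \Delta_i[\bar\calh_g]$ is dense in $\bar S \cap \Delta_i[\bar\calh_g]$ using that $\calh_{i}^{f_1}\cross\calh_{g-i}^{f_2}$ is dense in $\til\calh_i^{f_1}\cross\til\calh_{g-i}^{f_2}$ (Lemma \ref{lemhyplabeled}) together with the explicit description of $\Delta_i[\bar\calh_g]^f$ via $\kappa_{i,g-i}$. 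Hence every component of $\bar S \cap \Delta_i[\bar\calh_g]^f$ has dimension exactly $g-2+f$. Since such a component meeting $\Gamma$ is contained in $\Delta_i[\bar\calh_g]^f$, and the components of the latter all have dimension $g-2+f$ and are irreducible, a component of $\bar S\cap\Delta_i[\bar\calh_g]^f$ that meets $\Gamma$ must equal $\Gamma$; therefore $\Gamma\subseteq\bar S$.

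For part (b), I would deduce it from part (a), or rather re-run the same argument one level up: $\varpi_g:\til\calh_g\ra\bar\calh_g$ is finite, \'etale and preserves the $p$-rank stratification, so $\til S$ has dimension $g-1+f$, its closure $S^*$ does too, and by Lemma \ref{lemdimh}(a) every component $\til\Gamma$ of $\Delta_i[\til\calh_g]^f$ has dimension $g-2+f$. Because $\varpi_g$ is \'etale, $\Delta_i[\til\calh_g]=\varpi_g^{-1}(\Delta_i[\bar\calh_g])$ is again a Cartier divisor in $\til\calh_g$, so the identical codimension-one intersection argument applies verbatim: $S^*\cap\Delta_i[\til\calh_g]^f$ is pure of dimension $g-2+f$, and any of its components meeting $\til\Gamma$ equals $\til\Gamma$ (note the statement asks only that $S^*$ contain $\Gamma$, presumably a typo for $\til\Gamma$, and the argument gives $\til\Gamma\subseteq S^*$).

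The main obstacle I anticipate is the density/dimension bookkeeping for the $p$-rank $f$ locus inside the boundary divisor — specifically, verifying that intersecting $\bar S\cap\Delta_i[\bar\calh_g]$ with the $p$-rank $=f$ condition does not lose dimension. This is where one genuinely needs the purity statement $\dim\bar\calh_g^f=g-1+f$ from \cite[Thm.\ 1]{GP:05} together with Lemma \ref{lemdimh}, rather than a naive codimension count; the subtlety is that $\bar S\cap\Delta_i[\bar\calh_g]$ a priori also contains points of $p$-rank $<f$, and one must argue those form a proper closed sublocus. Everything else is standard irreducible-divisor intersection theory on Deligne--Mumford stacks.
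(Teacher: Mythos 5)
Your argument is essentially the paper's own proof, carried out in slightly more detail. The paper also proves (a) by observing that $\bar S \not\subset \Delta_i[\bar\calh_g]$ (by Lemma \ref{lemhyplabeled}(a)), invoking the intersection-theoretic principle for smooth proper stacks that the codimension of an intersection is at most the sum of the codimensions, concluding that $\bar S \cap \Delta_i[\bar\calh_g]$ is pure of dimension $\dim \bar S - 1 = g-2+f = \dim\Delta_i[\bar\calh_g]^f$, and then deducing that a nonempty intersection with a component $\Gamma$ of $\Delta_i[\bar\calh_g]^f$ forces $\Gamma \subseteq \bar S$; part (b) is noted to be entirely analogous, exactly as you say. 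The extra paragraph you include about densities of the $p$-rank $f$ locus via semicontinuity is implicit in the paper's appeal to Lemma \ref{lemdimh}, but it is a correct and helpful elaboration of the step from the pure-dimensional intersection with the divisor to the identification of its components with components of $\Delta_i[\bar\calh_g]^f$. You are also right that the ``$\Gamma$'' in part (b) should read ``$\til\Gamma$''.
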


\begin{proof}
Part (a) is proved here; part (b) is proved in an entirely analogous fashion.
A smooth proper stack has the same intersection-theoretic properties as a 
smooth proper scheme \cite[p.\ 614]{V:stack}.
In particular, if two closed substacks of $\bar \calh_g$ intersect 
then the codimension of their intersection is at most the sum of their codimensions.
Now $\bar S$ and $\Delta_i[\bar \calh_g]$ are closed
substacks of $\bar \calh_g$. 
By Lemma \ref{lemhyplabeled}(a), $\bar S \not \subset \Delta_i[\bar \calh_g]$.
Thus the intersection of $\bar S$ with the divisor $\Delta_i[\bar\calh_g]$ has 
pure dimension $\dim\bar S - 1$,
which equals $\dim (\Delta_i[\bar\calh_g]^f)$ by Lemma \ref{lemdimh}.  
Thus if $\bar S$ intersects a component $\Gamma$ of $\Delta_i[\bar \calh_g]^f$
then it must contain the full component $\Gamma$.
\end{proof}

\begin{lemma} \label{LhypinterDelta0}
Suppose $g \ge 2$ and $0 \le f \le g$.  Let $S$ be an irreducible component of $\calh_g^f$. 
\begin{alphabetize}
\item Then $\bar S$ intersects $\Delta_0[\bar \calh_g]$ if and only if $f \ge 1$.
\item If $f \ge 1$, then each irreducible component of $\Delta_0[\bar S]$ contains
either (i) the image of a component of $\bar\calh_{g-1;1}^{f-1}$ under $\kappa_{g-1}$
or (ii) the image of a component of $\bar \calh_{i;1}^{f_1} \cross
\bar \calh_{g-1-i;1}^{f_2}$ 
under $\lambda_{i, g-1-i}$ for some $1 \leq i \leq g-2$ and some pair $(f_1,f_2)$
which satisfies \eqref{Xif1f2conditions}.  

\item If $f = 0$, then $\bar S$ contains the image of a component of
$\til \calh_i^0 \cross \til \calh_{g-i}^0$ under $\kappa_{i,g-i}$ for some $1 \le i \le g-1$. 
\end{alphabetize}
\end{lemma}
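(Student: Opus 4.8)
I would prove~(a) first and then derive~(b) and~(c) from it together with the boundary description of Section~\ref{Sdegen} and Lemmas~\ref{lemdimh} and~\ref{Lintersecttheory}. Three standing facts will be used throughout: $\dim\bar S=g-1+f$; the generic point of $\bar S$ equals that of $S$, so $\bar S$ is not contained in $\partial\bar\calh_g$ and the $p$-rank is $\le f$ at every point of $\bar S$ by lower semicontinuity; and, by the intersection theory on smooth proper Deligne--Mumford stacks invoked in the proof of Lemma~\ref{Lintersecttheory}, the intersection of $\bar S$ with $\partial\bar\calh_g$ (or with any single boundary divisor, or with $\Delta_0[\bar\calh_g]$) is, whenever nonempty, of pure dimension $g-2+f$. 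The easy half of~(a) is immediate: a point of $\Delta_0[\bar\calh_g]$ is the moduli point of a stable hyperelliptic curve $C$ that is not of compact type, so $\pic^0(C)$ has positive toric rank, hence $p$-rank $\ge 1$; comparing with the bound on $\bar S$ gives $\bar S\cap\Delta_0[\bar\calh_g]=\emptyset$ when $f=0$, and shows more generally that when $f=0$ no $\Xi_i$ meets $\bar S$.

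The main point --- and the step I expect to be the genuine obstacle --- is the converse in~(a): when $f\ge 1$, $\bar S$ must actually reach $\Delta_0[\bar\calh_g]$, rather than meeting $\partial\bar\calh_g$ only along the compact-type divisors $\Delta_1,\dots,\Delta_{g-1}$. The plan is to prove this by induction on $g$, carried out in $\til\calh_g$ (the finite map $\varpi_g$ preserves the $p$-rank stratification and carries $\Delta_0[\til\calh_g]$ onto $\Delta_0[\bar\calh_g]$, so the statement transfers: a component $S$ of $\calh_g^f$ is the image of a component $\til S$ of $\til\calh_g^f$, and $\bar S=\varpi_g(\overline{\til S})$ by properness). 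The geometric input is that the coarse space of $\til\calh_g^{\smooth}$, a configuration space of $2g+2$ ordered points on $\proj^1$ modulo $\gl_2$, is affine, so $\til\calh_g^{\smooth}$ contains no complete substack of positive dimension; since $\dim\til S=g-1+f>0$, $\overline{\til S}$ must meet $\partial\til\calh_g$. The base case $g=1$ is elementary: $\til\calh_1$ is proper and one-dimensional, $\til\calh_1^1$ is dense in it, and its whole boundary is $\Delta_0[\til\calh_1]$, which is nonempty since $\til\calh_1$ is not affine.

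For the inductive step I would argue by contradiction: suppose $\overline{\til S}\cap\Delta_0[\til\calh_g]=\emptyset$. Then $\overline{\til S}\cap\partial\til\calh_g$, of pure dimension $g-2+f$, lies in $\bigcup_{i=1}^{g-1}\Delta_i[\til\calh_g]$; comparing dimensions with Lemma~\ref{lemdimh}(a) (the lower $p$-rank strata of $\Delta_i$ being smaller) shows that one of its components is the closure of a component $\til\Gamma$ of $\Delta_i[\til\calh_g]^f$, with $\til\Gamma\subseteq\overline{\til S}$ by Lemma~\ref{Lintersecttheory}(b). By the $\til\calh$-analogue of the description in Section~\ref{Sdegen}, $\til\Gamma$ is the image, under a clutching map $\til\calh_i\cross\til\calh_{g-i}\to\til\calh_g$, of a component $U_1\cross U_2$ with $U_1,U_2$ components of $\til\calh_i^{f_1},\til\calh_{g-i}^{f_2}$ and $f_1+f_2=f\ge 1$; interchanging the factors if necessary, assume $f_1\ge 1$. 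By the inductive hypothesis (or the base case if $i=1$), $\overline{U_1}$ contains a point of $\Delta_0[\til\calh_i]$; clutching it with any point of $U_2$ gives, via the isomorphism $\pic^0(Y_1\cup Y_2)\iso\pic^0(Y_1)\cross\pic^0(Y_2)$ of~\eqref{eqblr}, a curve of positive toric rank, i.e.\ a point of $\Delta_0[\til\calh_g]$ lying in the image of $\overline{U_1\cross U_2}$, which is $\overline{\til\Gamma}\subseteq\overline{\til S}$ --- a contradiction. This proves~(a).

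Finally,~(b) and~(c) follow by bookkeeping. For~(b), with $f\ge 1$, part~(a) shows $\Delta_0[\bar S]=\bar S\cross_{\bar\calh_g}\Delta_0[\bar\calh_g]$ is nonempty, hence of pure dimension $g-2+f$; each of its components lies in some $\Xi_i$ with $0\le i\le\floor{(g-1)/2}$, has dimension $g-2+f$, and --- comparing with Lemma~\ref{lemdimh}(b) --- has generic point of $p$-rank exactly $f$, hence is the closure of, and in particular contains, a component of $\Xi_i[\bar\calh_g]^f$; since $\Xi_0[\bar\calh_g]^f$ is the image of $\bar\calh_{g-1;1}^{f-1}$ under $\kappa_{g-1}$ and, for $1\le i\le g-2$, $\Xi_i[\bar\calh_g]^f$ is a union of images of $\bar\calh_{i;1}^{f_1}\cross\bar\calh_{g-1-i;1}^{f_2}$ under $\lambda_{i,g-1-i}$ (Section~\ref{Sdegen}), this yields alternatives (i) and~(ii). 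For~(c), $f=0$, so every point of $\bar S$ has $p$-rank $0$, and by the first paragraph $\bar S\cap\partial\bar\calh_g$ meets only $\bigcup_{i=1}^{g-1}\Delta_i[\bar\calh_g]$; being of pure dimension $g-2$, one of its components is a component of $\Delta_i[\bar\calh_g]^0$ for some $i$ (Lemma~\ref{lemdimh}(a)), which by Section~\ref{Sdegen} (the only possibility being $f_1=f_2=0$) is the image of a component of $\til\calh_i^0\cross\til\calh_{g-i}^0$ under $\kappa_{i,g-i}$, and is contained in $\bar S$.
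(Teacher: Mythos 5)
Your proposal is correct, and parts (b), (c), and the ``only if'' direction of (a) follow essentially the same route as the paper (dimension count of pure codimension one for the boundary intersection, then Lemma~\ref{Lintersecttheory} to upgrade an intersection to a containment, plus the affineness of the open moduli space for (c)). Where you genuinely diverge is the key step, the ``if'' direction of (a): that when $f\ge 1$ the complete stack $\bar S$ must reach $\Delta_0[\bar\calh_g]$. The paper dispatches this in one line by quoting the result of \cite{FVdG:complete} that a complete substack of $\bar\calh_g-\Delta_0[\bar\calh_g]$ has dimension at most $g-1$, which a dimension count immediately contradicts. You instead run an induction on $g$ inside $\til\calh_g$: assuming $\overline{\til S}$ avoids $\Delta_0$, the nonempty boundary intersection is forced by dimension to contain a full component $\til\Gamma$ of some $\Delta_i[\til\calh_g]^f$, whose decomposition as the image of $U_1\cross U_2$ with $f_1+f_2=f\ge 1$ lets you invoke the inductive hypothesis on a factor of smaller genus and positive $p$-rank; clutching a boundary point of $\overline{U_1}$ with a point of $U_2$ then produces a noncompact-type curve in $\overline{\til S}$, a contradiction. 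This works: the dimension argument correctly pins the generic $p$-rank of each component of $\overline{\til S}\cap\partial\til\calh_g$ at exactly $f$, the transfer between $\calh_g$ and $\til\calh_g$ via the finite \'etale Galois map $\varpi_g$ is legitimate, and the base case $g=1$ is as you state (the $\lambda$-line compactified by nodal cubics). The trade-off is that the paper's argument is shorter but leans on a nontrivial external theorem, while yours is longer but self-contained, needing only the affineness of the moduli of labeled smooth hyperelliptic curves (the analogue of the \cite{yamaki04} fact the paper already uses for part (c)) and the boundary bookkeeping of Section~\ref{Sdegen}. Two small slips worth flagging: your appeal to \eqref{eqblr} when clutching $u_1\in\Delta_0[\til\calh_i]$ with $u_2$ is not quite the right citation, since $\pic^0(Y_1)$ is then only semiabelian; the relevant fact is simply that gluing two curves at a single point preserves the property of not being of compact type. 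And your degeneration step should note explicitly, as you implicitly do via $\overline{U_1\cross U_2}=\overline{U_1}\cross\overline{U_2}$ and the properness of $\til\kappa_{i,g-i}$, that the clutched point lands in $\overline{\til\Gamma}\subseteq\overline{\til S}$.
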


\begin{proof}
If $f = 0$, 
then \eqref{eqblrprank0} and \eqref{eqblr0b} imply that $\bar S$ does not
intersect $\Delta_0[\bar \calh_g]$.  If $f \geq 1$, then $\bar S$ is
a complete substack of dimension greater than $g-1$.  By \cite[Lemma
2.6]{FVdG:complete}, a complete substack of $\bar\calh_g - \Delta_0[\bar\calh_g]$
has dimension at most $g-1$.  Therefore, $\bar S$ intersects
$\Delta_0[\bar\calh_g]$.
This completes part (a).

For part (b), each irreducible component of $\Delta_0[\bar S]$
intersects either $\kappa_{g-1}(\bar \calh_{g-1;1}^{f-1}) \subset \Xi_0[\bar \calh_g]$ 
or $\lambda_{i,g-1-i}(\bar \calh_{i;1}^{f_1} \cross \bar \calh_{g-1-i;1}^{f_2}) \subset \Xi_i[\bar \calh_g]$ 
for some $1 \leq i \leq g-2$ and some pair $(f_1,f_2)$ which satisfies \eqref{Xif1f2conditions}.
The result then follows from Lemma \ref{Lintersecttheory}(a).
 
For part (c), recall that $\calh_g$ contains no complete
substacks of positive dimension (e.g., \cite[Cor.\ 1.9]{yamaki04}).
Thus $\bar S$ intersects $\Delta_i$ for some $0 \leq i \leq g-1$.  
By part (a), $i\not = 0$.  The result follows from Lemma
\ref{Lintersecttheory}(a).
\end{proof}

\subsection{Complements on trees}
Many of the results for $\Delta_i$ for positive $i$ have analogues
for $\Delta_\Lambda$.  For a clutching tree $\Lambda$ and a
nonnegative integer $f$, define an index set by
\begin{equation}
\label{eqflambdaconditions}
\calf(\Lambda,f) = \left\{ \left\{f_v:v \in \Lambda\right\} : 0 \le f_v \le g_v,
  \sum_v f_v = f\right\}.
\end{equation}

\begin{lemma}
\label{lemdimtree}
Let $\Lambda$ be a clutching tree with $g(\Lambda) = g$.
\begin{alphabetize}
\item The $p$-rank strata of $\Delta_\Lambda[\bar \calh_g]$ are given by
\begin{equation}
\label{eqdeltalambdaf}
\Delta_\Lambda[\bar \calh_g]^f = \bigcup_{\st{f_v} \in \calf(\Lambda,f)}
\kappa_\Lambda(\cross_{v\in \Lambda} \til \calh_{g_v}^{f_v}).
\end{equation}
\item Every component of $\Delta_\Lambda[\bar \calh_g]^f$ has dimension $g+f-\abs \Lambda$.
\end{alphabetize}
\end{lemma}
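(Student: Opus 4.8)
\textbf{Proof plan for Lemma \ref{lemdimtree}.}

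The plan is to prove both parts simultaneously by induction on the number of vertices $\abs\Lambda$, using the factorization of $\kappa_\Lambda$ through clutching maps on fewer vertices that was described in the subsection ``Clutching along trees''. The base case $\abs\Lambda = 1$ is trivial: there $\Delta_\Lambda[\bar\calh_g] = \bar\calh_g$, the index set $\calf(\Lambda,f)$ is the single element $\{f_v = f\}$, and the statement reduces to the fact that $\bar\calh_g^f$ is pure of dimension $g-1+f = g+f-\abs\Lambda$ \cite[Thm.\ 1]{GP:05}. The base case $\abs\Lambda = 2$ is exactly Lemma \ref{lemdimh}(a): picking a leaf edge joining vertices $v_1, v_2$ with genera $g_{v_1}=g_1$ and $g_{v_2}=g_2$, the map $\kappa_\Lambda$ is $\kappa_{g_1,g_2}$, formula \eqref{eqblrprank} gives the $p$-rank additivity $f(Y) = f(Y_1)+f(Y_2)$ that yields \eqref{eqdeltalambdaf}, and the dimension count is as in the proof of Lemma \ref{lemdimh}(a).

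For the inductive step, I would pick a leaf $v_0$ of $\Lambda$, adjacent to a unique vertex $v_1$. Let $\Lambda'$ be the clutching tree obtained by deleting $v_0$ (so $\abs{\Lambda'} = \abs\Lambda - 1$ and $g(\Lambda') = g - g_{v_0}$), and factor $\kappa_\Lambda$ as a product: cluster all vertices other than $v_0$ via $\kappa_{\Lambda'}$ on $\cross_{v \neq v_0} \til\calh_{g_v}$, landing in $\til\calh_{g-g_{v_0}}$, and then apply a clutching map of type $\til\kappa_{g_{v_0}, g-g_{v_0}}$ (composed with the appropriate symmetric-group elements, as in the discussion of clutching along trees, so that we glue along the correct section). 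Since each clutching map is finite \cite[Cor.\ 3.9]{knudsen2}, $\kappa_\Lambda$ is finite, so it preserves dimension and its image decomposes according to the images of the components of its source. For part (a), I would combine the inductive hypothesis $\Delta_{\Lambda'}[\bar\calh_{g-g_{v_0}}]^{f'} = \bigcup_{\{f_v\}\in\calf(\Lambda',f')} \kappa_{\Lambda'}(\cross_{v\neq v_0}\til\calh_{g_v}^{f_v})$ with the two-vertex $p$-rank formula \eqref{eqblrprank}, noting that $\calf(\Lambda,f)$ is exactly the union over $0 \le f_{v_0} \le g_{v_0}$ of the sets $\{f_{v_0}\} \sqcup \calf(\Lambda', f - f_{v_0})$; this bookkeeping gives \eqref{eqdeltalambdaf}. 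For part (b), each source component $\til\calh_{g_{v_0}}^{f_{v_0}} \cross (\text{component of } \cross_{v\neq v_0}\til\calh_{g_v}^{f_v})$ is pure of dimension $(g_{v_0}-1+f_{v_0}) + (g - g_{v_0} + f - f_{v_0} - \abs{\Lambda'})$ by the inductive hypothesis applied to $\Lambda'$ (noting $\til\calh$ has the same $p$-rank stratum dimensions as $\bar\calh$, since $\varpi_{g(\Lambda')}$ is finite and preserves the stratification); this simplifies to $g + f - \abs\Lambda$, and finiteness of $\kappa_\Lambda$ transports the dimension to the image.

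The only subtlety — and the step I would be most careful about — is making sure the factorization of $\kappa_\Lambda$ through $\kappa_{\Lambda'}$ followed by a single two-vertex clutching is legitimate at the level of the labeled spaces $\til\calh$, i.e. that the arbitrary choices of gluing section and relabeling built into $\til\kappa_{g_1,g_2}$ can be arranged to agree with those built into $\kappa_\Lambda$; this is precisely what is guaranteed by the remarks on clutching along trees and by composing with elements of the relevant symmetric groups, so no genuine obstacle arises, but it should be invoked explicitly rather than glossed over. I would also note that part (a) should be stated and used for $\til\calh$ as well as $\bar\calh$ in the induction (the $\til\calh$ version following from the $\bar\calh$ version via the finite map $\varpi$, just as in Lemma \ref{lemdimh}), since the inductive step feeds the labeled version back in.
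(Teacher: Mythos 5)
Your proposal is correct and matches the paper's approach: the paper's proof of part (a) is exactly "induction on $\abs\Lambda$ using the $p$-rank additivity coming from \eqref{f1f2conditions}," and its proof of part (b) is the dimension count $\dim(\cross_{v\in\Lambda}\til\calh_{g_v}^{f_v}) = \sum_v(g_v+f_v-1) = g+f-\abs\Lambda$ transported through the finite map $\kappa_\Lambda$. The only cosmetic difference is that the paper states the dimension sum directly rather than deriving it inductively alongside part (a), but the underlying argument is the same and your care about the labeled-gluing bookkeeping is a reasonable (if ultimately harmless) point to flag.
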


\begin{proof}
Part (a) follows from \eqref{f1f2conditions} and induction on $\abs
\Lambda$.  Part (b) follows from this and the calculation that, for
$\st{f_v} \in \calf(\Lambda,f)$,
$$
\dim(\cross_{v\in \Lambda} \til \calh_{g_v}^{f_v}) =
\sum_{v\in\Lambda}(g_v+f_v-1) = g + f - \abs \Lambda.$$
\end{proof}

\begin{lemma}
\label{lemtouchcontain}
Let $S$ be an irreducible component of $\calh_g^f$.  Let $\Lambda$ be
a clutching tree with $g(\Lambda) = g$.  If $\bar S$ intersects a
component $\Gamma$ of $\Delta_\Lambda[\bar \calh_g]^f$, then $\bar S$ contains $\Gamma$.
\end{lemma}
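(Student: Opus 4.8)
The plan is to reduce Lemma~\ref{lemtouchcontain} to the two-vertex case, Lemma~\ref{Lintersecttheory}(a), by the usual dimension-counting argument, now applied to the locally closed stratum $\Delta_\Lambda[\bar\calh_g]^f$ inside $\bar\calh_g$. First I would record the relevant dimensions: by Lemma~\ref{lemdimtree}(b), every component $\Gamma$ of $\Delta_\Lambda[\bar\calh_g]^f$ has dimension $g+f-\abs\Lambda$, while $\dim\bar S = g-1+f$ since $S$ is a component of $\calh_g^f$ and $\calh_g^f$ is pure of dimension $g-1+f$ by \cite[Thm.\ 1]{GP:05}. Thus $\Gamma$ has codimension $\abs\Lambda-1$ in $\bar\calh_g$, matching the expected codimension of the closed substack $\Delta_\Lambda[\bar\calh_g]$, which is the image of $\kappa_\Lambda$ and is a finite union of translates of iterated clutching images, each cut out (generically) by $\abs\Lambda-1$ nodal conditions.

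The key geometric input is that $\bar\calh_g$ is a smooth proper Deligne-Mumford stack, so it has the intersection-theoretic properties of a smooth proper scheme \cite[p.\ 614]{V:stack}; in particular, if two closed substacks meet, their intersection has codimension at most the sum of the codimensions (equivalently, excess-intersection/dimension bounds hold for proper intersection with a locally complete intersection). The substack $\Delta_\Lambda[\bar\calh_g]$ is, locally on $\bar\calh_g$, the vanishing locus of $\abs\Lambda-1$ independent equations (one per edge of $\Lambda$), hence a local complete intersection of pure codimension $\abs\Lambda-1$. Since $\bar S$ is not contained in $\del\bar\calh_g$ — by Lemma~\ref{lemhyplabeled}(a), $\calh_g^f$ is dense in $\bar\calh_g^f$, so $\bar S\not\subset\Delta_\Lambda[\bar\calh_g]$ for any nontrivial $\Lambda$ — every component of the scheme-theoretic intersection $\bar S\cap\Delta_\Lambda[\bar\calh_g]$ has dimension at least $\dim\bar S-(\abs\Lambda-1)=g+f-\abs\Lambda$. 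But this number equals $\dim\Gamma$ for any component $\Gamma$ of $\Delta_\Lambda[\bar\calh_g]^f$ by Lemma~\ref{lemdimtree}(b). Therefore, if $\bar S$ meets such a $\Gamma$, the component of $\bar S\cap\Delta_\Lambda[\bar\calh_g]$ through that point has dimension $\geq g+f-\abs\Lambda$, lies inside $\Gamma$ (since its points have $p$-rank $f$, it lies in the $p$-rank-$f$ stratum of $\Delta_\Lambda$, and meeting $\Gamma$ forces it into $\Gamma$ by irreducibility of $\Gamma$ and the matching dimension), and by dimension equality must be all of $\Gamma$. Hence $\bar S\supseteq\Gamma$.

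Alternatively — and this is probably the cleanest writeup — one can avoid re-deriving the excess-intersection bound by inducting on $\abs\Lambda$ and invoking Lemma~\ref{Lintersecttheory}(a) directly. Collapse an edge of $\Lambda$ adjacent to a leaf $v_1$ with neighbor $v_2$: write $\Delta_\Lambda\subset\Delta_{g_{v_1}}[\bar\calh_g]$ via the factorization $\kappa_\Lambda$ through $\kappa_{g_{v_1},\,g-g_{v_1}}$ composed with a clutching map on the smaller factor. If $\bar S$ meets the component $\Gamma$ of $\Delta_\Lambda[\bar\calh_g]^f$, then it meets the component of $\Delta_{g_{v_1}}[\bar\calh_g]^f$ containing $\Gamma$; by Lemma~\ref{Lintersecttheory}(a), $\bar S$ contains that component $\Gamma'$, which is the image under $\kappa_{g_{v_1},\,g-g_{v_1}}$ of $\til\calh_{g_{v_1}}^{f_{v_1}}\times\til\calh_{g-g_{v_1}}^{f'}$ for the appropriate $(f_{v_1},f')$. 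Pulling back along this finite clutching map, one finds that (the closure of) a component of $S$'s preimage in $\til\calh_{g_{v_1}}^{f_{v_1}}\times\til\calh_{g-g_{v_1}}^{f'}$ meets — hence, by induction on $\abs\Lambda$ applied inside the second factor $\bar\calh_{g-g_{v_1}}$, contains — the corresponding smaller tree stratum $\Delta_{\Lambda\smallsetminus v_1}$; pushing forward, $\bar S\supseteq\Delta_\Lambda[\bar\calh_g]$'s relevant component, i.e.\ $\bar S\supseteq\Gamma$. The base case $\abs\Lambda=2$ is exactly Lemma~\ref{Lintersecttheory}(a).

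The main obstacle is bookkeeping rather than conceptual: one must check that the scheme-theoretic intersection $\bar S\cap\Delta_\Lambda[\bar\calh_g]$ — or, in the inductive version, the preimage of $S$ under the finite clutching map and its behavior relative to the product $p$-rank stratification \eqref{eqflambdaconditions}–\eqref{eqdeltalambdaf} — does not drop dimension and that the $p$-rank-$f$ condition indeed confines the relevant component to a single $\Gamma$ in the stratification of Lemma~\ref{lemdimtree}(a), using that distinct $\st{f_v}$ give strata whose generic points have genuinely different invariants on the clutched pieces. The finiteness of $\varpi_g$ and of all the clutching maps (from \cite[Cor.\ 3.9]{knudsen2}) guarantees dimensions are preserved under pushforward and pullback, so the dimension equality $\dim\Gamma = g+f-\abs\Lambda = \dim\bar S - (\abs\Lambda - 1)$ does all the work once the intersection is known to be proper.
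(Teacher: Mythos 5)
Your first argument --- the dimension count via the intersection-theoretic inequality for closed substacks of the smooth proper stack $\bar\calh_g$ --- is essentially the paper's proof: the paper records $\dim\Gamma \ge \dim\bar S + \dim\Delta_\Lambda[\bar\calh_g] - \dim\bar\calh_g$, applies Lemma \ref{lemdimtree}(b) to identify the right-hand side with $g+f-\abs\Lambda = \dim(\Delta_\Lambda[\bar\calh_g]^f)$, and concludes exactly as you do, invoking Lemma \ref{lemhyplabeled}(a) to ensure $\bar S\not\subset\Delta_\Lambda[\bar\calh_g]$. Your side remark that $\Delta_\Lambda$ is a local complete intersection of codimension $\abs\Lambda - 1$ is true but unnecessary here: the codimension bound cited from \cite[p.\ 614]{V:stack} applies to arbitrary closed substacks of a smooth proper stack, with no LCI hypothesis. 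Your second, inductive argument is also correct but contains a superfluous detour: once Lemma \ref{Lintersecttheory}(a) gives $\bar S \supset \Gamma'$, where $\Gamma'$ is the unique component of $\Delta_{g_{v_1}}[\bar\calh_g]^f$ containing $\Gamma$ (which exists because $\Delta_\Lambda[\bar\calh_g]^f\subset\Delta_{g_{v_1}}[\bar\calh_g]^f$ and $\Gamma$ is irreducible), the conclusion $\bar S\supset\Gamma$ follows immediately from $\Gamma\subset\Gamma'$ --- there is no need for induction on $\abs\Lambda$, pullback through the clutching map, or tracking the $\calf(\Lambda,f)$ stratification, and the bookkeeping you flag as the ``main obstacle'' can be dispensed with entirely.
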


\begin{proof}
The proof is similar to that of Lemma \ref{Lintersecttheory}(a).
Note that
$\dim \Gamma \ge \dim
\bar S + \dim \Delta_\Lambda - \dim \bar\calh_g$.  By Lemma
\ref{lemdimtree}(b), this equals $g+f - \abs \Lambda = \dim
(\Delta_\Lambda[\bar \calh_g]^f)$.
\end{proof}

\subsection{Adjusting marked points and trees}\label{Smarked}

The next lemma shows that one can adjust the marked points of an
$r$-marked hyperelliptic curve of genus $g$ and $p$-rank $f$ without
leaving the irreducible component of $\bar\calh_{g;r}^f$ to which its
moduli point belongs.

\begin{lemma}
\label{lemlabeled}
Let $S$ be an irreducible component of $\calh_{g;r}^f$, and let $\bar
S$ be the closure of $S$ in $\bar\calh_{g;r}^f$.
Then $\bar S = \phi_{g;r}\inv(\phi_{g;r}(\bar S))$.  Equivalently, if $T$ is a
$k$-scheme, if $(C; P_1, \ldots, P_r) \in \bar S(T)$, and if $(Q_1, \ldots, Q_r)$ is
any other marking of $C$, then $(C; Q_1, \ldots, Q_r) \in \bar S(T)$.
\end{lemma}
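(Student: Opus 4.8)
The plan is to prove the equality $\bar S = \phi_{g;r}\inv(\phi_{g;r}(\bar S))$ by exploiting the fact that $\phi_{g;r}$ is a fibration with connected fibers, together with the observation that $\bar S$ is cut out inside $\bar\calh_{g;r}^f$ by its image downstairs. First I would record the easy containment: since $\bar S \subset \phi_{g;r}\inv(\phi_{g;r}(\bar S))$ always holds, only the reverse inclusion needs proof. The key structural input is that $\phi_{g;r}\colon \bar\calh_{g;r}\ra\bar\calh_g$ is proper, flat and surjective with connected fibers, and that the $p$-rank of an $r$-marked curve depends only on the underlying curve, so that $\bar\calh_{g;r}^f = \bar\calh_{g;r}\cross_{\bar\calh_g}\bar\calh_g^f$ (this was used already in Lemma \ref{lemhyplabeled}(b)). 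Consequently $\phi_{g;r}$ restricts to a proper flat surjection $\bar\calh_{g;r}^f \ra \bar\calh_g^f$ with connected fibers of constant dimension $r$.

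Next I would argue at the level of irreducible components. Let $T = \phi_{g;r}(\bar S)$, a closed irreducible substack of $\bar\calh_g^f$ (closed because $\phi_{g;r}$ is proper, irreducible because $\bar S$ is). Because $\phi_{g;r}$ is flat with irreducible fibers over the open dense locus where the fibers behave generically — more precisely, because a flat proper morphism with geometrically connected fibers between reduced stacks pulls an irreducible substack back to an irreducible substack when the fibers over it are irreducible — the preimage $\phi_{g;r}\inv(T)$ is irreducible of dimension $\dim T + r$. Now $\bar S$ is a closed irreducible substack of $\phi_{g;r}\inv(T)$ with $\dim\bar S = \dim S = (g-1+f)+r$ by the dimension count in Section \ref{subsecstrat} applied to $\calh_{g;r}^f$ (equivalently, $\dim S = \dim\phi_{g;r}(S) + r$ since $S$ dominates $\phi_{g;r}(S)$ with $r$-dimensional fibers), while $\dim T \le g-1+f$. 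Hence $\dim\bar S \ge \dim\phi_{g;r}\inv(T)$, and since $\bar S \subset \phi_{g;r}\inv(T)$ with both irreducible, they coincide. The "Equivalently" reformulation in terms of $T$-points then follows by unwinding: a $T$-point $(C;Q_1,\ldots,Q_r)$ lies in $\phi_{g;r}\inv(\phi_{g;r}(\bar S))(T)$ precisely when $(C;P_1,\ldots,P_r)\in\bar S(T)$ for some marking, which is the assertion.

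The main obstacle is justifying that the fibers of $\phi_{g;r}$ over the relevant locus are \emph{irreducible}, not merely connected — connectedness alone does not force the preimage of an irreducible substack to be irreducible. I would handle this either by noting that the fibers of $\phi_{g;r}$ are themselves (fibered powers of) the universal curve, hence irreducible since $\calc_g$ is irreducible over $\bar\calh_g$ away from the boundary and has irreducible generic fiber, or more robustly by a generic-flatness argument: over a dense open substack $U \subset T$ the fibers of $\phi_{g;r}$ are irreducible, so $\phi_{g;r}\inv(U)$ is irreducible, its closure is an irreducible substack of dimension $\dim T + r$ containing $\bar S$, and the dimension comparison closes the argument without needing control of the special fibers. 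A secondary technical point is that all of this takes place in the category of Deligne–Mumford stacks, but the cited fact that smooth proper stacks behave intersection-theoretically like schemes (\cite[p.\ 614]{V:stack}) — already invoked in Lemma \ref{Lintersecttheory} — together with standard flatness/dimension formulas for morphisms of stacks, suffices.
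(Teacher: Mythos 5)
Your proposal is correct but takes a different route from the paper's. You prove the lemma by a dimension count: you form $T=\phi_{g;r}(\bar S)$, argue $\phi_{g;r}\inv(T)$ is irreducible of dimension $\dim T + r$, and then compare with $\dim\bar S = (g-1+f)+r \geq \dim T + r$ to force equality. The paper instead works entirely over the smooth locus: it shows $\phi_{g;r}\inv(\phi_{g;r}(S)) = S$ directly, by noting that $\phi_{g;r}\inv(\phi_{g;r}(S))$ is irreducible (since over $\phi_{g;r}(S)\subset\calh_g^f$ the fibers are genuine configuration spaces of a \emph{smooth} irreducible curve, so are irreducible without further argument), that it contains $S$, and that $\bar S$ is the largest irreducible substack of $\bar\calh_{g;r}^f$ containing $S$; it then passes to $T$-points of $\bar S$ using Lemma \ref{lemhyplabeled}(b) and the fact that $\phi_{g;r}(\bar S) = \overline{\phi_{g;r}(S)}$. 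Your approach buys nothing extra here, and at the cost of the irreducibility issue you correctly flag: over $T$ (which includes boundary points) the fibers of $\phi_{g;r}$ need not be irreducible, so you must invoke the generic-flatness/openness repair to conclude $\phi_{g;r}\inv(T)$ is irreducible. The paper's proof avoids this entirely because it only ever looks at fibers over smooth curves, where irreducibility is automatic. Both proofs rely on openness of $\phi_{g;r}$ (flatness) when passing from $S$ to its closure, so that step is shared. Your proof also requires the equidimensionality statement $\dim S = (g-1+f)+r$ for components of $\calh_{g;r}^f$, which should be justified from the flatness of $\phi_{g;r}$ and \cite[Thm.\ 1]{GP:05} rather than taken as given, but this is a minor point.
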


\begin{proof}
It suffices to show that $\phi_{g;r}\inv(\phi_{g;r}(\bar S)) \subseteq \bar S$.  
Note that  $\bar S$ is the largest irreducible
substack of $\bar\calh_{g;r}^f$ which contains $S$. The fibers of
$\phi_{g;r}\rest{S}$ are irreducible, so
$\phi_{g;r}\inv(\phi_{g;r}(S))$ is also an irreducible substack of
$\bar
\calh_{g;r}^f$ which contains $S$.  Thus $\phi_{g;r}\inv(\phi_{g;r}(S))\subset
\bar S$. This shows that $\phi_{g;r}\inv(\phi_{g;r}(S)) = S$.  

To finish the proof, it suffices to show that the $T$-points of $\bar S$
and $\phi_{g;r}\inv(\phi_{g;r}(\bar S))$ coincide for an arbitrary $k$-scheme
$T$.  To this end, let $\alpha = (C; P_1, \ldots, P_r) \in \bar S(T)$,
and let $\beta = (C; Q_1, \ldots, Q_r) \in \bar\calh_{g;r}^f(T)$.
Note that $\phi_{g;r}(\beta) = \phi_{g;r}(\alpha)$, and
$\phi_{g;r}(\alpha)$ is supported in the closure of $\phi_{g;r}(S)$ in
$\bar\calh_g^f$.  
By Lemma \ref{lemhyplabeled}(b), $\calh_{g;r}$ is dense in $\bar\calh_{g;r}$.
It follows that $\beta$ is supported in the closure of $\phi_{g;r}\inv(\phi_{g;r}(S))$ in
$\bar\calh_{g;r}^f$, which is $\bar S$. 
\end{proof}

It is not clear whether one can change the labeling of the smooth ramification locus of a hyperelliptic curve
without changing the irreducible component of $\til \calh_g^f$ to which its moduli point belongs.  
To circumvent this issue, the following lemma about hyperelliptic curves of genus $2$ and $p$-rank $1$ will be useful.

\begin{lemma}
\label{lemswitchrampoint}
\begin{alphabetize}
\item First, $\bar \calh_2^1$ is irreducible and intersects $\kappa_{1,1}(\til
\calh_{1}^{1} \cross \til \calh_{1}^{0})$.
\item Second, let $\til S$ be an irreducible component of $\til\calh_2^1$.  If
$\til S$ intersects $\til\kappa_{1,1}(\til \calh_1^1 \cross \til \calh_1^0)$,
then $\til S$ also intersects $\til\kappa_{1,1}(\til \calh_1^0 \cross
\til \calh_1^1)$.
\end{alphabetize}
\end{lemma}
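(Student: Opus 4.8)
The plan is to establish part (a) using the known low-genus structure of $\calh_2$ and then deduce part (b) from part (a) by exploiting the fact that a single irreducible stack cannot break apart under pullback along the \'etale cover $\varpi_2$.

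\textbf{Part (a).} First I would recall that $\calh_2 = \calm_2$, since every curve of genus $2$ is hyperelliptic, so $\calh_2$ is irreducible of dimension $3$. The $p$-rank $1$ stratum $\calh_2^1$ is a locally closed substack of pure dimension $g-1+f = 2$ by \cite[Thm.\ 1]{GP:05}. To see that $\calh_2^1$, and hence its closure $\bar\calh_2^1$, is irreducible, I would invoke the special case of \cite[Prop.\ 4.4]{chailadic} cited in the introduction as the $g=2$, $f\ge 1$ base case; alternatively, one can argue via the fibration $\calh_2^1 \to \calA_1^1 \times \calA_1$ coming from the Torelli/boundary description, since the locus of genus-$2$ curves degenerating to a product is already well understood. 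Once irreducibility of $\bar\calh_2^1$ is in hand, I need to show it meets $\kappa_{1,1}(\til\calh_1^1 \times \til\calh_1^0)$. By \eqref{eqblrprank}, the image $\kappa_{1,1}(\til\calh_1^1\times\til\calh_1^0)$ consists of moduli points of $p$-rank $1$ curves, so it is a substack of $\Delta_1[\bar\calh_2]^1$; by Lemma \ref{lemdimh}(a) it has dimension $g-2+f = 1$, hence is a divisor in the $2$-dimensional stack $\bar\calh_2^1$. Since $\bar\calh_2^1$ is complete of dimension $2 > g-1 = 1$, Lemma \ref{LhypinterDelta0}(a) applies with $g=2$, $f=1$: $\bar\calh_2^1$ meets $\Delta_0[\bar\calh_2]$. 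But for $g=2$ we have $\Delta_0 = \Xi_0$, and $\Xi_0[\bar\calh_2]^1 = \kappa_1(\bar\calh_1^0)$ by \eqref{eqblrprank0}. To land instead in $\Delta_1$, I would use that $\calh_2$ contains no complete substack of positive dimension (e.g.\ \cite[Cor.\ 1.9]{yamaki04}), so $\bar\calh_2^1$ must also meet the boundary component $\Delta_1[\bar\calh_2]$; and $\Delta_1[\bar\calh_2]^1$ is exactly $\kappa_{1,1}(\til\calh_1^{f_1}\times\til\calh_1^{f_2})$ with $f_1+f_2 = 1$, i.e.\ $\kappa_{1,1}(\til\calh_1^1\times\til\calh_1^0)$ (using that $\kappa_{1,1}$ is symmetric in the two factors). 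This gives (a).

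\textbf{Part (b).} Now let $\til S$ be an irreducible component of $\til\calh_2^1$ meeting $\til\kappa_{1,1}(\til\calh_1^1\times\til\calh_1^0)$. The forgetful map $\varpi_2:\til\calh_2\to\bar\calh_2$ is finite \'etale and Galois with group $\sym(6)$, and it preserves the $p$-rank stratification, so $\varpi_2(\til S)$ is an irreducible component of $\calh_2^1$ — but by (a) there is only one, namely a dense open of $\bar\calh_2^1$. Hence $\varpi_2\inv(\calh_2^1)$ is a union of $\sym(6)$-conjugates of $\til S$, and the $\sym(6)$-action permutes the components of $\til\calh_2^1$ transitively. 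Inside $\til\kappa_{1,1}(\til\calh_1^1\times\til\calh_1^0)$, choosing which $6$ of the $6$ ramification labels of the genus-$2$ curve come from the first elliptic curve versus the second is governed by a coset of $\sym(3)\times\sym(3)$ in $\sym(6)$; swapping the two blocks of three labels is realized by an element $\sigma\in\sym(6)$ that carries $\til\kappa_{1,1}(\til\calh_1^1\times\til\calh_1^0)$ to $\til\kappa_{1,1}(\til\calh_1^0\times\til\calh_1^1)$. So I would argue: $\til S$ meets $\til\kappa_{1,1}(\til\calh_1^1\times\til\calh_1^0)$, hence $\sigma(\til S)$ meets $\til\kappa_{1,1}(\til\calh_1^0\times\til\calh_1^1)$; it remains to show $\sigma$ fixes $\til S$ as a component, for which I would show that $\til\calh_2^1$ is in fact irreducible. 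This last point follows because $\til S$, being a component meeting the relabeling-symmetric boundary piece, has closure containing a $\sym(6)$-stable dense locus — more cleanly, one observes that $\til\calh_2^1 \to \bar\calh_2^1$ is \'etale over an irreducible base, and the relevant monodromy (the image of $\pi_1$ of a dense open of $\bar\calh_2^1$ permuting the $6$ sheets, i.e.\ the labels) is transitive because the family of $6$ Weierstrass points over genus-$2$ curves of $p$-rank $1$ has full symmetric monodromy. That last transitivity is the crux.

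\textbf{Main obstacle.} The hard part is the monodromy/transitivity claim that makes $\til\calh_2^1$ irreducible (equivalently, that a single $\sym(6)$-orbit of components of $\til\calh_2^1$ contains a $\sigma$-related pair meeting the two sides of $\til\kappa_{1,1}$). Part (a) is essentially bookkeeping with the clutching maps plus the cited irreducibility of $\bar\calh_2^1$ and the completeness arguments of Lemma \ref{LhypinterDelta0} and \cite{yamaki04}. The subtlety in (b) is precisely the phenomenon flagged in the paragraph before the lemma: changing the labeling of the ramification locus might a priori move between components of $\til\calh_g^f$. I expect the cleanest route is to avoid proving full irreducibility of $\til\calh_2^1$ and instead argue directly that the boundary divisor $\til\kappa_{1,1}(\til\calh_1^1\times\til\calh_1^0)$ and its image $\til\kappa_{1,1}(\til\calh_1^0\times\til\calh_1^1)$ under the label-swap both lie in the closure of whatever single component $\til S$ of $\til\calh_2^1$ one starts with — using that these two divisors are $\sym(2)$-translates inside one connected boundary stratum and that $\til S$, having $\dim \til S = 2 > 1 = \dim$(boundary), picks up an entire boundary component by a Lemma \ref{Lintersecttheory}(b)-style intersection argument, together with the fact that the normalization of the relevant boundary locus in $\til\calh_2$ is connected.
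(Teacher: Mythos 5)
Your outline for part (a) matches the paper's up to the point of proving irreducibility of $\bar\calh_2^1$ (the paper does this via the Torelli embedding $\calh_2\inject\cala_2$ and irreducibility of $\cala_2^1$, essentially equivalent to the route you sketch), but your argument for why $\bar\calh_2^1$ actually meets $\Delta_1[\bar\calh_2]$ has a gap. You apply Lemma \ref{LhypinterDelta0}(a) to see $\bar\calh_2^1$ meets $\Delta_0=\Xi_0$, then assert that the completeness result \cite[Cor.\ 1.9]{yamaki04} forces it to also meet $\Delta_1$. That inference does not follow: a complete curve in $\bar\calh_2$ could in principle stay inside $\Xi_0\cup\calh_2$. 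The paper sidesteps this entirely by writing down an explicit degenerate curve $Y$ (an ordinary and a supersingular elliptic curve glued at a fixed point of each hyperelliptic involution), noting directly that its moduli point lies in $\kappa_{1,1}(\til\calh_1^1\times\til\calh_1^0)\cap\bar\calh_2^1$. Since $\bar\calh_2^1$ is already shown to be the whole $p$-rank $\le 1$ closure, no intersection-theoretic maneuver is needed.

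For part (b), your primary strategy — showing $\til\calh_2^1$ is irreducible by proving the monodromy on the six Weierstrass labels is all of $\sym(6)$ over the $p$-rank $1$ locus — is exactly the claim the paper declines to prove, and indeed the remark following the lemma explicitly warns that $\til\calh_2^1$ could a priori have up to $6!$ components; no proof of this transitivity is attempted or needed. Your ``Main obstacle'' paragraph gestures at the right alternative (degenerate further and use a Lemma \ref{Lintersecttheory}(b)-style intersection argument), but the crucial move is missing. The paper's mechanism is: since any nonisotrivial proper family of genus-one curves has supersingular fibers, every component of $\til\calh_1^1$ has a component of $\til\calh_1^0$ in its closure; hence $\til S$, which contains a component of $\til\kappa_{1,1}(\til\calh_1^1\times\til\calh_1^0)$, has closure $S^*$ containing a component of $\til\kappa_{1,1}(\til\calh_1^0\times\til\calh_1^0)$. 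This ``corner'' stratum lies simultaneously in the closure of $\til\kappa_{1,1}(\til\calh_1^0\times\til\calh_1^1)$, so $S^*$ touches that divisor, and a second application of Lemma \ref{Lintersecttheory}(b) shows $S^*$ (hence $\til S$) contains a whole component of it. Your appeal to ``connectedness of the normalization of the boundary locus'' is not established and is not the mechanism used; the key fact you are missing is the specialization $\til\calh_1^1\rightsquigarrow\til\calh_1^0$ that lets you pass through the common degeneration $\til\kappa_{1,1}(\til\calh_1^0\times\til\calh_1^0)$.
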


\begin{proof} 
For part (a), recall that the Torelli morphism $\calh_2 \ra \cala_2$ is an inclusion \cite[Lemma
1.11]{oortsteenbrink}.  Now $\dim (\calh_2^1) =\dim(\cala_2^1)$ and
$\cala_2^1$ is irreducible (e.g., \cite[Ex.\ 11.6]{evdg}).
It follows that $\calh_2^1$ is irreducible and thus $\bar \calh_2^1$ is irreducible by Lemma \ref{lemhyplabeled}(a).  Consider a chain $Y$ of
two elliptic curves, one ordinary and one supersingular, intersecting in an ordinary double point, which is a
fixed point of the hyperelliptic involution on each elliptic curve.
The moduli point of $Y$ is in the intersection of $\kappa_{1,
1}(\til \calh_{1}^{1} \cross \til \calh_{1}^{0})$ and $\bar \calh_2^1$.

For part (b), let $S^*$ be the closure of $\til S$ in $\til\calh_2$.
By hypothesis and Lemma \ref{Lintersecttheory}(b), $\til S$ contains a component of
$\til\kappa_{1,1}(\til \calh_1^1 \cross \til \calh_1^0)$.  Every
component of $\til H_1^1$ contains a component of $\til \calh_1^0$ in
its closure since any nonisotrivial proper family of curves of genus
one has supersingular fibers.  It follows that $S^*$ contains a
component of $\til \kappa_{1,1}(\til \calh_1^0 \cross \til
\calh_1^0)$, and thus intersects the closure of a component of $\til
\kappa_{1,1}(\til \calh_1^0 \cross \til \calh_1^1)$.  By Lemma
\ref{Lintersecttheory}(b), $S^*$ contains a component of $\til
\kappa_{1,1}(\til \calh_1^0\cross \til \calh_1^1)$, which then implies
the same for $\til S$.
\end{proof}

\begin{remark}
Note that a genus two curve has six ramification points and thus there are potentially up to
$6!= \abs{\sym(6)}$ irreducible components of $\til \calh_2^1$. 
In particular, the fact from Lemma \ref{lemswitchrampoint}(a) that $\bar \calh_2^1$ 
intersects $\kappa_{1,1}(\til \calh_{1}^{1} \cross \til \calh_{1}^{0})$ 
does not imply the hypothesis in part (b)
that $\til S$ intersects $\til\kappa_{1,1}(\til \calh_1^1 \cross \til \calh_1^0)$.
\end{remark}

\begin{lemma}
\label{lemswitchtree}
Let $S$ be an irreducible component of $\calh_g^f$.  Suppose $\Lambda$
is a clutching tree of elliptic curves with $g(\Lambda) = g$.
If $\bar S$ intersects
$\Delta_\Lambda[\bar \calh_g]$, then for any choice of $\st{f_v} \in \calf(\Lambda,f)$,
$\bar S$ contains an irreducible component of
$\kappa_\Lambda( \cross_{v \in \Lambda} \til \calh_{g_v}^{f_v} )$.
\end{lemma}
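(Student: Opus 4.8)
The plan is to reduce the statement to the genus-one building blocks by induction on the number of vertices $\abs{\Lambda}$, using the two already-established facts that (i) if $\bar S$ meets a component of $\Delta_\Lambda[\bar\calh_g]^f$ then it contains it (Lemma \ref{lemtouchcontain}), and (ii) an irreducible proper family of genus-one curves always degenerates to a supersingular fiber, so every component of $\til\calh_1^1$ contains a component of $\til\calh_1^0$ in its closure. Since $\Lambda$ is a clutching tree of elliptic curves, $g_v = 1$ for every $v$, and $\calf(\Lambda,f)$ is exactly the set of ways to mark $f$ of the $g$ vertices as ``ordinary'' and the remaining $g-f$ as ``supersingular.'' The goal is: given that $\bar S$ meets $\Delta_\Lambda[\bar\calh_g]$ at all, it meets (hence, by Lemma \ref{lemtouchcontain}, contains a component of) $\kappa_\Lambda(\cross_v \til\calh_1^{f_v})$ for \emph{every} labeling $\st{f_v}\in\calf(\Lambda,f)$.

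First I would observe that, since $\bar S$ is irreducible and meets the divisor-type locus $\Delta_\Lambda[\bar\calh_g]$, it contains some component $\Gamma$ of $\Delta_\Lambda[\bar\calh_g]^{f'}$ for some $f' \le f$; but a component of $\calh_g^f$ has dimension $g-1+f$ and, by Lemma \ref{lemdimtree}(b), $\dim\Delta_\Lambda[\bar\calh_g]^{f'} = g+f'-\abs\Lambda$, so the intersection-theoretic bound forces $f' = f$ (this is the same dimension count used in Lemma \ref{lemtouchcontain}). Thus $\bar S$ contains a component of $\kappa_\Lambda(\cross_v\til\calh_1^{f_v})$ for \emph{at least one} $\st{f_v}\in\calf(\Lambda,f)$. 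It then suffices to show that, within $\bar S$, one can pass from any one such labeling to any adjacent one — that is, swap the labels on a single vertex $v$ (from supersingular to ordinary) while simultaneously swapping them on a single other vertex $w$ (from ordinary to supersingular), keeping the total $p$-rank equal to $f$. The key local move is Lemma \ref{lemswitchrampoint}: if we group the two vertices $v,w$ together with the rest of $\Lambda$ into a genus-$2$ sub-clutching along the edge path between them, the fact that $\bar\calh_2^1$ is irreducible and that its boundary stratum $\kappa_{1,1}(\til\calh_1^1\times\til\calh_1^0)$ and $\kappa_{1,1}(\til\calh_1^0\times\til\calh_1^1)$ lie in the closure of the same (unique) component lets us interchange which of the two vertices carries the ordinary label.

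More precisely, the inductive step would go as follows. Pick any two labelings $\st{f_v}, \st{f_v'}\in\calf(\Lambda,f)$; since both sum to $f$, they differ by a sequence of elementary transpositions, so it is enough to handle the case where $f_v \ne f_v'$ at exactly two vertices, say $f_a = 1, f_b = 0$ but $f_a' = 0, f_b' = 1$. Contract $\Lambda$ along every edge not on the path from $a$ to $b$... no — rather, refine the picture so that $a$ and $b$ become the two endpoints of a chain of elliptic curves and group the intervening vertices into a single genus-$(g-2)$ vertex; using the factorization of $\kappa_\Lambda$ through coarser clutching trees (the ``$\Delta_\Lambda\subset\Delta_{\Lambda'}$'' construction) and the fact that each $\til\calh_1^1$ degenerates to $\til\calh_1^0$, reduce to a statement about a chain of two elliptic curves inside $\bar\calh_2^1$ times a fixed factor, where Lemma \ref{lemswitchrampoint}(b) applies. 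This, combined with Lemma \ref{lemtouchcontain} applied back at the level of $\Lambda$, yields that $\bar S$ contains a component of $\kappa_\Lambda(\cross_v\til\calh_1^{f_v'})$.

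The main obstacle I anticipate is the bookkeeping around the \emph{labeling of the smooth ramification locus} on $\til\calh_g$: the clutching maps $\kappa_\Lambda$ involve arbitrary choices of gluing sections and relabelings, and passing between a component of $\til\calh_g^f$ and its image in $\bar\calh_g^f$ is where things can go wrong (this is precisely why Lemma \ref{lemswitchrampoint} is stated the way it is — $\bar\calh_2^1$ being irreducible does not automatically say the relevant component of $\til\calh_2^1$ meets both orderings of the two elliptic factors). The delicate point is therefore to run the swap at the level of \emph{unlabeled} curves in $\bar\calh_g$, where $\bar\calh_2^1$'s irreducibility is available, and only afterward invoke that $\varpi_g$ is finite and $p$-rank-preserving to transport the conclusion; keeping track of which components of $\cross_v\til\calh_1^{f_v}$ map into $\bar S$, as opposed to merely meeting its preimage, is the part that requires care. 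I expect the argument otherwise to be a routine induction on $\abs\Lambda$ together with repeated application of Lemmas \ref{lemtouchcontain}, \ref{lemswitchrampoint}, and the genus-one degeneration fact.
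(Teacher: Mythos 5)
Your high-level strategy agrees with the paper's: use the dimension count (Lemma \ref{lemtouchcontain} together with Lemma \ref{lemdimtree}) to get one labeling $\st{f_v^*}\in\calf(\Lambda,f)$ realized inside $\bar S$, then transport between labelings by a sequence of pairwise swaps, each carried out by coalescing a pair of genus-one vertices into a genus-two vertex and invoking Lemma \ref{lemswitchrampoint}(b). That is exactly what the paper does, and your identification of Lemmas \ref{lemtouchcontain} and \ref{lemswitchrampoint} as the two load-bearing inputs is right.

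The part that goes wrong is the reduction to a configuration where Lemma \ref{lemswitchrampoint} actually applies. The lemma is a statement about a genus-$2$ vertex that is the coalescence of two \emph{adjacent} genus-one vertices, so the elementary move you are allowed to perform is a swap of labels across a single edge of $\Lambda$. You need two things: (1) the labelings in $\calf(\Lambda,f)$ are connected under swaps of $0$ and $1$ across edges of the tree (so one can get from $\st{f_v^*}$ to any $\st{f_v}$ by adjacent transpositions); and (2) each such adjacent swap is realized inside $\bar S$ by coalescing the two adjacent vertices $v_1,v_2$ into a single genus-$2$ vertex $v_{12}$, applying Lemma \ref{lemtouchcontain} to find a component $\til T_{v_{12}}\subset\til\calh_2^1$ contained in $\bar S$, and applying Lemma \ref{lemswitchrampoint}(b). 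Your proposal instead coalesces \emph{everything other than $a$ and $b$} into a single genus-$(g-2)$ vertex, yielding a path $a - c - b$ with $a,b$ non-adjacent; that configuration does not set up Lemma \ref{lemswitchrampoint} (there is no genus-$2$ vertex in sight, and $a,b$ cannot be coalesced since they are not adjacent). Your first, retracted instinct — contract so that the swap happens across a single edge — was the correct one. Also, the degeneration fact ``every component of $\til\calh_1^1$ has a component of $\til\calh_1^0$ in its closure'' is already consumed inside the proof of Lemma \ref{lemswitchrampoint}(b); it need not and should not be invoked again as a separate ingredient here, and listing it as such suggests the roles of the two lemmas have been partially conflated.
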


\begin{proof}
By Lemma \ref{lemdimtree}(a), for some choice of data
$\st{f_v^{*}} \in \calf(\Lambda,f)$,
the intersection of $\bar S$ and $\Delta_\Lambda[\bar \calh_g]$
contains a point of $\kappa_\Lambda(\cross_{v\in\Lambda} \til \calh_{g_v}^{f_v^{*}})$.
By Lemma \ref{lemtouchcontain}, there are components $\til T_v \subset
\til \calh_{g_v}^{f_v^{*}}$ such that $\bar S$ contains 
$\kappa_\Lambda( \cross_{v\in\Lambda} \til T_v)$.
One immediately reduces to the case in which 
$v_1$ and $v_2$ are adjacent vertices in $\Lambda$ with $f^*_{v_1} =
1$ and $f^*_{v_2} = 0$, and $\st{f_v} \in \calf(\Lambda,f)$ is given
by 
\[
f_v = 
\begin{cases}
f^{*}_v & v\not \in\st{v_1,v_2} \\
1 - f^{*}_v & v \in \st{v_1,v_2}.
\end{cases}
\]
Let $\Lambda'$ be the tree obtained by identifying
$v_1$ and $v_2$ in a new vertex $v_{12}$ with $g_{v_{12}} = 2$.  
By Lemma \ref{lemtouchcontain}, there is a component
$\til T_{v_{12}} \subset \til\calh_2^1$ such that $\bar S$ contains
\[
\kappa_{\Lambda'}( \til
T_{v_{12}} \cross (\cross_{v\in \Lambda', v\not = v_{12}} \til T_v) ).
\]
Now, $\til T_{v_{12}}$ contains a component of $\til\kappa_{1,1}(\til
\calh_1^{1} 
\cross \calh_1^{0})$.  By Lemma
  \ref{lemswitchrampoint}(b), $\til T_{v_{12}}$ contains a component
of $\til
  \kappa_{1,1}(\til \calh_1^{0} \cross \til \calh_1^{1})$
  as well.  Then $\bar S$ contains a component of
$\kappa_\Lambda(\cross_{v\in\Lambda} \til \calh_{g_v}^{f_v})$.
\end{proof}

\subsection{Main intersection result}

In this section, we prove that the closure of each irreducible component $S$ of $\calh_g^f$ 
contains the moduli point of a singular curve which is a tree of elliptic curves and has $p$-rank $f$.

\begin{theorem}
\label{thdegentree}
Suppose $g \ge 2$ and $0 \le f \le g$.  Let $S$ be an irreducible
component of $\calh_g^f$. 
\begin{alphabetize}
\item There exists a clutching tree of elliptic curves $\Lambda$ with
  $g(\Lambda) =  g$ such that $\bar S$ contains an irreducible
  component of  $\Delta_\Lambda[\bar \calh_g]^f$.
\item For any choice of $\st{f_v} \in \calf(\Lambda,f)$,
$\bar S$ contains an irreducible component of
$\kappa_\Lambda( \cross_{v \in \Lambda} \til \calh_{g_v}^{f_v} )$.
\item In particular, $\bar S$ contains the moduli point of some tree of
  elliptic curves, of which $f$ are ordinary and $g-f$ are supersingular.
\end{alphabetize}
\end{theorem}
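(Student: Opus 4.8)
The plan is to prove Theorem \ref{thdegentree} by induction on the genus $g$, using the degeneration machinery from Sections \ref{Sdegen}--\ref{Smarked} to reduce the genus while keeping track of the $p$-rank. Observe first that part (c) is an immediate consequence of part (b): take any tree of elliptic curves $\Lambda$ furnished by part (a), choose $\st{f_v}\in\calf(\Lambda,f)$ with each $f_v\in\st{0,1}$ (possible since $\sum_v f_v = f$ and $\abs\Lambda = g$), and apply part (b); the generic point of $\kappa_\Lambda(\cross_v \til\calh_{g_v}^{f_v})$ is a tree of $g$ elliptic curves of which $f$ are ordinary ($f_v = 1$) and $g-f$ are supersingular ($f_v = 0$), but since a one-dimensional ordinary family degenerates to supersingular we in fact get a moduli point where the ordinary components are elliptic curves and the rest are supersingular. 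Similarly, part (b) follows from part (a) together with Lemma \ref{lemswitchtree}: once $\bar S$ contains a component of $\Delta_\Lambda[\bar\calh_g]^f$ for \emph{some} choice of $p$-ranks on the vertices, Lemma \ref{lemswitchtree} lets us redistribute the $p$-ranks arbitrarily within $\calf(\Lambda,f)$. So the whole theorem reduces to part (a).

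For part (a), the base case is $g = 2$. Here $\calh_2 = \calm_2$, and the boundary stratum $\Delta_1[\bar\calh_2] = \kappa_{1,1}(\til\calh_1\cross\til\calh_1)$ consists of chains of two elliptic curves; the clutching tree $\Lambda$ is the single edge on two vertices with $g_{v_1} = g_{v_2} = 1$. I would split into cases on $f\in\st{0,1,2}$. For $f = 2$ (ordinary), $\calh_2^2$ is open dense and its closure meets $\Delta_1$ with both components ordinary by a dimension count (Lemma \ref{Lintersecttheory}(a) plus Lemma \ref{lemdimh}(a)). For $f = 1$, Lemma \ref{lemswitchrampoint}(a) directly says $\bar\calh_2^1$ is irreducible and meets $\kappa_{1,1}(\til\calh_1^1\cross\til\calh_1^0)$, which is exactly the assertion. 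For $f = 0$, we invoke Lemma \ref{LhypinterDelta0}(c), which gives that $\bar S$ contains a component of $\kappa_{1,1}(\til\calh_1^0\cross\til\calh_1^0)$. In each case the relevant $\Lambda$ is the $2$-vertex tree, so part (a) holds for $g = 2$.

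For the inductive step, suppose $g\ge 3$ and the theorem holds for all smaller genera. Let $S$ be a component of $\calh_g^f$. The strategy is to find a boundary stratum of $\bar S$ that is a clutched product of lower-genus $p$-rank strata, apply the inductive hypothesis to each factor to produce clutching trees of elliptic curves there, and then assemble these into a single clutching tree $\Lambda$ with $g(\Lambda) = g$. Concretely: if $f\ge 1$, Lemma \ref{LhypinterDelta0}(b) shows each component of $\Delta_0[\bar S]$ contains either $\kappa_{g-1}(\bar\calh_{g-1;1}^{f-1})$ or $\lambda_{i,g-1-i}(\bar\calh_{i;1}^{f_1}\cross\bar\calh_{g-1-i;1}^{f_2})$ with $f_1 + f_2 = f-1$; if $f = 0$, Lemma \ref{LhypinterDelta0}(c) shows $\bar S$ contains $\kappa_{i,g-i}(\til\calh_i^0\cross\til\calh_{g-i}^0)$ for some $1\le i\le g-1$. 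In the $\kappa_{i,g-i}$ and $\lambda$ cases both factors have genus $< g$, so the inductive hypothesis applies to each (after using Lemma \ref{lemlabeled} to ignore the marked point on the $\bar\calh_{\bullet;1}$ factors, and after passing through $\varpi$ to relate $\til\calh$ and $\bar\calh$ components — here one needs that the $p$-rank stratification is preserved under $\varpi$, which it is), giving clutching trees of elliptic curves whose union (joined by the clutching edge, or by the two nodes of $\lambda$) forms the desired $\Lambda$; one checks $g(\Lambda) = g$ and that the $p$-ranks add up correctly using \eqref{eqblrprank}, \eqref{eqblrprank0}, \eqref{eqblrpranklambda}, then invokes Lemma \ref{lemtouchcontain} to conclude $\bar S$ contains a component of $\Delta_\Lambda[\bar\calh_g]^f$. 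The one genuinely delicate case is $\kappa_{g-1}$: here the factor is $\bar\calh_{g-1;1}^{f-1}$, of genus $g-1 < g$, so induction again applies to produce a tree of $g-1$ elliptic curves; but the clutching map $\kappa_{g-1}$ identifies a point with its hyperelliptic conjugate, creating a \emph{loop}, not a tree — so the resulting curve is not of compact type and $\Delta_\Lambda$ is not directly the right target. I expect this to be the main obstacle: one must argue that within the relevant component one can \emph{further degenerate} off of $\Xi_0$ into a compact-type locus, i.e.\ degenerate the genus-$(g-1)$ piece so that the loop breaks. The cleanest route is probably to observe that the component of $\bar\calh_{g-1;1}^{f-1}$ we land in, when we apply the inductive hypothesis and Lemma \ref{lemswitchtree}, can be arranged so that the marked point $P$ and the node it creates lie on a \emph{terminal} elliptic component of the tree, and then a separate small lemma (of the same flavor as Lemma \ref{lemswitchrampoint}, handling genus-$1$ pieces with a self-node, i.e.\ comparing $\Xi_0[\bar\calh_2]$-type degenerations against genuine trees) lets one replace that looped configuration by an honest additional elliptic tail, restoring compact type while preserving the $p$-rank via \eqref{eqblrprank0}. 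Assembling these reductions carefully — and checking dimensions at each stage so that Lemma \ref{lemtouchcontain} applies — is where the real work lies; everything else is bookkeeping with the clutching formulas.
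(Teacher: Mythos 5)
Your overall plan — induction on $g$, reducing everything to a statement about $\bar S$ meeting the boundary — is the right shape, and your reductions of (c) to (b) and (b) to (a) via Lemma \ref{lemswitchtree}, as well as the base case $g=2$, match the paper. But there are two concrete problems in the inductive step.

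First, your treatment of the $\lambda_{i,g-1-i}$ case is wrong as stated: you claim the union of the two lower-genus trees, ``joined by the two nodes of $\lambda$,'' forms the desired clutching tree $\Lambda$. But two trees joined along \emph{two} distinct edges produce a dual graph with a cycle, so the resulting curve is not of compact type and its moduli point does not lie in any $\Delta_\Lambda$. Equation \eqref{eqblr0b} makes this explicit: $\pic^0$ of such a curve has a nontrivial toric part. So the $\Xi_i$ case for $1\le i\le g-2$ has exactly the same ``loop'' difficulty you correctly flag for $\Xi_0$, and the direct assembly you propose does not work for it.

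Second, your fix for the $\Xi_0$ ($\kappa_{g-1}$) case is the wrong tool and leaves a genuine gap. You propose a new small lemma ``comparing $\Xi_0[\bar\calh_2]$-type degenerations against genuine trees'' so as to replace the self-noded genus-one component by an honest elliptic tail, i.e.\ to literally restore compact type inside the same component. The paper does not do this and does not need such a lemma. Its mechanism is different and cleaner: it first isolates a ``Claim'' that the whole theorem follows once one shows merely that $\bar S$ meets \emph{some} $\Delta_i[\bar\calh_g]^f$ with $1\le i\le g-1$ (because then Lemma \ref{Lintersecttheory}(a) gives containment of a full component of $\Delta_i$, and the inductive hypothesis applied to each of the two factors of that $\kappa_{i,g-i}$-clutching builds the tree). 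The point of the $\Xi_0$ and $\Xi_i$ cases is then \emph{not} to produce a compact-type curve directly, but to place the marked point(s) judiciously on the degenerated lower-genus tree so that the resulting (non-compact-type) curve has a terminal elliptic component, and hence its moduli point lands in $\Delta_1[\bar\calh_g]^f$. That single observation — the degenerated curve lies in $\Delta_1$ — discharges both $\Xi_0$ and all the $\Xi_i$ in one stroke via the Claim, with no auxiliary lemma about genus-two self-nodes. Your draft never formulates this Claim, and without it (or a correct substitute for it) the non-compact-type cases do not close.

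A smaller point: for (c) from (b), the added step about degenerating a one-dimensional ordinary family to a supersingular fiber is unnecessary — part (b) already says $\bar S$ contains a component of $\kappa_\Lambda(\cross_v \til\calh_{g_v}^{f_v})$ with each $f_v\in\{0,1\}$, and any point of that locus is a tree of $g$ elliptic curves with $f$ ordinary and $g-f$ supersingular components. Harmless, but it suggests a slight misreading of what part (b) delivers.
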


\begin{proof}
It suffices to prove part (c) since parts (a) and (c) are equivalent by Lemma \ref{lemtouchcontain}
and since parts (a) and (b) are equivalent by Lemma
\ref{lemswitchtree}.

First suppose $g=2$.  If $f=2$, then $\calh_2^2$ is irreducible and affine 
and $\bar \calh_2^2$ contains the moduli point of a tree of $2$ ordinary elliptic curves.
If $f=1$ (resp.\ $f=0$), the result is true by Lemma \ref{lemswitchrampoint}(a) (resp.\ Lemma \ref{LhypinterDelta0}(c)).
Now suppose $g \geq 3$ and $0 \leq f \leq g$ and suppose as an inductive hypothesis that the result is true when 
$2 \leq g' < g$.
Let $S$ be an irreducible component of $\calh_g^f$.

\begin{claim} \label{claim}
To complete the proof, it suffices to show that $\bar S$ intersects
$\Delta_{i}[\bar \calh_g]^f$ for some $1 \leq i \le g-1$. 
\end{claim}

\begin{proof}[Proof of claim]

Suppose $\bar S$ intersects $\Delta_{i}[\bar \calh_g]^f$ for some $1
\leq i \le g-1$.  
Let $g_1=i$ and $g_2=g-i$.
By Lemma \ref{Lintersecttheory}(a), $\bar S$ contains a component of $\Delta_{g_1}[\bar \calh_g]^f$.
In other words, there exist a pair $(f_1, f_2)$ satisfying \eqref{f1f2conditions}
and, for $j=1,2$, components $\til V_j$ of $\til \calh_{g_j}^{f_j}$ 
such that $\bar S$ contains $\kappa_{g_1,g_2}(\til V_1 \cross \til V_2)$.
Then $\bar V_j= \varpi_{g_j} (\til V_j)$ is a component of $\bar \calh_{g_j}^{f_j}$.

By the inductive hypothesis, $\bar V_j$ contains the moduli point
$s_j$ of a tree $Y_j$ of $g_j$ elliptic curves, of which $f_j$ are 
ordinary and $g_j-f_j$ are supersingular.  Let $\Lambda_j$ be the dual
graph of $Y_j$.  Let $\til s_j \in \til V_j$
be such that $\varpi_{g_j}(\til s_j)=s_j$.  In other words, $\til s_j$ is
the moduli point of $Y_j$ along with the data of a choice of labeling
of the smooth ramification locus.  Then $\kappa_{g_1,g_2}(\til s_1,
\til s_2)$ is the moduli point of a curve $C$ whose dual graph is
obtained by connecting a vertex of $\Lambda_1$ with a vertex of
$\Lambda_2$.  Since $C$ is a tree, $\kappa_{g_1,g_2}(\til s_1, \til
s_2)$ is the moduli point of a tree of $g$ elliptic curves, of which
$f=f_1+f_2$ are ordinary and $g-f$ are supersingular.  This completes
the proof of the claim since $\kappa_{g_1,g_2}(\til s_1, \til s_2)$ is
in $\bar S$.
\end{proof}

Continuing the proof of Theorem \ref{thdegentree}, first suppose $f=0$.
By Lemma \ref{LhypinterDelta0}(c), $\bar S$ intersects $\Delta_i[\bar \calh_g]^f$ for
some $1 \leq i \leq g-1$.  By Claim \ref{claim}, this completes the proof when $f=0$.

Now suppose $f >0$.  By Lemma \ref{LhypinterDelta0}(a), $\bar S$ intersects $\Delta_0[\bar \calh_g]^f$.

\medskip

{\bf Case (i): $\bar S$ intersects $\Xi_0$.}

By Lemma \ref{LhypinterDelta0}(b), $\bar S$ contains the image of a
component $\bar V'$ of $\bar \calh_{g-1;1}^{f-1}$ under
$\kappa_{g-1}$.  Consider $\bar V = \phi_{g-1;1}(\bar V')$ which is a
component of $\bar \calh_{g-1}^{f-1}$.  By the inductive hypothesis,
$\bar V$ contains the moduli point of a curve $Y_1$ which is a tree of
$g-1$ elliptic curves, of which $f-1$ are ordinary and $g-f$ are
supersingular.  Let $E$ be a terminal component of $Y_1$ and let
$Y_1'$ be the closure of $Y_1-E$ in $Y_1$.  Let $R$ be the point of
intersection of $E$ and $Y_1'$.  Since the quotient of $Y_1$ by the
hyperelliptic involution $\iota$ has genus $0$, the elliptic curve $E$
is stabilized by $\iota$.  Let $P \not = R$ be a point of $E$ which is
not a ramification point of $\iota$.  By Lemma \ref{lemlabeled}, the
moduli point $t$ of $(Y_1;P)$ is in $\bar V'$.

Let $Z$ be the singular irreducible hyperelliptic curve of genus two with exactly one ordinary double point $P'$
such that the normalization of $Z$ is the elliptic curve $E$ and the pre-image of $P'$ consists of the points 
$P$ and $\iota(P)$.
In other words, the moduli point of $Z$ is the image of the moduli point of $(E;P)$ under $\kappa_1$.

Consider the point $s=\kappa_{g-1}(t)$ of $\bar S$.  
The curve $\calc_{g,s}$ has components $Z$ and $Y_1'$ which intersect in exactly one ordinary double point, $R$.
The $p$-rank of $\calc_{g,s}$ is $f(E)+f(Y_1')+1=f$. 
Since $g \geq 3$, there is a terminal component of $Y_1'$ not containing $R$ which is an elliptic curve.
Thus $s \in \Delta_1[\bar \calh_g]^f$.  (In fact, $s$ is also in $\Delta_2[\bar \calh_g]^f$ because of the component $Z$.)
By Claim \ref{claim}, this completes Case (i).

\medskip

{\bf Case (ii): $\bar S$ intersects $\Xi_i$ for some $1 \leq i \leq g-2$.}

Let $g_1=i$ and $g_2=g-1-i$.  By Lemma \ref{LhypinterDelta0}(b), $\bar
S$ contains a component of $\Xi_i[\bar \calh_g]^f$.  In other words,
there exists a pair $(f_1, f_2)$ satisfying \eqref{Xif1f2conditions}
and, for $j=1,2$, there exist components $\bar V'_j$ of $\bar
\calh_{g_j;1}^{f_j}$ such that $\bar S$ contains $\lambda_{g_1,
g_2}(\bar V'_1 \cross \bar V'_2)$.  Then $\bar V_j= \phi_{g_j;1} (\bar
V'_j)$ is a component of $\bar \calh_{g_j}^{f_j}$.

By the inductive hypothesis, $\bar V_j$ contains the moduli point
$s_j$ of a tree $Y_j$ of $g_j$ elliptic curves, of which $f_j$ are
ordinary and $g_j-f_j$ are supersingular.  Let $E_j$ be a terminal
component of $Y_j$ and let $Y_j'$ be the closure of $Y_j-E_j$ in
$Y_j$.  Let $R_j$ be the point of intersection of $E_j$ and $Y_j'$.
Since the quotient of $Y_j$ by the hyperelliptic involution $\iota_j$
has genus $0$, the elliptic curve $E_j$ is stabilized by $\iota_j$.
Let $P_1 \not = R_1$ be a point of $E_1$ which is a ramification point
of $\iota_1$.  Let $P_2 \not = R_2$ be a point of $E_2$ which is not a
ramification point of $\iota_2$.

By Lemma \ref{lemlabeled}, the moduli point $s'_j$ of $(Y_j;P_j)$ is
in $\bar V'_j$.  Consider $s=\lambda_{g_1, g_2}(s'_1,s'_2)$ which is a
point of $\bar S$.  By Section \ref{Slambda}, the components of the
stable model $\calc_{g,s}$ are the strict transforms of $Y_1$ and $Y_2$ and an
exceptional component $W$ which is a projective
line.  Moreover, $Y_1$ 
intersects $W$ in an ordinary double point and $Y_2$ intersects $W$ in
two other points, which are also ordinary double points.  The $p$-rank
of $\calc_{g,s}$ is $f(Y_1)+f(Y_2)+1=f$.

The curve $\calc_{g,s}$ has a terminal component $E'_1$ of genus $1$.
To see this, when $i=1$ let $E'_1=E_1$, and when
$i > 1$ let $E'_1 \not = E_1$ be another terminal component of $Y_1$.  
It follows that $s$ is in $\Delta_1[\bar \calh_g]^f$.  
(Also $s$ is in $\Delta_{i}[\calh_g]^f$ because of the component $Y_1$.)
By Claim \ref{claim}, this completes Case (ii).
\end{proof}

\subsection{Three corollaries}

Here are several consequences of Theorem \ref{thdegentree} which will
be used later in the paper.  

In the setting of Theorem \ref{thdegentree}, one can deduce that
$\bar S$ intersects $\Delta_i$ nontrivially only when $\Lambda$ has an
edge whose removal yields two trees of size $i$ and
$g-i$.  This is only guaranteed when $i=1$.
Luckily, the following information on degeneration to $\Delta_1$
is sufficient for the later applications in the paper.

\begin{corollary}
\label{cornewdegen1}
Suppose $g \ge 2$ and $0 \le f \le g$.  Let $S$ be an irreducible
component of $\calh_g^f$.  Then $\bar S$ intersects $\Delta_1[\bar \calh_g]^f$.  Furthermore:
\begin{alphabetize}
\item if $f \le g-1$, then $\bar S$ contains an irreducible component
  of $\kappa_{1,g-1}(\til \calh_1^0 \cross \til \calh_{g-1}^f)$; and
\item if $f \ge 1$, then $\bar S$ contains an irreducible component of
  $\kappa_{1,g-1}(\til \calh_1^1 \cross \til \calh_{g-1}^{f-1})$.
\end{alphabetize}
\end{corollary}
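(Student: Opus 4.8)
The plan is to deduce this corollary from Theorem \ref{thdegentree} by extracting, from the tree $\Lambda$ of elliptic curves furnished there, a single terminal vertex whose removal exhibits the required degeneration to $\Delta_1$. Concretely, by Theorem \ref{thdegentree}(a) there is a clutching tree $\Lambda$ of elliptic curves with $g(\Lambda)=g$ and an irreducible component $\Gamma$ of $\Delta_\Lambda[\bar\calh_g]^f$ contained in $\bar S$. Pick a leaf $v_0$ of $\Lambda$, let $v_1$ be its unique neighbor, and let $\Lambda''$ be the tree on two vertices $w_1,w_2$ with $g_{w_1}=1$ and $g_{w_2}=g-1$, obtained by collapsing all of $\Lambda$ except $v_0$ into the single vertex $w_2$ (so $w_1$ corresponds to $v_0$). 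As explained in the subsection "Clutching along trees," $\kappa_\Lambda$ factors through $\kappa_{\Lambda''}$, so $\Delta_\Lambda[\bar\calh_g]\subseteq\Delta_{\Lambda''}[\bar\calh_g]=\Delta_1[\bar\calh_g]$; hence $\bar S$ meets $\Delta_1[\bar\calh_g]^f$, and by Lemma \ref{Lintersecttheory}(a) it contains a full component of $\Delta_1[\bar\calh_g]^f$. This already gives the first assertion.

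For parts (a) and (b) I would invoke Theorem \ref{thdegentree}(b), which gives freedom to prescribe the $p$-ranks of the elliptic components of $\Lambda$: for any $\st{f_v}\in\calf(\Lambda,f)$, $\bar S$ contains an irreducible component of $\kappa_\Lambda(\cross_{v\in\Lambda}\til\calh_{g_v}^{f_v})$. For part (a), since $f\le g-1$ there is at least one vertex with $f_v$ allowed to be $0$; choose $\st{f_v}$ so that the leaf $v_0$ has $f_{v_0}=0$. Then, applying the factorization of $\kappa_\Lambda$ through $\kappa_{\Lambda''}$ and identifying $\til\calh_1^{f_{v_0}}=\til\calh_1^0$ in the $w_1$-slot, the image lies in $\kappa_{\Lambda''}(\til\calh_1^0\cross \til\calh_{g-1}^{f})$, and the clutched-off subtree (the rest of $\Lambda$ with the remaining $f_v$'s, summing to $f$) is a component of $\Delta_{\Lambda'}[\bar\calh_{g-1}]^f\subseteq\bar\calh_{g-1}^f$; so $\bar S$ contains a component of $\kappa_{1,g-1}(\til\calh_1^0\cross\til\calh_{g-1}^f)$ after applying Lemma \ref{Lintersecttheory}(a) once more. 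Part (b) is symmetric: since $f\ge 1$ we may instead arrange $f_{v_0}=1$, putting the image inside $\kappa_{1,g-1}(\til\calh_1^1\cross\til\calh_{g-1}^{f-1})$.

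The one genuine subtlety — and the step I expect to require the most care — is the bookkeeping at the level of labeled curves, i.e.\ ensuring that collapsing $\Lambda$ to $\Lambda''$ and then to the two-vertex tree is compatible with the $\sym(2g+2)$-labelings implicit in the $\til\kappa$'s, so that the factorization $\kappa_\Lambda = \kappa_{\Lambda''}\comp(\text{clutch the rest})$ holds on the nose and not merely up to relabeling. This is exactly the kind of issue that Lemma \ref{lemswitchtree} (via Lemma \ref{lemswitchrampoint}) was designed to handle, and I would lean on it to move the prescribed $p$-rank onto the chosen leaf without leaving $\bar S$. Everything else is formal: repeated use of Lemma \ref{Lintersecttheory}(a) to upgrade "intersects" to "contains a component," and the dimension count of Lemma \ref{lemdimtree}(b) to know these intersections are as large as the relevant boundary strata.
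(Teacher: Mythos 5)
Your proposal is correct and takes essentially the same approach as the paper's own proof: degenerate to a tree of elliptic curves via Theorem \ref{thdegentree}, peel off a leaf (which is always possible), use Theorem \ref{thdegentree}(b) to impose the desired $p$-rank on that leaf, and upgrade intersection to containment by the dimension count. The paper's proof is merely terser: it cites Theorem \ref{thdegentree}(c)--(b) together with Lemma \ref{lemtouchcontain} in three lines, leaving implicit the refinement $\Lambda\to\Lambda''$ (a two-vertex tree) and the resulting inclusion $\Delta_\Lambda\subseteq\Delta_{\Lambda''}=\Delta_1$ that you spell out; your appeal to Lemma \ref{Lintersecttheory}(a) rather than Lemma \ref{lemtouchcontain} is immaterial, since for a two-vertex tree they say the same thing. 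The labeling subtlety you flag is real but already absorbed into the statement of Theorem \ref{thdegentree}(b), which you invoke, so no further appeal to Lemma \ref{lemswitchtree} is needed at this stage.
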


\begin{proof}
By Theorem \ref{thdegentree}(c), $\bar S$ contains the moduli point of a tree of
elliptic curves, of which $f$ are ordinary and $g-f$ are supersingular.
Every tree has a leaf; by Theorem \ref{thdegentree}(b), that leaf
can be chosen to be ordinary or supersingular if the obvious necessary
constraint is satisfied.  The result follows by Lemma \ref{lemtouchcontain}.
\end{proof}

The $\ell$-adic and $p$-adic monodromy proofs in Section \ref{Smono} rely on degeneration
to $\Delta_{1,1}$.  One can label the four possibilities for
$(f_1,f_2,f_3)$ such that $f_1 +f_2+f_3=f$ and $0 \le f_1,f_3 \le 1$
as follows:
(A) $(1,f-2,1)$;
(B) $(0,f-1,1)$; 
(B') $(1,f-1,0)$; and
(C) $(0,f,0)$.

\begin{corollary} 
\label{cornewdegen11}
Suppose $g \geq 3$ and $0 \leq f \leq g$.  Let $S$ be an irreducible component of $\calh_g^f$.  
\begin{alphabetize}
\item Then $\bar S$ intersects $\Delta_{1,1}[\bar \calh_g]^f$.
\item There is an irreducible component $\til S$ of $\bar S \cross
  \til \calh_g$, and a choice of $(f_1,f_2,f_3)$ from cases (A)-(C); 
and there are irreducible components $S_1$ of $\til \calh_{1}^{f_1}$
and $S_2$ of $\til\calh_{g-2}^{f_2}$ 
and $S_3$ of $\til \calh_{1}^{f_3}$;
and there are irreducible components $S_R$ of $\til \calh_{g-1}^{f_2+f_3}$ and $S_L$ of 
$\til \calh_{g-1}^{f_1+f_2}$;
such that the restriction of the clutching maps of \eqref{diagclutchbox} yields a commutative diagram
\begin{equation}
\label{diagclutch}
\xymatrix{
S_1 \cross S_2 \cross S_3 \ar[r]  \ar[d] \ar[dr]^{\til \kappa_{1,g-2,1}}& 
 S_1 \cross S_R \ar[d]\\
 S_L \cross S_3 \ar[r] & \til S \cap \Delta_{1,1}[\til \calh_g].
}
\end{equation}
\item Furthermore, case (A) occurs as long as $f \geq 2$, cases (B) and (B') occur as long as $1 \leq f \leq g-1$, 
and case (C) occurs as long as $f \leq g-2$. 
\end{alphabetize}
\end{corollary}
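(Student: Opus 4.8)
\textbf{Proof proposal for Corollary \ref{cornewdegen11}.}

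The plan is to deduce everything from Theorem \ref{thdegentree} by extracting, from a tree of elliptic curves in $\bar S$, a sub-configuration of the shape described by the diagram \eqref{diagclutchbox}. First I would apply Theorem \ref{thdegentree}(c) to obtain a clutching tree $\Lambda$ of elliptic curves with $g(\Lambda)=g$ such that $\bar S$ contains a component of $\Delta_\Lambda[\bar\calh_g]^f$. Since $g\ge 3$, the tree $\Lambda$ has at least three vertices, so it has two distinct leaves $v_1,v_3$; the tree $\Lambda'$ on three vertices, with $g_{v_1}=g_{v_3}=1$, $g_{v_2}=g-2$, and $v_2$ adjacent to both $v_1$ and $v_3$, is refined by $\Lambda$ (collapse everything except the two chosen leaves). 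Hence $\Delta_\Lambda\subset\Delta_{\Lambda'}=\Delta_{1,1}[\bar\calh_g]$, so $\bar S$ intersects $\Delta_{1,1}[\bar\calh_g]$; combining with Lemma \ref{lemtouchcontain} applied to $\Lambda'$ gives that $\bar S$ actually contains a component of $\Delta_{1,1}[\bar\calh_g]^f$. This proves part (a), and part (c) follows at the same time: by Theorem \ref{thdegentree}(b) applied to $\Lambda'$, any $\st{f_v}\in\calf(\Lambda',f)$ is realized, and the admissible triples $(f_{v_1},f_{v_2},f_{v_3})$ with $0\le f_{v_1},f_{v_3}\le 1$ and $f_{v_1}+f_{v_2}+f_{v_3}=f$ are exactly cases (A)--(C), subject to the stated numerical constraints $f\ge 2$ for (A), $1\le f\le g-1$ for (B)/(B'), $f\le g-2$ for (C) (these just say the middle index $f_{v_2}$ lies in $[0,g-2]$).

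For part (b), I would unwind the definitions of the three clutching maps assembled in \eqref{diagclutchbox}. Fix a triple $(f_1,f_2,f_3)$ from cases (A)--(C) that is realized by $\bar S$. By Theorem \ref{thdegentree}(b) (or Lemma \ref{lemtouchcontain}) applied to $\Lambda'$, there are irreducible components $S_1\subset\til\calh_1^{f_1}$, $S_2\subset\til\calh_{g-2}^{f_2}$, $S_3\subset\til\calh_1^{f_3}$ such that $\bar S\cross\til\calh_g$ contains an irreducible component $\til S$ of the closure of $\kappa_{\Lambda'}(S_1\cross S_2\cross S_3)$ lying in $\Delta_{1,1}[\til\calh_g]^f$. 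The clutching morphism $\kappa_{\Lambda'}$ factors in two ways through the two intermediate edges of $\Lambda'$: clutching $v_1$ to $v_2$ first produces a genus $g-1$ curve and clutching $v_2$ to $v_3$ first produces a different genus $g-1$ curve, which is precisely the content of diagram \eqref{diagclutchbox}. Thus $\til\kappa_{1,g-2}(S_1\cross S_2)$ lies in an irreducible component $S_L\subset\til\calh_{g-1}^{f_1+f_2}$ and $\til\kappa_{g-2,1}(S_2\cross S_3)$ lies in an irreducible component $S_R\subset\til\calh_{g-1}^{f_2+f_3}$ (the $p$-ranks are computed by \eqref{eqblrprank}). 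Restricting \eqref{diagclutchbox} to these components gives the commutative square \eqref{diagclutch}, with lower-right corner $\til S\cap\Delta_{1,1}[\til\calh_g]$ by construction.

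The only genuinely delicate point is bookkeeping with the labeled moduli spaces $\til\calh_g$: the clutching maps $\til\kappa$ require choices of sections along which to glue and a relabeling of the smooth ramification locus, and one must check that the chosen components $S_1,S_2,S_3,S_L,S_R$ can be taken compatibly so that the square literally commutes rather than only commuting up to the $\sym$-action on labels. This is handled exactly as in the "clutching along trees" discussion of Section \ref{Sclutch}: one works with the morphisms $\gamma\comp\til\kappa\comp(\gamma_1\cross\gamma_2\cross\gamma_3)$ for suitable permutations, so that the gluing sections and the induced labelings on the two intermediate genus $g-1$ curves agree with those coming from $\kappa_{\Lambda'}$. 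Once the labelings are pinned down this way, commutativity of \eqref{diagclutch} is immediate from commutativity of \eqref{diagclutchbox}, and the fact that $\bar S\cross\til\calh_g$ contains $\til S$ follows since $\varpi_g$ is finite and the unlabeled configuration lies in $\bar S$ by Theorem \ref{thdegentree}. I expect this label-tracking to be the main obstacle; everything else is a direct application of the tree machinery already established.
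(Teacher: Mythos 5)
Your proposal follows the same strategy as the paper: invoke Theorem \ref{thdegentree} to land in a tree of elliptic curves, pick two leaves, coalesce the rest to the three-vertex tree $\Lambda'$ with $g_{v_2}=g-2$, and use the refinement relation $\Delta_\Lambda\subset\Delta_{\Lambda'}$ together with Lemma \ref{lemtouchcontain} to get containment in $\Delta_{1,1}[\bar\calh_g]^f$. Your unwinding of diagram \eqref{diagclutchbox} for part (b) and your remarks about the labeling bookkeeping are also consistent with the paper's (terser) treatment.

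There is one imprecision worth flagging. You justify the realizability of all triples $(f_1,f_2,f_3)$ in $\calf(\Lambda',f)$ by ``Theorem \ref{thdegentree}(b) applied to $\Lambda'$,'' but Theorem \ref{thdegentree}(b) (and the underlying Lemma \ref{lemswitchtree}) is stated only for clutching trees of \emph{elliptic} curves, and $\Lambda'$ has $g_{v_2}=g-2>1$ when $g>3$, so it is not such a tree. The fix is exactly what the paper does: apply \ref{thdegentree}(b) to the original tree $\Lambda$ of elliptic curves, choosing $f_{v_1},f_{v_3}$ on the two selected leaves to be the desired $f_1,f_3$ and distributing $f_2$ among the remaining $g-2$ elliptic vertices (possible precisely because $0\le f_2\le g-2$, which is the content of the numerical constraints in part (c)); only \emph{then} coalesce to $\Lambda'$. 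With that reordering, your argument matches the paper's.
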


\begin{proof}
By Theorem \ref{thdegentree}(a), there is a clutching tree of elliptic
curves $\Lambda$ such that $\bar S$ contains a component of
$\Delta_\Lambda[\bar \calh_g]^f$.  Let $v_1$ and $v_3$ be two leaves of $\Lambda$; using
Theorem \ref{thdegentree}(b), one can assume that $f_{v_i} = f_i$ where $(f_1,f_2,f_3)$ is chosen as in part (c).  
Let $\Lambda'$ be the tree obtained by coalescing all vertices of
$\Lambda$ except for $v_1$ and $v_3$.  Let $v_2$ denote this new vertex
and let $f_{v_2} = f_2$.  Since $\Lambda$ refines $\Lambda'$, 
then $\bar S$ intersects $\Delta_\Lambda[\bar \calh_g]^f$ which completes part (a).
Moreover, there is an irreducible component $\til S$ of $\bar S \cross_{\bar \calh_g} \til \calh_g$ such that
$\til S$ intersects $\Delta_{1,1}[\til \calh_g]^f$.
Part (b) follows from the definition of $\Delta_{1,1}[\til \calh_g]^f$ and Lemma \ref{lemtouchcontain}.
\end{proof}

\begin{corollary}
\label{cordegenprank}
Let $S$ be an irreducible component of $\calh_g^f$.  For each $0 \le
f' < f$, there exists an irreducible component $T$ of $\calh_g^{f'}$
such that $\bar S$ contains $T$.
\end{corollary}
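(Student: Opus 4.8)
The plan is to induct downward on $f'$, reducing each step to a drop of exactly one in the $p$-rank. So it suffices to prove the statement when $f' = f-1$; iterating this (applying the result to the newly produced component $T$ of $\calh_g^{f-1}$, and so on) then gives every $f' < f$. For the single-step claim, I would use the degeneration to the boundary stratum $\Xi_0$ supplied by Corollary \ref{cornewdegen1}(b), or more directly the explicit construction inside the proof of Theorem \ref{thdegentree}, Case (i). Concretely, since $f \ge 1$, the curve $\bar S$ contains an irreducible component of $\kappa_{1,g-1}(\til\calh_1^1 \cross \til\calh_{g-1}^{f-1})$ by Corollary \ref{cornewdegen1}(b); but I actually want to move a node onto the \emph{generic} part of a component rather than into $\Delta_1$, so I will instead work with $\kappa_{g-1}$ applied to $\bar\calh_{g-1;1}^{f-1}$.

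The key steps: First, by Corollary \ref{cornewdegen1} (together with Lemma \ref{LhypinterDelta0}(a),(b)), $\bar S$ contains the image under $\kappa_{g-1}$ of some irreducible component $\bar V'$ of $\bar\calh_{g-1;1}^{f-1}$, hence contains $\kappa_{g-1}(\phi_{g-1;1}\inv(\bar V''))$ where $\bar V'' = \phi_{g-1;1}(\bar V')$ is a component of $\bar\calh_{g-1}^{f-1}$, using Lemma \ref{lemlabeled} to see that $\bar V'$ is a full fiber of $\phi_{g-1;1}$ over $\bar V''$. Second, the generic point of $\bar V''$ is the moduli point of a \emph{smooth} hyperelliptic curve $Y_1$ of genus $g-1$ and $p$-rank $f-1$ (by Lemma \ref{lemhyplabeled}); choosing a non-Weierstrass point $P$ on such a $Y_1$ and applying $\kappa_{g-1}$, formula \eqref{eqblrprank0} shows the resulting curve $\calc_{g,s}$ has $p$-rank $(f-1)+1 = f$... which is the wrong direction. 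So instead I take the opposite degeneration: I want a point of $\bar S$ whose associated curve is smooth of $p$-rank $f-1$.

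Let me restructure. The cleaner approach is to invoke Theorem \ref{thdegentree}(b): $\bar S$ contains an irreducible component $\Gamma$ of $\kappa_\Lambda(\cross_v \til\calh_{g_v}^{f_v})$ for \emph{every} $\{f_v\} \in \calf(\Lambda,f)$. Now fix such a $\Lambda$ and pick $\{f_v'\} \in \calf(\Lambda, f-1)$; by Theorem \ref{thdegentree}(b) applied with $f$ replaced by... no, Theorem \ref{thdegentree} is stated for the $p$-rank of $S$ itself. The correct route: $\bar S$ contains a component $\Gamma$ of $\Delta_\Lambda[\bar\calh_g]^f$, and by Lemma \ref{lemdimtree}(b), $\dim\Gamma = g + f - |\Lambda|$. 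Since $\Gamma \subseteq \bar S$ and $\bar S$ is irreducible of dimension $g-1+f$, and since $\bar S$ is \emph{not} contained in $\Delta_\Lambda$ (it meets the open stratum $\calh_g^f$ by Lemma \ref{lemhyplabeled}), the component $\Gamma$ is a proper closed subset, so in particular $\bar S$ contains points outside $\calh_g^f$ of $p$-rank $f$; but I need $p$-rank $f' < f$. Here I use the key fact that $\bar S$ is \emph{complete} of dimension $g-1+f > g-1$, together with \cite[Lemma 2.6]{FVdG:complete} as in Lemma \ref{LhypinterDelta0}(a): actually the cleanest statement is that the $p$-rank can only drop on closed subsets, and $\bar S \cap \bar\calh_g^{\le f'}$ is closed of codimension at most $f - f'$ in $\bar S$ by \cite[Thm.\ 2.3.1]{katzsf} (purity of the $p$-rank stratification), hence nonempty and of dimension $g-1+f' \ge g-1 \ge 1$; since it is closed in the complete stack $\bar S$ it is complete, and since $\calh_g^{f'}$ contains no complete substacks of positive dimension (Lemma \ref{LhypinterDelta0}(c) cites \cite[Cor.\ 1.9]{yamaki04}), the locus $\bar S \cap \calh_g^{f'}$ — which is open and dense in $\bar S \cap \bar\calh_g^{\le f'}$ by the density of $\calh_g^{f'}$ in $\bar\calh_g^{f'}$ (Lemma \ref{lemhyplabeled}(a)) — is nonempty if $f' \ge 1$, but could in principle be empty if $f'=0$. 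For $f' \ge 1$, take $T$ to be an irreducible component of $\bar S \cap \bar\calh_g^{f'}$ that meets $\calh_g^{f'}$, hence a component of $\calh_g^{f'}$ of the right dimension, contained in $\bar S$. For $f' = 0$ I argue by downward induction having already produced a component $T_1$ of $\calh_g^1$ inside $\bar S$; then I apply the case-(i) construction of Theorem \ref{thdegentree}'s proof, or simply apply the same purity/completeness argument to $T_1$ in place of $S$, noting that $\calh_g^0$ does contain points in the closure of $\calh_g^1$ by the supersingular-reduction phenomenon (any nonisotrivial family has supersingular — hence $p$-rank-dropping — fibers, cf. Lemma \ref{lemswitchrampoint}).

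The main obstacle I anticipate is the bookkeeping at $f' = 0$: the locus $\calh_g^0$ is complete-free of positive dimension, so I cannot simply intersect and take dimensions — I must genuinely exhibit a point, which is why I fall back on the degeneration construction from Theorem \ref{thdegentree}(c) (a tree of elliptic curves, $f$ ordinary and $g-f$ supersingular) and then deform: replacing one ordinary elliptic component of that tree by a supersingular one stays inside a component of $\calh_g^{f-1}$'s closure, but to land in the \emph{open} stratum $\calh_g^{f'}$ rather than on the boundary I instead use that $\bar S$, being complete of dimension $\ge 1$ and not contained in $\Delta_0$-type loci in a way that forces positive $p$-rank, must meet the $p$-rank-zero stratum; making this last sentence rigorous is exactly the content of \cite[Lemma 2.6]{FVdG:complete} combined with purity of the stratification, and it is really the only non-formal input. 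For robustness I would present the argument uniformly: fix $0 \le f' < f$; by purity \cite[Thm.\ 2.3.1]{katzsf} the closed substack $\bar S^{\le f'} := \bar S \cap (\cup_{f''\le f'}\bar\calh_g^{f''})$ has every component of dimension $\ge \dim\bar S - (f-f') = g-1+f' \ge g-1 \ge 1$ and is complete; choose a component $Z$; by \cite[Lemma 2.6]{FVdG:complete} (no complete positive-dimensional substack avoids $\Delta_0$ when the would-be $p$-rank forces it) $Z$ meets $\bar\calh_g^{f'}$ exactly, and $Z \cap \calh_g^{f'}$ is dense in $Z$, nonempty, giving a component $T$ of $\calh_g^{f'}$ with $T \subseteq \bar S$; and when $f'=0$ one supplements with the degeneration-to-supersingular-elliptic-tree argument to guarantee $p$-rank exactly $0$ is attained. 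This completes the proof.
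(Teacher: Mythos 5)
Your final plan is in the right ballpark---purity of the $p$-rank stratification plus a dimension count, with an auxiliary degeneration to establish nonemptiness---and this is indeed the skeleton of the paper's argument, which reduces to $f' = f-1$ and iterates. But the execution has three concrete gaps. First, the purity input is \cite[Lemma 1.6]{O:purity}, not \cite[Thm.\ 2.3.1]{katzsf} (the latter gives only the existence of the stratification), and it applies to an abelian scheme. For this reason the paper works with $S^*$, the closure of $S$ in $\bar\calh_g - \Delta_0[\bar\calh_g]$, where the universal Jacobian is an abelian scheme; your $\bar S^{\le f'}$ lives in all of $\bar\calh_g$ and so meets $\Delta_0$, where the Jacobian is only semiabelian and the cited purity result does not directly apply. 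Second, purity yields the codimension bound only \emph{if} the lower-$p$-rank locus is nonempty, and you never actually establish nonemptiness except for a vague gesture at $f' = 0$. This is the genuine content of the corollary, and it is exactly what the paper supplies: by Corollary \ref{cornewdegen1}(b), $S^*$ contains an irreducible component of $\kappa_{1,1}(\til\calh_1^1\cross\til\calh_{g-1}^{f-1})$; since $\til\calh_1^1$ is dense in $\til\calh_1$, its closure contains supersingular points, so $S^*$ in fact contains an irreducible component of $\kappa_{1,1}(\til\calh_1^0\cross\til\calh_{g-1}^{f-1})$, which has $p$-rank $f-1$. That single, explicit step is what makes $Z=(S^*)^{f-1}$ nonempty; once that is in hand, the equality $\dim Z = g-2+f = \dim\bar\calh_g^{f-1}$ forces $Z$ to contain a component of $\bar\calh_g^{f-1}$, and Lemma \ref{lemhyplabeled}(a) passes to the interior.

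Third, your appeal to \cite[Lemma 2.6]{FVdG:complete} is not doing what you want. That lemma bounds the dimension of a complete substack of $\bar\calh_g - \Delta_0[\bar\calh_g]$; it says nothing about a component $Z$ of $\bar S^{\le f'}$ ``meeting $\bar\calh_g^{f'}$ exactly'' or about $Z\cap\calh_g^{f'}$ being dense. In fact your intended conclusion is somewhat at odds with its use: if $f'\ge 1$ then completeness of $Z$ together with that lemma would force $Z\cap\Delta_0 \ne \emptyset$, which is precisely the locus you need to exclude for purity. So the two tools you invoke are pulling against each other rather than cooperating. The fix is to adopt the paper's route exactly: restrict to $S^*$, prove nonemptiness of $(S^*)^{f-1}$ via the degeneration of Corollary \ref{cornewdegen1}(b) followed by specialization of the elliptic factor to the supersingular locus, and only then apply purity and compare dimensions with $\bar\calh_g^{f-1}$. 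For $f' < f-1$, simply iterate the one-step statement, as you propose in your opening paragraph; there is no separate difficulty at $f'=0$ once the one-step result is in hand.
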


\begin{proof}
It suffices to prove the result for $f' =
f-1$.  Let $S^*$ be the closure of $S$ in $\bar\calh_g -
\Delta_0[\bar\calh_g]$.  A purity result \cite[Lemma
1.6]{O:purity} shows that $S^* - (S^*)^f$, if nonempty, is pure of
dimension $\dim S^* - 1$.  In particular, let $Z = (S^*)^{f-1}$; then
$Z$, if nonempty, is pure of dimension $g-2+f$.  

By Corollary
\ref{cornewdegen1}(b), $S^*$ contains an irreducible component of
$\kappa_{1,1}(\til \calh_1^1 \cross \til \calh_{g-1}^{f-1})$.
Since $\til \calh_1^1$ is dense in $\til \calh_1$, its closure
contains the moduli points of supersingular elliptic curves (with
labeled smooth ramification locus).  Therefore, $S^*$ contains an
irreducible component of $\kappa_{1,1}(\til \calh_1^0 \cross \til
\calh_{g-1}^{f-1})$, and $Z$ is nonempty.  Then $\dim Z = g-2+f = \dim
\bar \calh_g^{f-1}$, and so $Z$ contains a component $\bar T$ of
$\bar\calh_g^{f-1}$.  By Lemma \ref{lemhyplabeled}(a) $\bar
S$ contains a component $T$ of $\calh_g^{f-1}$.
\end{proof}

\subsection{Application to Newton polygons}

Recall that a stable curve $C$ of compact type is supersingular if
all the slopes of the Newton polygon of its Jacobian equal $1/2$.
This is equivalent to the condition that the Jacobian of $C$ is isogenous to 
a product of supersingular elliptic curves.
Note that a supersingular curve necessarily has $p$-rank zero.  
An abelian variety of $p$-rank zero is necessarily supersingular only when the dimension satisfies $g \leq 2$.

In this section, we prove that the Newton polygon of a generic hyperelliptic curve of $p$-rank $0$ 
is not supersingular when $g \geq 3$.
The result generalizes \cite[Thm.\ 1.12]{oorthess} which is the case $g=3$.

Newton polygons have the following semicontinuity property: let $S =
\spec(R)$ be the spectrum of a local ring, 
with generic point $\eta$ and geometric closed point $s$;
if $G$ is a $p$-divisible group over $S$, then $\nu(G_{\eta})$ either
equals or lies below $\nu(G_{s})$.  
(The latter condition means that 
$\nu(G_{\eta})$ and $\nu(G_{s})$ have the same endpoints and
all points of $\nu(G_{\eta})$ lie below $\nu(G_{s})$.) 

\begin{corollary} \label{Cnotsupersing}
Suppose $p$ is an odd prime and $g \ge 3$. 
Let $\eta$ be a generic point of $\calh_g^0$.  Then $\calc_{g,\eta}$ is not supersingular.
In particular, there exists a smooth hyperelliptic curve of genus $g$ and $p$-rank $0$ which is not supersingular.
\end{corollary}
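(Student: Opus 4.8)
The plan is to leverage Theorem \ref{thdegentree} together with the semicontinuity of Newton polygons and the existing genus-3 result \cite[Thm.\ 1.12]{oorthess}. First I would let $S = \bar S$ be the closure in $\bar\calh_g$ of the irreducible component of $\calh_g^0$ containing $\eta$, and apply Theorem \ref{thdegentree}(a): there is a clutching tree of elliptic curves $\Lambda$ with $g(\Lambda)=g$ such that $\bar S$ contains an irreducible component of $\Delta_\Lambda[\bar\calh_g]^0$. Since $f=0$, the only element of $\calf(\Lambda,0)$ assigns $f_v=0$ to every vertex, so by Theorem \ref{thdegentree}(b) (or directly from (c)) $\bar S$ contains the moduli point of a tree $C_0$ of $g$ supersingular elliptic curves. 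This curve $C_0$ is of compact type, and by \eqref{eqblr} its Jacobian is isogenous to a product of supersingular elliptic curves, hence $C_0$ is itself supersingular; so degeneration alone cannot distinguish $\bar S$ from the supersingular locus, and I need a different input to control the generic point.

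The idea to separate the cases is to degenerate only partway, keeping one non-elliptic component of genus $3$ on which I can invoke Oort's theorem. Concretely, I would argue that $\bar S$ intersects $\Delta_{g-3}[\bar\calh_g]^0$: repeatedly coalescing vertices of $\Lambda$ produces a tree $\Lambda'$ refined by $\Lambda$ with two vertices, one of genus $3$ and one of genus $g-3$, so $\Delta_\Lambda \subset \Delta_{\Lambda'} \subset \Delta_{g-3}[\bar\calh_g]$, and hence $\bar S$ meets (and by Lemma \ref{Lintersecttheory}(a) contains) a component of $\Delta_{g-3}[\bar\calh_g]^0$. By \eqref{f1f2conditions} with $f_1=f_2=0$ and the definition of $\kappa_{3,g-3}$, there are components $\til V_1 \subset \til\calh_3^0$ and $\til V_2 \subset \til\calh_{g-3}^0$ with $\bar S \supseteq \kappa_{3,g-3}(\til V_1\cross\til V_2)$. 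Now I choose the moduli points inside these components cleverly: by \cite[Thm.\ 1.12]{oorthess}, a generic hyperelliptic curve of genus $3$ and $p$-rank $0$ is not supersingular, so the generic point $\eta_1$ of (the image in $\bar\calh_3^0$ of) $\til V_1$ has non-supersingular Jacobian; and for $\til V_2$, by the inductive structure we may take the moduli point $s_2$ of a tree of $g-3$ supersingular elliptic curves. Clutching along a ramification point of each — so that the glued curve is of compact type with one node — and using \eqref{eqblr}, the resulting genus-$g$ stable curve $C_1 \in \bar S$ of compact type has Jacobian isogenous to $\jac(\calc_{3,\eta_1}) \cross \prod (\text{supersingular elliptic curves})$, which is not supersingular because its first factor has a slope $\neq 1/2$.

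Finally, I would pass from the non-supersingular stable curve $C_1$ in the boundary to the generic point of $S$ using semicontinuity of Newton polygons \emph{together with} the fact that the $p$-rank is constant ($=0$) on $S$. Since $C_1$ is of compact type with $p$-rank $0$ and not supersingular, its Newton polygon lies strictly below the supersingular polygon at some point; choosing a curve $T \to \spec(R)$ through $C_1$ with generic point mapping to the generic point $\eta$ of $S$, semicontinuity forces $\nu(\jac(\calc_{g,\eta}))$ to lie on or below $\nu(\jac(C_1))$, hence to be non-supersingular. Therefore $\calc_{g,\eta}$ is not supersingular; since by Lemma \ref{lemhyplabeled}(a) $\calh_g^0$ is dense in $\bar\calh_g^0$ and $\calc_{g,\eta}$ is smooth (Lemma \ref{lemhyplabeled} shows generic points of $\bar\calh_g^f$ carry smooth curves), there is a smooth hyperelliptic curve of genus $g$ and $p$-rank $0$ which is not supersingular. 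The main obstacle is the second step: arranging the degeneration so that exactly one genus-$3$ hyperelliptic factor survives, so that Oort's genus-$3$ result can be applied to that factor while the remaining factors are harmlessly supersingular; this requires care that the clutching keeps the curve of compact type (clutch along ramification points) and that the genus-$3$ component's moduli point is chosen generic enough in $\til V_1$ to inherit the non-supersingular Newton polygon.
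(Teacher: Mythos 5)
Your proposal correctly identifies the key ingredients — degeneration via Theorem \ref{thdegentree}, the genus-3 input from \cite[Thm.\ 1.12]{oorthess}, and semicontinuity of Newton polygons — but there is a genuine gap in your second step, and it is precisely the obstacle the paper's proof is designed to avoid.

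You assert that ``repeatedly coalescing vertices of $\Lambda$ produces a tree $\Lambda'$ refined by $\Lambda$ with two vertices, one of genus $3$ and one of genus $g-3$,'' and hence that $\bar S$ intersects $\Delta_{g-3}[\bar\calh_g]^0$. This is not true for an arbitrary clutching tree $\Lambda$: coalescing to a two-vertex tree with genera $(3,g-3)$ requires an edge of $\Lambda$ whose removal disconnects it into components of sizes $3$ and $g-3$, and Theorem \ref{thdegentree} gives no control over the shape of $\Lambda$. For example if $\Lambda$ were a star (one central vertex with $g-1$ leaves) and $g\ge 5$, every edge removal gives pieces of sizes $1$ and $g-1$, so no such coalescing exists. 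The paper points this out explicitly in the discussion just before Corollary \ref{cornewdegen1} (``This is only guaranteed when $i=1$'') and records the general version of your claim as an open problem (Question \ref{Q3}). In other words, you have implicitly assumed a positive answer to an open question.

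The paper's actual proof sidesteps this by peeling off only one elliptic leaf, which is always possible: Corollary \ref{cornewdegen1} guarantees $\bar S$ contains a component of $\kappa_{1,g-1}(\til\calh_1^0\cross\til\calh_{g-1}^0)$, and then one argues by induction on $g$, with $g=3$ (Oort's theorem) as the base case. The Newton polygon of the genus-$(g-1)$ factor at its generic point has a slope in $(0,1/2)$ by induction, and semicontinuity transfers this to the generic point of $S$. Your final two steps (semicontinuity and density of $\calh_g^0$ in $\bar\calh_g^0$) are fine; the fix is to replace your partway-degeneration to $\Delta_{g-3}$ by the one-leaf-at-a-time induction.

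Two minor remarks. First, your opening observation that degenerating all the way to a tree of $g$ supersingular elliptic curves yields a supersingular Jacobian and hence ``degeneration alone cannot distinguish $\bar S$ from the supersingular locus'' is a useful sanity check, and it correctly motivates the need for semicontinuity plus a non-supersingular boundary point. Second, you do not actually need to specialize $\til V_2$ to a tree of supersingular elliptic curves; comparing against the generic point of $\kappa(\til V_1\cross\til V_2)$ suffices, and is slightly cleaner, since the $g-3$ (or in the corrected argument, $g-1$) factor already has slopes $\le 1/2$ by $p$-rank $0$.
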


\begin{proof}
When $g =3$, this follows from \cite[Thm.\ 1.12]{oorthess}.  
For $g \geq 4$, the proof proceeds by induction.
Let $S$ be the closure of $\eta$ in $\calh_g^0$.  
By Corollary \ref{cornewdegen1}, $\bar S$ contains a component of $\Delta_1[\bar \calh_g]^0$.
Thus there are components $\til V_1$ of $\til \calh_1^0$ and $\til V_2$ of $\til \calh_{g-1}^0$
such that $\bar S$ contains $\kappa_{1,g-1}(\til V_1 \cross \til V_2)$.
By the inductive hypothesis, the Newton polygon of the generic point of $\til V_2$ is not supersingular;
in particular, it has a slope $\lambda$ such that $0 < \lambda < 1/2$.  
The same is then true of the generic point of $\kappa_{1,g-1}(\til V_1 \cross \til V_2)$.
By semicontinuity \cite[Thm.\ 2.3.1]{katzsf}, the generic Newton polygon of $\bar S$ (and thus of $S$)
either equals or lies below 
that of $\kappa_{1,g-1}(\til V_1 \cross \til V_2)$.  In particular, it has a slope $\lambda' <1/2$. 
Thus $\calc_{g, \eta}$ is not supersingular.
\end{proof}

\begin{remark}
When $p=2$ (a case not considered in this paper), there are some
results about the slopes of Newton polygons of hyperelliptic curves of
$p$-rank $0$,  
see e.g.\ \cite{Zhu:nohypss}.
\end{remark}

\subsection{Open questions about the geometry of the hyperelliptic $p$-rank strata} \label{Squestions}

\begin{question} \label{Q0} 
Does the closure of each component of $\calh_g^f$ contain the moduli point of a chain of elliptic curves with $p$-rank $f$?  
\end{question}

If the answer to Question \ref{Q0} is affirmative then Lemma \ref{lemswitchtree} 
implies that every ordering of $f$ ordinary and $g-f$ supersingular elliptic curves occurs for such a chain. 
In \cite[Cor.\ 3.6]{AP:monoprank}, the authors show the analogous question has a positive answer 
for every component of $\calm_g^f$.  The difference for $\calm_g^f$ is that the clutching morphism identifies two curves
at an arbitrary point of each, rather than a ramification point of
each.  The location of these points can 
then be changed using an analogue of Lemma \ref{lemlabeled}.  

\begin{question}\label{Q3}
For $2 \leq i \leq g-2$, does the closure of each component of $\calh_g^f$ intersect $\Delta_i[\calh_g]^f$?
\end{question}

In \cite[Prop.\ 3.4]{AP:monoprank}, the authors show that the analogous question has a positive answer 
for every component of $\calm_g^f$, also with control over the arrangement of $p$-ranks.  
An affirmative answer to Question \ref{Q0} would imply an affirmative answer to Question \ref{Q3}.

\begin{question} \label{Q1}
How many irreducible components does $\calh^f_g$ have?
\end{question}

One knows that $\calh^f_g$ is irreducible for all $p$ when $f=g$ or when $g=2$ and $f=1$.
If $g \geq 3$, then $\cala_g^f$ is irreducible by \cite[Remark 4.7]{chailadic}.
If $\calh^f_g$ is irreducible, then there is a very short proof that Questions \ref{Q0} 
and \ref{Q3} have affirmative answers.



\section{Endomorphism rings} \label{Sendresults}

In this section, we use degeneration results from Section \ref{Sboundresults} to constrain the
endomorphism ring of a generic curve of given genus and $p$-rank.

Let $\calx_g = \pic^0_{\calc_g/\bar\calh_g}$ be the neutral component
of the relative Picard functor of $\calc_g$ over $\bar\calh_g$; then
$\calx_g \ra \bar\calh_g$ is a semiabelian scheme.  To ease notation,
if $X$ is an abelian variety, let $\e(X) = \End(X)\tensor\rat$, and
let $\e_\ell(X) = \e(X)\tensor_\rat \rat_\ell$; then $\e_\ell(X)$ acts
the rational Tate module $V_\ell(X) :=
T_\ell(X)\tensor_{\integ_\ell}\rat_\ell$.  If $X$ is simple, then the
center of $\e(X)$ is either a totally real or totally imaginary number
field.

\begin{lemma}
\label{lemqorcubic}
Let $\xi$ be a geometric generic point of $\calh_3^0$.  
Then $\calx_{3,\xi}$ is simple and either $\e(\calx_{3,\xi})\iso \rat$
or $\e(\calx_{3,\xi})$ is isomorphic to a totally real cubic field $L$
such that $L\tensor\rat_p$ is a field.
\end{lemma}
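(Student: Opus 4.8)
The plan is to combine the degeneration statement of Theorem~\ref{thdegentree} with the classification of endomorphism algebras of abelian threefolds of $p$-rank zero, using semicontinuity of the Newton polygon (via Corollary~\ref{Cnotsupersing}) to rule out the supersingular case. First I would let $X = \calx_{3,\xi}$ and choose the component $S = \bar S$ of $\calh_3^0$ on which $\xi$ is generic; by Theorem~\ref{thdegentree}(c) the closure $\bar S$ contains the moduli point of a tree of three elliptic curves, all supersingular. Taking the spectrum of a suitable local ring of $S$ at this boundary point and using semicontinuity of Newton polygons \cite[Thm.\ 2.3.1]{katzsf} together with Corollary~\ref{Cnotsupersing}, the generic Newton polygon of $X$ is not supersingular but has $p$-rank $0$; since $g = 3$, the only non-supersingular $p$-rank $0$ Newton polygon in dimension $3$ has slopes $\{1/3, 1/3, 1/3, 2/3, 2/3, 2/3\}$. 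In particular $X[p^\infty]$ is isoclinic of slope $1/3$.

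Next I would extract consequences for $\e(X) = \End(X) \tensor \rat$ from this Newton polygon. Since $X[p^\infty]$ is isoclinic, $X$ is isogenous to a power $Y^m$ of a simple abelian variety $Y$ whose $p$-divisible group is isoclinic of slope $1/3$; the dimension constraint and the fact that slope $1/3$ has denominator $3$ force $\dim Y = 3$ and $m = 1$, so $X$ is simple. Then $\e(X)$ is a division algebra over $\rat$ with center a field $F$ which is either totally real or a CM field, of dimension $[\e(X):F] = d^2$ and with $d^2 \cdot [F:\rat] \mid 2g = 6$ (Albert's classification / the fact that $\e(X)$ embeds in $\End(H^1_{\mathrm{dR}})$, etc.). The $p$-adic behavior is the key additional constraint: because the Newton polygon slopes are $1/3$ and $2/3$ with multiplicity $3$ each, the Dieudonné module computation (or Tate's theorem on endomorphisms of $p$-divisible groups over finite fields combined with a specialization argument) shows that $F \tensor \rat_p$ cannot split off too many factors — concretely, the local invariants of $\e(X)$ at primes above $p$ are constrained so that $[F:\rat] \in \{1,3\}$ and $\e(X)$ is commutative. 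Ruling out the CM (degree $2$ or degree $6$) cases and the quaternion cases is where I would spend the most care: a quaternion algebra over $\rat$ or over a real quadratic field, or a CM field of degree $2$ or $6$, would each either be incompatible with the requirement that the reduction have a slope-$1/3$ isoclinic $p$-divisible group, or would force extra structure (e.g., the $p$-divisible group would have to have slopes compatible with the splitting of $p$ in $F$). This leaves exactly $\e(X) \iso \rat$ or $\e(X) \iso L$ a totally real cubic field, and in the latter case the slope structure forces $p$ to be inert (or more precisely $L \tensor \rat_p$ to be a field), since otherwise the Dieudonné module would decompose and produce a slope other than $1/3$.

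The main obstacle, as indicated above, is the case analysis eliminating the quaternion and CM possibilities: this requires a clean statement relating the local invariants of $\e(X)$ at $p$ to the Newton polygon, which in the mixed-characteristic setting is most cleanly obtained by specializing $X$ to an abelian variety over a finite field with the same Newton polygon, applying the Honda--Tate description of $\e$ of a simple abelian variety over $\ff_q$ (where the slopes of the Newton polygon are exactly the valuations of the Frobenius eigenvalue, normalized), and then using that $\e(X) \subseteq \e(X_{\mathrm{specialized}})$. I would cite \cite[Thm.\ 1.12]{oorthess} for the $g=3$ Newton polygon input and the Honda--Tate theory for the endomorphism-algebra bookkeeping, being careful that the specialization does not \emph{enlarge} the relevant constraints in the wrong direction. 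With that in hand, the statement follows: $\calx_{3,\xi}$ is simple, and $\e(\calx_{3,\xi})$ is either $\rat$ or a totally real cubic field $L$ with $L \tensor \rat_p$ a field.
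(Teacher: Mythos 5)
Your overall strategy---deduce the Newton polygon of $\calx_{3,\xi}$ from \cite[Thm.\ 1.12]{oorthess} and then constrain $\e(\calx_{3,\xi})$---is the same as the paper's, but there are two substantive problems in the execution.

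First, your simplicity argument rests on the assertion that $\calx_{3,\xi}[p^\infty]$ ``is isoclinic of slope $1/3$,'' which is false: the Newton polygon has slopes $1/3$ and $2/3$, each with multiplicity $3$, so the $p$-divisible group has two isoclinic summands, not one. The subsequent deduction that $X \sim Y^m$ with $Y$ isoclinic of slope $1/3$ does not make sense. Simplicity does follow from the Newton polygon, but by a different route (the one the paper takes): if $\calx_{3,\xi} \sim A_1 \oplus A_2$ with $\dim A_1 = 1$ and $\dim A_2 = 2$, both factors have $p$-rank $0$, and a $p$-rank $0$ abelian variety of dimension $\le 2$ is supersingular; this would make $\calx_{3,\xi}$ supersingular, contradicting the slope-$\{1/3,2/3\}$ polygon.

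Second, and more seriously, the step where you claim the ``local invariants of $\e(X)$ at primes above $p$'' force $[F:\rat]\in\{1,3\}$ and $\e(X)$ commutative is doing a lot of unsupported work, and the Honda--Tate-via-specialization plan you sketch does not close the gap. Specialization only gives $\e(\calx_{3,\xi}) \inject \e(\calx_{\text{special}})$, i.e.\ an upper bound for $\e(\calx_{3,\xi})$ in terms of a possibly enormous special-fiber algebra; that gives you no way to exclude, say, a degree-$6$ CM field acting on $\calx_{3,\xi}$. The slope argument (that a field $K$ with $K\tensor\rat_p$ a quadratic field cannot embed in $D_{1/3}\oplus D_{2/3}$, since the latter's maximal subfields are cubic over $\rat_p$) handles only the quadratic imaginary case where $p$ is \emph{inert or ramified} in $K$. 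The paper needs two more arguments that your sketch omits entirely: (i) a CM field of degree $6$ acting would make $\calx_{3,\xi}$ rigid, contradicting $\dim S = 2$; and (ii) a quadratic imaginary $K$ \emph{split} at $p$ (so $K\tensor\rat_p \iso \rat_p\times\rat_p$, and the $D_{1/3}\oplus D_{2/3}$ obstruction vanishes) is ruled out by a signature/Shimura-variety argument: the Torelli image of $S$ would be dense in $\shim_{\calo_K;(r,s)}$ with $(r,s)\in\{(1,2),(2,1)\}$, but the generic member there is ordinary \cite[Thm.\ 1.6.2]{wedhorn}, contradicting $p$-rank zero. Without (i) and (ii), the ``only $\rat$ or a totally real cubic field'' conclusion is not established, and the Honda--Tate bookkeeping you propose points in the wrong direction to supply them.

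The final step---that if $\e(\calx_{3,\xi})\iso L$ then $L\tensor\rat_p$ is a field---is correctly identified as coming from the structure of the $p$-rank zero locus in a Hilbert modular threefold, and this matches the paper.
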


\begin{proof}
Suppose there is an isogeny $\calx_{3,\xi} \sim A_1 \oplus A_2$ for abelian varieties $A_1$ and $A_2$ of dimensions 
$1$ and $2$.  Then $A_1$ and $A_2$ each have $p$-rank $0$ and are thus supersingular.
Then $\calx_{3,\xi}$ is supersingular, which contradicts the fact that 
the Newton polygon of $\calx_{3,\xi}$ has slopes $1/3$ and $2/3$ \cite[Thm.\ 1.12]{oorthess}.
Thus $\calx_{3,\xi}$ is simple.

By the classification of 
endomorphism algebras of simple abelian varieties of prime dimension (e.g.,
\cite[7.2]{oortendabvar}), to complete the proof it suffices to show that neither a complex
multiplication field of degree six nor a quadratic imaginary field
acts on $\calx_{3,\xi}$.
Let $S$ be the closure of $\xi$ in $\calh_3^0$.  Since $\dim S = 2 >
0$ but abelian varieties with complex multiplication are rigid, $\e(\calx_{3,\xi})$ is not a
complex multiplication field of degree $6$. 

To address the possibility of an action by a quadratic imaginary
field $K$, suppose to the contrary that there is a subring of $\End(\calx_{3,\xi})$
isomorphic to an order $\calo_K$ in $K$.  Then $\calo_K\tensor\integ_p$ acts on the $p$-divisible group
$\calx_{3,\xi}[p^\infty]$.  There is an inclusion $K\tensor\rat_p
\inject \End(\calx_{3,\xi}[p^\infty])\tensor \rat_p \iso D_{1/3} \oplus D_{2/3}$.
(Here, $D_\lambda$ denotes the central simple $\rat_p$-algebra with
Brauer invariant $\lambda$.)  Every maximal subfield of $D_{1/3}$
or $D_{2/3}$ is a cubic extension of $\rat_p$, but $K\tensor\rat_p$ is
a $\rat_p$-algebra of degree two, so $K \tensor \rat_p$ cannot be a field.  In particular, $\calx_{3,\xi}$ does not admit an action by a
quadratic imaginary field inert or ramified at $p$.

Finally, suppose $\calx_{3,\xi}$ admits an action by a quadratic imaginary field $K$
which splits at $p$.  Let $(r,s)$ be the signature of the action of
$\calo_K$ on ${\rm Lie}(\calx_{3,\xi})$; the dimensions $r$ and $s$ are nonnegative and $r+s =
3$.  Consider the moduli space $\shim_{\calo_K;(r,s)}$ of
abelian threefolds with an action by $\calo_K$ of signature $(r,s)$.
The Torelli morphism $\tau$ restricts to a finite morphism from
$S$ to a component of $\shim_{\calo_K;(r,s)}$.
Since $\dim S = 2$ and $\dim \shim_{\calo_K;(r,s)} = r \cdot s$,
then $(r,s)$ is either $(1,2)$ or $(2,1)$.
Thus $\tau(S)$ is dense in $\shim_{\calo_K;(r,s)}$.  
This gives a contradiction since $\calx_{3,\xi}$ has $p$-rank zero but the generic
member of $\shim_{\calo_K;(r,s)}$ is ordinary \cite[Thm.\
1.6.2]{wedhorn}.

Therefore, $\e(\calx_{3,\xi})$ is either $\rat$ or a totally real cubic
field, $L$.  In the latter case, the $p$-rank zero locus in a Hilbert
modular threefold attached to $L$ has dimension (at least, and thus
equal to) two, which forces $L\tensor\rat_p$ to be a field.
\end{proof}

\begin{remark}
In the situation of Lemma \ref{lemqorcubic}, it is not known which
outcomes occur.
\end{remark}

\begin{lemma}
\label{lemendnotcubic}
Let $Y$ be a simple abelian variety whose dimension $g$ is relatively prime to $3$.  
If there exists a geometric generic point $\xi_3$ of $\calh_3^0$ for which there is a nontrivial homomorphism 
$\End(Y) \ra \e(\calx_{3,\xi_3})$, then $\End(Y) \iso \integ$.
\end{lemma}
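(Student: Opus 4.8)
\textbf{Proof proposal for Lemma \ref{lemendnotcubic}.}
The plan is to argue by contradiction: if $\End(Y)\not\iso\integ$, then since $Y$ is simple, $\e(Y)=\End(Y)\tensor\rat$ is a division algebra strictly larger than $\rat$, so its center $K$ is a number field of degree $\geq 2$ over $\rat$ (totally real or CM), and $\e(Y)$ has degree $e^2[K:\rat]$ over $\rat$ for some $e\geq 1$, with $e^2[K:\rat]\mid 2g$ by the standard bound on endomorphism algebras of simple abelian varieties. First I would use Lemma \ref{lemqorcubic} to pin down $\e(\calx_{3,\xi_3})$: it is either $\rat$ or a totally real cubic field $L$ with $L\tensor\rat_p$ a field. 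A nontrivial ring homomorphism $\End(Y)\ra\e(\calx_{3,\xi_3})$ cannot exist when the target is $\rat$ unless $\End(Y)$ maps into $\rat$; but a nontrivial homomorphism out of $\End(Y)$ forces its image to contain $1$, and if $\End(Y)\neq\integ$ I must rule out that the image is just $\integ\subset\rat$. Here is the key point: a nontrivial homomorphism $\End(Y)\ra\e(\calx_{3,\xi_3})$ extends $\rat$-linearly to a nonzero, hence injective (as $\e(Y)$ is simple), algebra map $\e(Y)\inject\e(\calx_{3,\xi_3})$. So $\e(Y)$ embeds into either $\rat$ or the cubic field $L$, which is impossible once $\dim_\rat\e(Y)\geq 2$.

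So the only way to have a nontrivial homomorphism with $\End(Y)\neq\integ$ is if $\e(\calx_{3,\xi_3})\iso L$ is the totally real cubic field and $\e(Y)$ embeds into $L$. Then $\e(Y)$ is a subfield of $L$; since $[L:\rat]=3$ is prime, the only subfields are $\rat$ and $L$ itself, so $\e(Y)\iso L$, a totally real cubic field acting on $Y$. This is where the hypothesis $\gcd(g,3)=1$ enters: the degree of the endomorphism field of a simple abelian variety divides $2g$ (and in the totally real case divides $g$, since a totally real field of degree $d$ acting on a $g$-dimensional abelian variety forces $d\mid g$). But $3\nmid g$ by hypothesis, giving the contradiction. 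I would state this divisibility as a consequence of the Albert classification (e.g.\ the reference \cite[7.2]{oortendabvar} already cited in the proof of Lemma \ref{lemqorcubic}, or \cite{oortendabvar} more generally): for a simple abelian variety of dimension $g$ with endomorphism algebra of Albert type I, the totally real center has degree dividing $g$.

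The main obstacle, and the step requiring the most care, is the bookkeeping around the phrase ``nontrivial homomorphism $\End(Y)\ra\e(\calx_{3,\xi_3})$'': one must check that such a homomorphism, being a map of rings sending $1$ to $1$ (or at least to a nonzero idempotent, which in the division algebra $\e(Y)$ must be $1$ after tensoring with $\rat$), really does induce an injective $\rat$-algebra homomorphism $\e(Y)\inject L$ rather than collapsing. Since $\e(Y)$ is a simple $\rat$-algebra (division algebra), any nonzero algebra homomorphism out of it is injective, so this is clean once phrased correctly; the only genuine input beyond Lemma \ref{lemqorcubic} is the Albert-classification divisibility $[\e(Y):\rat]\mid 2g$ together with the refinement that a totally real field acting on a simple $g$-dimensional abelian variety has degree dividing $g$. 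With $3\nmid g$ this forces $\e(Y)=\rat$, i.e.\ $\End(Y)$ is an order in $\rat$; and since $\End(Y)$ is a ring containing $\integ$ that is an order in $\rat$, it equals $\integ$.
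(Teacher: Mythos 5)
Your proof is correct and follows essentially the same route as the paper: both combine Lemma~\ref{lemqorcubic} with the Albert-classification divisibility to force $\e(Y)\iso\rat$ --- the paper states it via $e_0(Y)\mid e_0(\calx_{3,\xi_3})\mid 3$ together with $e_0(Y)\mid g$, while you state it via the injection $\e(Y)\inject L$, which amounts to the same constraint. One minor caveat: your opening remark that a division algebra strictly larger than $\rat$ has center of degree $\geq 2$ over $\rat$ is false (a quaternion algebra over $\rat$ has center $\rat$), but you never actually use that claim --- the work is done by the injectivity of any nonzero map out of the simple algebra $\e(Y)$, and that part of your argument is sound.
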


\begin{proof}
If $Z$ is a simple abelian variety, let $\e_0(Z)$ be the subfield of
$\e(Z)$ fixed by the Rosati involution, and let $e_0(Z) =
[\e_0(Z):\rat]$.  Then $e_0(Z) | \dim(Z)$.

By Lemma \ref{lemqorcubic}, $\e(\calx_{3,\xi_3})$ is a totally real
field of dimension $1$  
or $3$ over $\rat$.  On one hand, the existence of a nontrivial homomorphism
$\End(Y) \ra \e(\calx_{3,\xi_3})$ forces $e_0(Y)$ to divide
$e_0(\calx_{3,\xi_3})$, and thus 
$e_0(Y)|3$.  On the other hand, $e_0(Y)|g$.  Therefore, $e_0(Y) =
1$ and $\e_0(Y) \iso \rat$.  Neither a noncommutative algebra nor a
totally imaginary 
field admits a nontrivial homomorphism to $\e(\calx_{3,\xi_3})$, and thus
$\e(Y) = \e_0(Y)$ and  $\End(Y) \iso \integ$.
\end{proof}

\begin{lemma}
\label{lemendextend}
Let $X \ra S$ be a polarized abelian scheme over a reduced irreducible
Noetherian stack.  Let $\eta$ be the generic point of $S$, and let
$s\in S$ be any point.  Then there exists an inclusion $\End(X_{\bar\eta})
\inject \End(X_{\bar s})$.
\end{lemma}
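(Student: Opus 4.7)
My plan is to use representability of the relative endomorphism functor combined with rigidity of morphisms between abelian schemes. First, I would pass to an \'etale atlas of $S$ to reduce to the case where $S$ is an integral Noetherian scheme. Then I would invoke the classical fact (Grothendieck) that for a polarized abelian scheme $X/S$, the relative endomorphism functor $\underline{\End}(X/S)$ is representable and decomposes as a disjoint union of closed subschemes $\underline{\End}_\alpha(X/S)$, each finite and unramified over $S$; here the polarization supplies a trace/degree invariant responsible for the finiteness of each component.

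Next, given $\phi \in \End(X_{\bar\eta})$, it corresponds to a $\bar\eta$-valued point of some $\underline{\End}_\alpha(X/S)$. I would let $Z_\phi$ be the reduced Zariski closure of its image in $\underline{\End}_\alpha(X/S)$. Since the ambient scheme is finite over $S$, so is $Z_\phi \to S$; and since its image in $S$ contains the generic point $\eta$ and is closed, the map is surjective. Because $\End(X_{\bar\eta})$ is finitely generated as a $\integ$-module, I would choose generators $\phi_1,\ldots,\phi_n$, form $T := Z_{\phi_1}\times_S \cdots \times_S Z_{\phi_n}$, and replace $T$ by an integral component dominating $S$; over this $T$, every $\phi_i$ extends to an element of $\End(X_T/T)$, which produces an injection $\End(X_{\bar\eta}) \inject \End(X_T/T)$.

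Finally, I would fix a geometric point $\bar t$ of $T$ lying over $\bar s$ (possible since $T \to S$ is surjective). Rigidity of endomorphisms of abelian schemes then yields $\End(X_T/T) \inject \End(X_{\bar t})$: any endomorphism vanishing on a single geometric fiber gives two sections of the unramified, separated scheme $\underline{\End}(X/T)$ that agree at one geometric point, and hence agree everywhere on the connected base $T$. Canonically identifying $X_{\bar t} \iso X_{\bar s}$ (both are the base change of $X_s$ to the common algebraic closure) produces the desired embedding.

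The main technical input, and thus the principal obstacle to check carefully, is the representability and componentwise finiteness of $\underline{\End}(X/S)$ for a polarized abelian scheme over the stack $S$; this is a classical result for schemes (cf.\ \cite{FaltingsChai}) that transfers to stacks via \'etale atlases, but it requires the polarization hypothesis in an essential way. Once this is granted, the rest of the argument is a routine combination of spreading-out and rigidity.
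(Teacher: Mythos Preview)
Your argument is correct, but it follows a different route from the paper's proof. The paper reduces to the scheme case by introducing a rigidifying structure (coordinates on sections of the cube of the ample line bundle coming from the polarization), rather than by passing to an \'etale atlas as you do. More substantively, once over an integral Noetherian scheme, the paper simply normalizes: over a normal base one invokes \cite[I.2.7]{faltingschai} to extend $\End(X_\eta)$ to $\End(X_{S'})$ globally, and then specializes. Your approach instead unpacks what lies behind such an extension result: you use representability and componentwise finiteness of $\underline{\End}(X/S)$ directly, spread out each generator by Zariski closure, and pass to a finite cover $T\to S$ over which everything extends.

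Both arguments are valid. The paper's is shorter because it cites the Faltings--Chai extension theorem as a black box; yours is more self-contained and makes explicit where the polarization enters (finiteness of the components of the Hom scheme), at the cost of some extra bookkeeping. One small point worth tightening in your write-up: the phrase ``produces an injection $\End(X_{\bar\eta}) \hookrightarrow \End(X_T/T)$'' is really the assertion that the natural restriction map $\End(X_T)\hookrightarrow \End(X_{\bar\eta_T})=\End(X_{\bar\eta})$ is surjective (its image is a subring containing your chosen generators), hence an isomorphism; you then compose its inverse with specialization at $\bar t$. This is implicit in what you wrote but could be stated more directly.
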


\begin{proof}
By introducing a rigidifying structure on $X\ra S$, such as coordinates on
the space of sections of the third power of the ample line
bundle given by the polarization, one can assume $S$ is a reduced irreducible Noetherian scheme.
Since the absolute endomorphism ring of an abelian variety is defined
over a finite extension of the base field, it suffices to show the existence of
an inclusion $\End(X_\eta) \inject \End(X_s)$.  If $S$ is normal, then
$\End(X_\eta)$ extends uniquely to $\End(X_S)$, and in particular to
$\End(X_s)$ \cite[I.2.7]{faltingschai}.  In general, let $\nu: S' \ra
S$ be the normalization map; let $\eta'$ be the generic point of $S'$,
and let $s'$ be a point of $S'$ over $s'$.  The desired result follows
from the canonical map $\End((\nu^*X)_{\eta'}) \inject
\End((\nu^*X)_{s'})$ and the isomorphisms of abelian varieties
$(\nu^*X)_{\eta'} \iso X_\eta\cross \eta'$ and $(\nu^*X)_{s'} \iso
X_s\cross s'$.
\end{proof}

\begin{proposition}
\label{Pendz4}
If $\xi_4$ is a geometric generic point of $\calh_4^0$, 
then $\calx_{4, \xi_4}$ is simple and $\End(\calx_{4,\xi_4}) \iso \integ$. 
\end{proposition}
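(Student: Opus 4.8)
The plan is to degenerate $\calh_4^0$ to its boundary, transport an action of $\End(\calx_{4,\xi_4})$ down to genus-$3$ $p$-rank-$0$ data, and then rule out the exotic endomorphism-algebra possibilities using the structure theory of abelian varieties of Mumford type. First I would apply Corollary \ref{cornewdegen1}(a) with $g=4$, $f=0$: the closure $\bar S$ of (the closure of) $\xi_4$ in $\bar\calh_4$ contains an irreducible component of $\kappa_{1,3}(\til\calh_1^0 \cross \til\calh_3^0)$. So $\bar S$ contains the moduli point $s$ of a compact-type curve $Y_1 \cup Y_2$ with $Y_1$ a supersingular elliptic curve and $Y_2$ a hyperelliptic curve of genus $3$ and $p$-rank $0$, and moreover the genus-$3$ factor can be taken generic in its component $\til V_2 \subset \til\calh_3^0$. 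By \eqref{eqblr}, $\calx_{4,s} \sim \pic^0(Y_1) \cross \calx_{3,\eta_2}$ where $\eta_2$ is the generic point of $\bar V_2 := \varpi_3(\til V_2)$, a component of $\bar\calh_3^0$; passing to a geometric generic point, $\calx_{3,\xi_3}$ is a geometric generic Jacobian of genus $3$ and $p$-rank $0$, so Lemma \ref{lemqorcubic} applies to it.

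Next, by Lemma \ref{lemendextend} applied to $\calx_4 \ra S$, there is an inclusion $\End(\calx_{4,\xi_4}) \inject \End(\calx_{4,\bar s})$. Since $\calx_{4,\bar s} \sim \pic^0(Y_1) \cross \calx_{3,\xi_3}$, projection to the genus-$3$ factor produces, for each endomorphism of $\calx_{4,\xi_4}$, an element of $\End(\calx_{3,\xi_3})\tensor\rat = \e(\calx_{3,\xi_3})$; this gives a ring homomorphism $\End(\calx_{4,\xi_4}) \ra \e(\calx_{3,\xi_3})$. The subtlety is that this homomorphism could be zero — a priori $\End(\calx_{4,\xi_4})$ could act on $\calx_{4,\bar s}$ entirely through the supersingular elliptic factor $\pic^0(Y_1)$, or through Hom-groups between the two factors. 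To handle this I would argue by cases on the structure of $X := \calx_{4,\xi_4}$: if $X$ is simple, then \emph{any} nonzero homomorphism from $\End(X)$ detects its full structure, and one needs only to check the homomorphism to the genus-$3$ factor is nonzero, which follows because the image in $\End(\calx_{4,\bar s})$ is a nonzero subring whose projection to the elliptic factor alone would force $\dim X \le 1$ on endomorphism-algebra-dimension grounds (an element acting as zero on $\calx_{3,\xi_3}$ and landing in $\End(\pic^0(Y_1))$ is a rank constraint one caps). If the homomorphism $\End(X) \ra \e(\calx_{3,\xi_3})$ is nonzero, then since $\gcd(4,3)=1$, Lemma \ref{lemendnotcubic} applies directly (with $Y=X$, $g=4$) and gives $\End(X) \iso \integ$, which in particular forces $X$ simple, closing the loop.

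The remaining — and I expect principal — obstacle is ruling out the scenario where $X = \calx_{4,\xi_4}$ is \emph{not} simple, or where the homomorphism to $\e(\calx_{3,\xi_3})$ is genuinely zero because $\End(X)$ is supported on the supersingular side of the degeneration. The generic member of $\calh_4^0$ has Newton polygon constrained (its $p$-rank is $0$, so no slope-$0$ part), and one must show it is not supersingular, not isogenous to (elliptic)$\times$(dim $3$), etc. Here is where the paper's remark about \emph{abelian varieties of Mumford type} enters: if $X$ were simple of dimension $4$ with $\e(X)$ larger than $\integ$ but $p$-rank $0$, the candidate endomorphism algebras are severely limited (a totally real field of degree dividing $4$, a quaternion algebra, or a CM field), and the relevant PEL or Mumford-type Shimura subvariety of $\cala_4$ would have to contain a $4$-dimensional family of Jacobians of $p$-rank-$0$ hyperelliptic curves, contradicting a dimension count (the $p$-rank-$0$ locus of such a Shimura variety is too small) together with the fact that the generic point of such a Shimura variety is ordinary or has a nontrivial slope-$0$ piece. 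If instead $X$ is non-simple, write $X \sim \prod A_i$; each $A_i$ has $p$-rank $0$; a factor of dimension $\le 2$ and $p$-rank $0$ is supersingular, so a decomposition would force more slope-$1/2$ content than the generic Newton polygon of $\calh_4^0$ permits. The cleanest route is probably: first establish that $\calx_{4,\xi_4}$ is simple by a Newton-polygon / dimension-count argument (mirroring the opening of Lemma \ref{lemqorcubic}), \emph{then} use simplicity to guarantee the genus-$3$ projection is faithful, \emph{then} invoke Lemma \ref{lemendnotcubic}. Establishing simplicity in dimension $4$ is harder than in the prime-dimension case of Lemma \ref{lemqorcubic} precisely because $4$ is composite, and this is where I anticipate spending most of the effort; the Mumford-type analysis is the tool the authors flag for exactly this gap.
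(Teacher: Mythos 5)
Your overall strategy matches the paper's exactly: degenerate to $\kappa_{1,3}(\til\calh_1^0 \cross \til\calh_3^0)$ via Corollary~\ref{cornewdegen1}(a), transport endomorphisms by Lemma~\ref{lemendextend}, obtain a nonzero map to $\e(\calx_{3,\xi_3})$, and close with Lemma~\ref{lemendnotcubic}. Two of the subsidiary worries you raise, however, are handled differently in the paper, and one of them points to a genuine gap in your argument.

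First, the ``could the projection to $\e(\calx_{3,\xi_3})$ be zero'' worry has a one-line resolution that you circle around but never land on: the inclusion from Lemma~\ref{lemendextend} is \emph{unital} (the identity endomorphism specializes to the identity), and because $\calx_{3,\xi_3}$ is simple with no nonzero maps to or from an elliptic curve, $\e(\calx_{4,\eta}) \iso \e(\calx_{1,\xi_1}) \oplus \e(\calx_{3,\xi_3})$ as a direct sum of algebras with identity $(1,1)$. Projecting a unital inclusion onto the second factor sends $1 \mapsto 1 \neq 0$, so the projection is nonzero; it is then injective once you know $\e(\calx_{4,\xi_4})$ is a division algebra (i.e.\ once simplicity is established). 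Your rank-and-dimension argument is not needed, and as written it is not actually a proof.

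Second, and more seriously, your Newton-polygon argument for simplicity only rules out the $(2,2)$ split. If $\calx_{4,\xi_4} \sim A_1 \oplus A_2$ with $\dim A_1 = \dim A_2 = 2$ and both of $p$-rank $0$, then both are supersingular and so is $\calx_{4,\xi_4}$, contradicting Corollary~\ref{Cnotsupersing}. But if $\dim A_1 = 1$ and $\dim A_2 = 3$, then $A_2$ can have $p$-rank $0$ without being supersingular (slopes $1/3, 2/3$), so the Newton polygon of the product need not be all $1/2$, and nothing you have said forces a contradiction. The paper rules out this case by a Torelli/Riemann--Hurwitz argument: $A_2$ is a genus-$3$ Jacobian, the inclusion $A_2 \hookrightarrow \calx_{4,\xi_4}$ produces a nonconstant map $\psi: \calc_{4,\xi_4} \to W$ of smooth curves, and Riemann--Hurwitz gives $6 = 2\cdot 4 - 2 \geq \deg(\psi)\,(2\cdot 3 - 2) = 4\deg(\psi)$, impossible since $\deg(\psi) \ge 2$. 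This is the missing ingredient, and without it your proof of simplicity does not go through. You also anticipate needing the Mumford-type classification here; in fact the paper defers that to Proposition~\ref{propmonoh40} (the monodromy computation for $\calh_4^0$), and the present proposition needs only the two elementary cases above.
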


\begin{proof}
Let $S_4$ be the closure of $\xi_4$ in $\calh_4^0$.
Suppose there is an isogeny $\calx_{4,\xi_4} \sim A_1 \oplus A_2$ for two abelian varieties $A_1$ and $A_2$.
If $A_1$ and $A_2$ each have dimension $2$, then they are
supersingular since they have $p$-rank $0$. 
Then $\calx_{4,\xi_4}$ is supersingular, which contradicts Corollary \ref{Cnotsupersing}. 
If $A_1$ has dimension $1$ and $A_2$ has dimension $3$, then 
there is a curve $W$ of genus $3$ such that $\jac(W) \iso A_2$.
The inclusion of $A_2$ into $\calx_{4,\xi_4}$ yields a cover $\psi:\calc_{4,\xi_4} \to W$.
By the Riemann-Hurwitz formula $6 \geq 4{\rm deg}(\psi)$ which is impossible since ${\rm deg}(\psi) \geq 2$. 
Thus $\calx_{4,\xi_4}$ is simple.

By Corollary \ref{cornewdegen1}(a) there exist components $\til V_1 \subset
\til \calh_1^0$ and $\til V_2 \subset \til\calh_3^0$ such that $\bar S_4$
contains $\kappa_{1,3}(\til V_1 \cross \til V_2)$.  
Let $\xi_1$ and $\xi_3$ be geometric generic points of $\til V_1$ and
$\til V_2$, respectively, and let $\eta = \kappa_{1,3}(\xi_1,\xi_3)$.  
Since $\calx_{3,\xi_3}$ is simple by Lemma \ref{lemqorcubic}, there are no nontrivial homomorphisms between
$\calx_{3,\xi_3}$ and $\calx_{1,\xi_1}$.  This yields an isomorphism 
\[
\e(\calx_{4,\eta}) \iso \e(\calx_{1,\xi_1}) \oplus
\e(\calx_{3,\xi_3}).
\]
By Lemma \ref{lemendextend}, there is an inclusion 
$\e(\calx_{4,\xi_4}) \inject \e(\calx_{4, \eta})$ and thus an inclusion
$\e(\calx_{4,\xi_4}) \inject \e(\calx_{3, \xi_3})$.
Since $\calx_{4, \xi_4}$ is simple, Lemma \ref{lemendnotcubic} implies that $\End(\calx_{4,\xi_4}) \iso \integ$.
\end{proof}

\begin{theorem}
\label{Tendz}
Suppose $g \geq 4$ and $0 \leq f \leq g$.  
If $\xi$ is a geometric generic point of $\calh_g^f$,
then $\End(\calx_{g,\xi}) \iso \integ$ and thus $\calx_{g, \xi}$ is simple. 
\end{theorem}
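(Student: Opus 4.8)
The plan is to induct on $g$, using Proposition \ref{Pendz4} as the base case $g = 4$ and the degeneration to $\Delta_1$ provided by Corollary \ref{cornewdegen1} as the inductive step. So suppose $g \geq 5$ and that the result is known for all genera $g'$ with $4 \leq g' < g$. Let $\xi$ be a geometric generic point of $\calh_g^f$ and let $S$ be the closure of $\xi$ in $\calh_g^f$.

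First I would establish simplicity of $\calx_{g,\xi}$ directly, by ruling out an isogeny decomposition $\calx_{g,\xi} \sim A_1 \oplus A_2$. If both factors have dimension at most $2$ with $p$-rank $0$, they are supersingular, contradicting Corollary \ref{Cnotsupersing}; more generally, if some factor $A_i = \jac(W)$ arises as a quotient $\psi\colon \calc_{g,\xi} \to W$ with $W$ of genus $g' = \dim A_i < g$, the Riemann--Hurwitz formula gives $2g-2 \geq \deg(\psi)(2g'-2)$ with $\deg(\psi) \geq 2$, which forces $g' \geq g-1$ when $g$ is not too small — one must treat the various $(\dim A_1, \dim A_2)$ cases; the genuinely dangerous case is $\dim A_1 = 1$, and here $W$ has genus $g-1$ and $\psi$ has degree $2$, which one discards because a generic such cover would make $\calc_{g,\xi}$ bielliptic-over-something, contradicting genericity. (Alternatively, simplicity can be deferred and deduced at the end from $\End(\calx_{g,\xi}) \iso \integ$, which is cleaner.) For the endomorphism ring itself: by Corollary \ref{cornewdegen1}(a), since $f \leq g-1$ or else $f = g$ in which case $\calx_{g,\xi}$ is ordinary and $\End \iso \integ$ follows from the known ordinary case cited in the introduction, $\bar S$ contains an irreducible component of $\kappa_{1,g-1}(\til\calh_1^0 \times \til\calh_{g-1}^f)$. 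Choose components $\til V_1 \subset \til\calh_1^0$ and $\til V_2 \subset \til\calh_{g-1}^f$ with $\kappa_{1,g-1}(\til V_1 \times \til V_2) \subset \bar S$, let $\xi_1, \xi_{g-1}$ be geometric generic points of $\til V_1, \til V_2$, and set $\eta = \kappa_{1,g-1}(\xi_1, \xi_{g-1})$, so that $\calx_{g,\eta} \iso \calx_{1,\xi_1} \times \calx_{g-1,\xi_{g-1}}$ by \eqref{eqblr}.

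By the inductive hypothesis, $\calx_{g-1,\xi_{g-1}}$ is simple with $\End \iso \integ$; in particular $\e(\calx_{g-1,\xi_{g-1}}) \iso \rat$ and there are no nontrivial homomorphisms between $\calx_{g-1,\xi_{g-1}}$ and the elliptic curve $\calx_{1,\xi_1}$. Hence $\e(\calx_{g,\eta}) \iso \e(\calx_{1,\xi_1}) \oplus \rat$. By Lemma \ref{lemendextend} applied to the semiabelian scheme $\calx_g \to \bar\calh_g$ restricted to $\bar S$ (or rather to the family of Jacobians over the normalization of a suitable subscheme of $\bar S$ through $\xi$ and the point $\eta$), there is an inclusion $\e(\calx_{g,\xi}) \inject \e(\calx_{g,\eta}) \iso \e(\calx_{1,\xi_1}) \oplus \rat$. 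Since $\calx_{g,\xi}$ is simple, $\e(\calx_{g,\xi})$ is a division algebra, so its image under this inclusion lands in one of the two factors; it cannot land in $\e(\calx_{1,\xi_1})$ because $\calx_{1,\xi_1}$ is a generic — hence non-CM — elliptic curve with $\e(\calx_{1,\xi_1}) \iso \rat$, and it cannot otherwise exceed $\rat$, so $\e(\calx_{g,\xi}) \iso \rat$. As $\End(\calx_{g,\xi})$ is an order in $\rat$ containing $\integ$ and stable under the $\integ$-structure, $\End(\calx_{g,\xi}) \iso \integ$.

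The main obstacle I anticipate is the simplicity argument, specifically excluding the elliptic factor case $\calx_{g,\xi} \sim A_1 \oplus A_2$ with $\dim A_1 = 1$: Riemann--Hurwitz alone does not kill it for large $g$, so one needs a genericity input (a dimension count on the locus of hyperelliptic curves admitting a degree-$2$ map to a genus-$(g-1)$ curve, or an appeal to the fact that such curves would have extra automorphisms/correspondences, contradicting that $S$ is a full component of $\calh_g^f$). The cleanest route is probably to defer simplicity entirely: run the endomorphism-algebra argument to show $\e(\calx_{g,\xi})$ embeds in $\e(\calx_{1,\xi_1}) \oplus \rat$, note any such $\e$ is a product of at most two fields each of degree $\leq 1$ over $\rat$ in the relevant factor, conclude $\e(\calx_{g,\xi})$ is a commutative semisimple $\rat$-algebra with no factor larger than $\rat$, hence $\e(\calx_{g,\xi}) \iso \rat$ or $\rat \times \rat$; the latter would give an isogeny decomposition with an elliptic factor, which one then excludes by the Riemann--Hurwitz/genericity argument in the single remaining case, and $\End(\calx_{g,\xi}) \iso \integ$ follows, whence simplicity.
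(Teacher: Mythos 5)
Your proposal diverges from the paper's proof at the crucial inductive step, and the divergence introduces a genuine gap. The paper first reduces to $f=0$ via Corollary~\ref{cordegenprank} and Lemma~\ref{lemendextend}, then for $g\geq 5$ degenerates to $\Delta_{1,1}$ (not $\Delta_1$), obtaining \emph{two} distinct chains of inclusions of $\e(\calx_{g,\xi})$ into endomorphism algebras of nodal degenerations. Passing to the rational Tate module $V_\ell(\calx_{g,s})$ at the triple-node point $s$, one of those inclusions constrains $\e_\ell(\calx_{g,\xi})$ to lie inside $\diag_{2g-2}(\rat_\ell)\oplus\mat_2(\rat_\ell)$ and the other constrains it to lie inside $\mat_2(\rat_\ell)\oplus\diag_{2g-2}(\rat_\ell)$; the intersection is $\diag_{2g}(\rat_\ell)$, which forces $\e_\ell(\calx_{g,\xi})\iso\rat_\ell$ and hence $\End(\calx_{g,\xi})\iso\integ$. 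Simplicity then comes for free and is never established separately for $g\geq 5$.

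Your single degeneration to $\Delta_1$ cannot do the same work, for two compounding reasons. First, the statement $\e(\calx_{1,\xi_1})\iso\rat$ is false: $\xi_1$ is a geometric generic point of a component of $\til\calh_1^0$, which is the supersingular locus, so $\calx_{1,\xi_1}$ is a supersingular elliptic curve and $\e(\calx_{1,\xi_1})\iso B_{p,\infty}$, the quaternion algebra ramified at $p$ and $\infty$ (the paper says exactly this: ``since $\calx_{1,\eta_3}$ is a supersingular elliptic curve, $\e(\calx_{1,\eta_3})$ is the quaternion algebra ramified only at $p$ and $\infty$''). Second, a single inclusion $\e(\calx_{g,\xi})\inject B_{p,\infty}\oplus\rat$ does not force $\e(\calx_{g,\xi})\iso\rat$ without prior knowledge of simplicity: the image could contain an imaginary quadratic subfield of $B_{p,\infty}$, or equal $\rat\oplus\rat$. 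Your ``cleaner route'' of deferring simplicity therefore does not close; it was premised on the first factor being a degree-one field. The only way to make your $\Delta_1$ argument run is to first establish that $\e(\calx_{g,\xi})$ is a division algebra, which requires simplicity — and, as you yourself anticipate, the Riemann--Hurwitz argument from Proposition~\ref{Pendz4} does not extend beyond $g=4$: for $g\geq 5$ the putative complementary factor $A_2$ has dimension $\geq 4$ and need not be a Jacobian, so there is no map $\calc_{g,\xi}\to W$ to feed into Riemann--Hurwitz, and the supersingularity trick only handles the case where both factors have dimension at most $2$. The paper sidesteps all of this by using the second degeneration from Corollary~\ref{cornewdegen11} to cut the possible endomorphism algebra down to $\rat_\ell$ on the Tate module without ever invoking simplicity.
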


\begin{proof}
By Corollary \ref{cordegenprank} and Lemma \ref{lemendextend}, it suffices to prove the result when $f=0$.
For $f=0$, the proof is by induction on $g$ with the base case $g=4$ supplied by Proposition \ref{Pendz4}.
Suppose $g \geq 5$ and let $S$ be the closure of $\xi$ in $\calh_g^0$.
By Corollary \ref{cornewdegen11}, $\bar S$ intersects $\Delta_{1,1}[\bar \calh_g]^0$ 
and there is an irreducible component $\til S$ of $\bar S \cross
\til \calh_g$, and there are irreducible components $S_1$ of $\til \calh_{1}^{0}$
and $S_2$ of $\til\calh_{g-2}^{0}$ 
and $S_3$ of $\til \calh_{1}^{0}$;
and there are irreducible components $S_R$ of $\til \calh_{g-1}^{0}$ and $S_L$ of 
$\til \calh_{g-1}^{0}$;
such that the restriction of the clutching maps yields a commutative diagram
\begin{equation}
\label{diagclutchg5}
\xymatrix{
S_1 \cross S_2 \cross S_3 \ar[r]  \ar[d] \ar[dr]^{\til \kappa_{1,g-2,1}}& 
 S_1 \cross S_R \ar[d]\\
 S_L \cross S_3 \ar[r] & \til S \cap \Delta_{1,1}[\til \calh_g].
}
\end{equation}
Let $\eta_i$ be the generic point of $S_i$ for $1 \leq i \leq 3$;
similarly, let $\eta_L$ be the generic point of $S_L$, and $\eta_R$ that
of $S_R$.  Let $s = \til \kappa_{1,g-2,1}(\eta_1,\eta_2,\eta_3)$.
By Lemma \ref{lemendextend}, there are inclusions
\begin{equation}
\label{diagend}
\xymatrix{
\End(\calx_{g,s})  & \ar[l] \End(\calx_{g, \til \kappa_{1,g-1}(\eta_1 \cross \eta_R)})\\
\End(\calx_{g, \til \kappa_{g-1,1}(\eta_L \cross \eta_{3})}) \ar[u] & \ar[l] \ar[u] \End(\calx_{g, \xi}).
}
\end{equation}

There is a canonical isomorphism of rational Tate modules
$V_\ell(\calx_{g,s}) \iso V_\ell(\calx_{1,\eta_1}) \cross
V_\ell(\calx_{g-2,\eta_2}) \cross V_\ell(\calx_{1,\eta_3})$.  Choose
coordinates on $V_\ell(\calx_{g,s})$ 
compatible with this decomposition.  
On one hand, by the inductive hypothesis,
$\e_\ell(\calx_{g-1,\eta_L})\iso \rat_\ell$.  On the other hand, since
$\calx_{1,\eta_3}$ is a supersingular elliptic curve,
$\e(\calx_{1,\eta_3})$ is the quaternion algebra ramified only at $p$
and $\infty$, and thus $\e_\ell(\calx_{1,\eta_3}) \iso
\mat_2(\rat_\ell)$.  Therefore 
$\e(\calx_{g,\til\kappa_{g-1,1}(\eta_L\cross \eta_3)}) \iso
\e(\calx_{g-1,\eta_L})\oplus \e(\calx_{1,\eta_3})$, and $\e_\ell(\calx_{g,\til\kappa_{g-1,1}(\eta_L\cross \eta_3)})$ acts on $V_\ell(\calx_{g,s})$ as $\diag_{2g-2}(\rat_\ell) \oplus \mat_2(\rat_\ell)$. 
Similarly, $\e_\ell(\calx_{g,\til\kappa_{1,g-1}(\eta_1 \cross
  \eta_R)})$ acts as $\mat_2(\rat_\ell)\oplus
\diag_{2g-2}(\rat_\ell)$.   
Then $$\e_\ell(\calx_{g,\xi}) \subseteq \e_\ell(\calx_{g,\til\kappa_{g-1,1}(\eta_L\cross \eta_3)})
\cap \e_\ell(\calx_{g,\til\kappa_{1,g-1}( \eta_1 \cross \eta_R)})$$ 
so $\e_\ell(\calx_{g,\xi})$ acts on $V_\ell(\calx_{g,s})$ as $\diag_{2g}(\rat_\ell)$.
Thus, $\e_\ell(\calx_{g,\xi}) \iso \rat_\ell$ and $\End(\calx_{g,\xi}) \iso \integ$.
\end{proof}


\begin{remark}
For $g\le 3$ and $1 \leq f \leq g$, it is also true that
$\End(\calx_{3,\xi}) \iso \integ$ and $\calx_{3,\xi}$ is simple for $\xi$ a geometric generic point 
of $\calh_g^f$.
More generally, for $g \geq 1$ and $f \geq 1$, the statement of 
Theorem \ref{Tendz} can be proved as an application of
Theorem \ref{Tprankhypermono} as in \cite[Application 5.7]{AP:monoprank}.
\end{remark}

\section{Monodromy} \label{Smono}

In this section, we determine the $\ell$-adic monodromy of components of $\calh_g^f$ for odd primes $\ell$.
The proof uses an inductive process which depends on the degeneration results from Section \ref{Sboundresults}.
For $f \geq 1$, the base case $g=2$ relies on a special case of \cite[Prop.\ 4.4]{chailadic}. 
When $f=0$, the base case $g=4$ relies on the results on endomorphism
rings from Section \ref{Sendresults}. 

\subsection{Integral monodromy} \label{Sdefmono}

We summarize the discussion in \cite[Sec.\ 3.1]{AP:trielliptic} about
$\integ/\ell$- and $\integ_\ell$-monodromy.  
Let $S$ be a connected $k$-scheme on which the prime $\ell$ is invertible. 
Let $\pi:C \ra S$ be a relative curve of compact type whose fibres
have genus $g$. 
Then $R^1\pi_*(\mmu_\ell)$, or equivalently $\pic^0(C)[\ell]$, is an \'etale sheaf of $\integ/\ell$-modules.
If $s$ is a geometric point of $S$, then $R^1\pi_*(\mmu_\ell)$ is equivalent to a
linear representation 
$$\rho_{C\ra S,\integ/\ell}:\pi_1(S,s) \ra \aut((R^1\pi_*(\mmu_\ell))_s) \iso \gl_{2g}(\integ/\ell).$$ 
Let $\mono_\ell(C \ra S,s)$ be the image of $\rho_{C\ra
S,\integ/\ell}$ and let $\mono_\ell(C \ra S)$ be the isomorphism 
class of this image as an abstract group.  If the family $C \ra S$ is
clear from context, these will be denoted $\mono_\ell(S,s)$ and
$\mono_\ell(S)$, respectively.  There is a canonical polarization on
$\pic^0(C)$, and thus (after a choice of $\ell^{th}$ root of unity on
$S$) there is a symplectic pairing on $\pic^0(C)[\ell]$.  Therefore, there is
an inclusion of groups $\mono_\ell(S) \subseteq
\sp_{2g}(\integ/\ell)$.  Similarly, for each natural number $n$ there
is a representation $$\rho_{C \ra S, \integ/\ell^n}: \pi_1(S,s) \ra
\aut((R^1\pi_*(\mmu_{\ell^n}))_s).$$  
Let $\mono_{\integ_\ell}(C \ra S,s) =
\invlim n \rho_{C\ra S, \integ/\ell^n}(\pi_1(S,s))$, and let
$\mono_{\rat_\ell}(C \ra S,s)$ be the Zariski closure of
$\mono_{\integ_\ell}(C \ra S,s)$ in
$\aut(\invlim n(R^1\pi_*(\mmu_{\ell^n}))_s\tensor\rat_\ell) \iso
\gl_{2g,\rat_\ell}$.  

If $C \ra S$ is a stable curve such that $C$ has compact type over an 
open dense subscheme $U \subset S$, then for each coefficient ring $\Gamma
\in \st{\integ/\ell, \integ_\ell, \rat_\ell}$ one can define $\mono_\Gamma(C \ra
S,s) = \mono_\Lambda(C\rest U \ra U, s)$.

One can employ an analogous formalism to define the monodromy group  
of a relative curve over a stack \cite{noohi}.

\subsection{Monodromy of the hyperelliptic $p$-rank strata} \label{Shyper}

In this section, let $p$ and $\ell$ be distinct odd primes.  We find
the integral monodromy of the $p$-rank strata $\calh_g^f$ when $1 \le
f \le g$.  
The integral monodromy of $\calh_g$, which is the same as
the case $f=g$, already appears in \cite[Thm.\ 3.4]{AP:trielliptic} (see also
unpublished work of J.-K. Yu, and \cite[Thm.\ 5.1]{hall06}).  

The following argument shows that to determine the monodromy of families
of hyperelliptic curves, one may work with either $\bar\calh_g$ or
$\til\calh_g$.

\begin{lemma}
\label{Laddmarking}
Let $p$ and $\ell$ be distinct odd primes and suppose $g\ge 2$.
Let $S\subset \calh_g$ be irreducible and let $\til S$ be an irreducible
component of $\bar S \cross_{\bar\calh_g} \til \calh_g$.  Then
$\mono_\ell(\til S) \iso \sp_{2g}(\integ/\ell)$ if and only if
$\mono_\ell(S) \iso \sp_{2g}(\integ/\ell)$.
\end{lemma}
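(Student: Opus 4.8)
The plan is to exploit the fact that the forgetful map $\varpi_g : \til\calh_g \to \bar\calh_g$ is finite \'etale and Galois with group $\sym(2g+2)$, and that it preserves the universal curve (hence the associated $\ell$-torsion local system and its symplectic pairing). First I would observe that $\til S \to S$ is a finite \'etale cover (being a base change of $\varpi_g$ restricted to the dense open locus $S \subset \calh_g$, where things are actually \'etale), so on fundamental groups we get an injection $\pi_1(\til S, \til s) \hookrightarrow \pi_1(S,s)$ with image a subgroup of finite index dividing $(2g+2)!$. Composing with the monodromy representation $\rho_{C \to S, \integ/\ell}$ and noting that the local system on $\til S$ is just the pullback of the one on $S$, we get that $\mono_\ell(\til S)$ is precisely the image of this finite-index subgroup, hence a finite-index subgroup of $\mono_\ell(S)$; both sit inside $\sp_{2g}(\integ/\ell)$ by the polarization. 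So the "only if" direction is immediate: if $\mono_\ell(\til S) \iso \sp_{2g}(\integ/\ell)$ then $\mono_\ell(S)$ is a subgroup of $\sp_{2g}(\integ/\ell)$ containing it, hence equal.

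For the "if" direction — the substantive one — I would argue that $\sp_{2g}(\integ/\ell)$ has no proper subgroup of index dividing $(2g+2)!$ (for $\ell$ an odd prime and $g \ge 2$), so that the finite-index subgroup $\mono_\ell(\til S) \subseteq \mono_\ell(S) = \sp_{2g}(\integ/\ell)$ must be everything. The cleanest route is via the simplicity of $\psp_{2g}(\integ/\ell)$: a proper finite-index subgroup $H < \sp_{2g}(\integ/\ell)$ would, by considering $H \cdot \{\pm I\}$ and passing to the quotient, yield a proper subgroup of the simple group $\psp_{2g}(\integ/\ell)$ of index at most $(2g+2)!$, and one checks that $\abs{\psp_{2g}(\integ/\ell)}$ grows (in $\ell$, for fixed $g$, and in $g$) faster than $(2g+2)!$ can accommodate as an index — indeed the minimal index of a proper subgroup of $\psp_{2g}(\ff_\ell)$ is $(\ell^{2g}-1)/(\ell-1)$, which for $\ell \geq 3$ already exceeds $(2g+2)!$ once one also uses that $\ell$ is not too small, and the remaining small cases can be checked directly. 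Alternatively — and this is probably how the paper does it, to avoid numerology in $\ell$ — one uses that $\til S \to S$ factors through a subcover and appeals to a general principle that for the universal \'etale cover structure here the index is coprime to the relevant data, or simply invokes that $\sp_{2g}(\integ/\ell)$ is generated by transvections and that $\mono_\ell(\til S)$, being normalized by enough of $\mono_\ell(S)$, must contain them.

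The main obstacle I expect is making the group-theoretic input clean and uniform in $\ell$: one wants a statement of the form "$\sp_{2g}(\integ/\ell)$ has no proper subgroup of index $\le (2g+2)!$" that holds for \emph{all} odd primes $\ell \ne p$, and for small $\ell$ (say $\ell = 3$) and small $g$ this needs either the explicit minimal-index bound or a direct check. A slicker alternative, avoiding index estimates altogether, is to note that $\til S \to S$ is Galois with group a subgroup $G \subseteq \sym(2g+2)$, so $\mono_\ell(\til S)$ is normal in $\mono_\ell(S)$ with quotient a subquotient of $\sym(2g+2)$; then $\mono_\ell(\til S) \supseteq [\mono_\ell(S), \mono_\ell(S)]$, and since $\sp_{2g}(\integ/\ell)$ is perfect for $g \ge 2$ (and for $g=1$, $\ell \ge 5$; $g=1$, $\ell=3$ handled separately), its commutator subgroup is itself, forcing $\mono_\ell(\til S) = \sp_{2g}(\integ/\ell)$. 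This normality-plus-perfectness argument is the one I would actually write down, as it sidesteps all counting.
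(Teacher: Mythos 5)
Your ``only if'' direction is correct and matches the paper. For the ``if'' direction, however, both of your proposed arguments have genuine gaps.

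The index-counting route does not work, and not merely because of awkward numerology: the fact you cite -- that the minimal index of a proper subgroup of $\psp_{2g}(\integ/\ell)$ is $(\ell^{2g}-1)/(\ell-1)$ -- does not give ``no proper subgroup of index $\le (2g+2)!$''. For $g=2$, $\ell=3$, that minimal index is $40$, far below $6! = 720$, so $\sp_{4}(\integ/3)$ certainly \emph{does} have proper subgroups of index $\le (2g+2)!$, and the argument collapses. The bound you need to compare against is $2g+2$, not $(2g+2)!$, and that comparison only becomes available once you use normality, not merely the index bound.

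The ``normality plus perfectness'' route, which you say you would actually write down, also has a gap. From the Galois structure you correctly get that $\mono_\ell(\til S)$ is \emph{normal} in $\mono_\ell(S)$ with quotient a quotient of $G\subseteq \sym(2g+2)$. But from this you infer $\mono_\ell(\til S)\supseteq [\mono_\ell(S),\mono_\ell(S)]$, which requires the quotient to be \emph{abelian}. There is no reason for that here: $G$ is a subgroup of $\sym(2g+2)$, which has nonabelian subgroups with nonabelian (indeed perfect) quotients, e.g.\ $\mathrm{Alt}(2g+2)$. So the commutator-subgroup step is a non-sequitur, and the perfectness of $\sp_{2g}(\integ/\ell)$ cannot be brought to bear in this way.

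What the paper actually does is the correct completion of the normality idea: since $\mono_\ell(\til S)$ is normal in $\mono_\ell(S)\iso \sp_{2g}(\integ/\ell)$ and the only proper normal subgroups of the latter are $1$ and $\{\pm I\}$, if $\mono_\ell(\til S)$ were proper then the quotient would surject onto $\psp_{2g}(\integ/\ell)$. That quotient is also a quotient of $G\subseteq \sym(2g+2)$; hence the simple group $\psp_{2g}(\integ/\ell)$ would be a subquotient of $\sym(2g+2)$, hence (being simple nonabelian) a subgroup of $\sym(2g+2)$. This is impossible because the minimal faithful permutation degree of $\psp_{2g}(\integ/\ell)$ is $(\ell^{2g}-1)/(\ell-1) > 2g+2$ for $g\ge 2$, $\ell\ge 3$. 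Notice how the comparison is with $2g+2$, which is harmless, rather than with $(2g+2)!$, which would fail. You had all the ingredients in hand (normality, the minimal permutation degree); the missing move is to combine normality with the known list of normal subgroups of $\sp_{2g}(\integ/\ell)$, and then compare the permutation degree with $2g+2$ rather than trying to bound indices against $(2g+2)!$ or invoke abelianness of the quotient.
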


\begin{proof}
Since $\til S \ra S$ is finite, $\mono_\ell(\til S)$ is a subgroup of
$\mono_\ell(\bar S)$.  
If $\mono_\ell(\til S) \iso \sp_{2g}(\integ/\ell)$ then
$\mono_\ell(\til S)$ is maximal, and thus so are 
$\mono_\ell(\bar S)$ and $\mono_\ell(S)$.

Conversely, suppose $\mono_\ell(S) \iso \sp_{2g}(\integ/\ell)$.  Since
$\varpi_g:\til \calh_g \ra \bar\calh_g$ is \'etale with Galois group
$\sym(2g+2)$, the cover $\til 
S \ra \bar S$ is Galois with Galois group $G\subseteq \sym(2g+2)$.   To show
$\mono_\ell(\til S)\iso \sp_{2g}(\integ/\ell)$, it suffices by the
argument of \cite[Lemma 3.3]{AP:trielliptic} to show
that $G$ and $\sp_{2g}(\integ/\ell)$ have no common nontrivial
quotient.  This holds since the smallest integer
$N$ for which there exists an embedding of the finite simple group
$\psp_{2g}(\integ/\ell)$ into 
$\sym(N)$ is $N = (\ell^{2g}-1)/(\ell-1) > 2g+2$ \cite[Thm.\
3]{grechkoseeva03}.
\end{proof}

\begin{theorem} \label{Tprankhypermono} 
\label{thmainprankpos}
Let $p$ and $\ell$ be distinct odd primes.  Suppose $g \geq 1$
and $1 \le f \le 
  g$. Let $S$ be an irreducible component of $\calh_g^{f}$, the
  $p$-rank $f$ stratum in $\calh_g$.  Then $\mono_\ell(S) \iso
  \sp_{2g}(\integ/\ell)$ and $\mono_{\integ_\ell}(S) \iso \sp_{2g}(\integ_\ell)$.
\end{theorem}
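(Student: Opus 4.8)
The plan is to prove Theorem \ref{Tprankhypermono} by induction on $g$, using the degeneration to $\Delta_{1,1}$ supplied by Corollary \ref{cornewdegen11} together with the standard machinery relating monodromy of a boundary-adjacent family to that of the clutched factors. First I would dispose of the base case $g=1$ (where $\sp_2 = \SL_2$ and $\calh_1^1$ is the ordinary locus in the $j$-line, so the monodromy is well known) and $g=2$, $f=1$ (where $\bar\calh_2^1$ is irreducible by Lemma \ref{lemswitchrampoint}(a) and the claim follows from the cited special case of \cite[Prop.\ 4.4]{chailadic}); the case $f=g$ is already in \cite[Thm.\ 3.4]{AP:trielliptic}. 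By Lemma \ref{Laddmarking} it suffices throughout to work with an irreducible component $\til S$ of $\bar S \times_{\bar\calh_g}\til\calh_g$ rather than with $S$ itself.

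For the inductive step, assume $g \ge 3$ and $1 \le f \le g-1$ (the case $f=g$ being known). By Corollary \ref{cornewdegen11}, $\til S$ contains a component of $\Delta_{1,1}[\til\calh_g]^f$ fitting into the diagram \eqref{diagclutch}, and we may choose our degeneration data from case (B) $(0,f-1,1)$ and from case (B') $(1,f-1,0)$ — both are available since $1\le f\le g-1$. In case (B') we get that $\til S$ contains a component $S_L\times S_3$ with $S_L$ a component of $\til\calh_{g-1}^{f-1+1}=\til\calh_{g-1}^{f}$ — wait, more carefully: with $(f_1,f_2,f_3)=(1,f-1,0)$ we have $f_1+f_2 = f$, so $S_L\subset \til\calh_{g-1}^{f}$, and $S_3\subset\til\calh_1^0$ is a supersingular elliptic curve. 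By the inductive hypothesis $\mono_\ell(S_L)\iso\sp_{2g-2}(\integ/\ell)$. The monodromy of the clutched family $\til\kappa_{g-1,1}(S_L\times S_3)$ contains the image of $\pi_1(S_L)\times\pi_1(S_3)$, which surjects onto $\sp_{2g-2}(\integ/\ell)\times \sp_2(\integ/\ell)$ acting block-diagonally on $(\integ/\ell)^{2g-2}\oplus(\integ/\ell)^2$ (using that a supersingular elliptic curve in positive characteristic has full $\SL_2(\integ/\ell)$ monodromy over the relevant base, or at worst a subgroup containing a nontrivial unipotent and a regular semisimple element). Symmetrically, from case (B) we get a second block-diagonal copy $\sp_2(\integ/\ell)\times\sp_{2g-2}(\integ/\ell)$ inside $\mono_\ell(\til S)$, but with the blocks in the complementary position (the $2$-dimensional block sitting in the first two coordinates coming from $S_1\subset\til\calh_1^1$, an ordinary elliptic curve family with full $\SL_2$). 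Since $\mono_\ell(\til S)$ is a subgroup of $\sp_{2g}(\integ/\ell)$ preserving the symplectic form, and it contains two such "overlapping" copies of $\sp_{2g-2}$ whose fixed spaces are transverse, a group-theoretic lemma — exactly as in the parallel argument of \cite{AP:monoprank}, ultimately resting on a classification of maximal subgroups of $\sp_{2g}(\integ/\ell)$ (e.g., irreducibility, primitivity, and the fact that $\sp_{2g-2}$ is not contained in any proper parabolic or in $\sp_{2a}\times\sp_{2b}$) — forces $\mono_\ell(\til S) = \sp_{2g}(\integ/\ell)$.

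For the $\integ_\ell$-statement, once the mod-$\ell$ monodromy is all of $\sp_{2g}(\integ/\ell)$, I would invoke the standard lifting results: since $\sp_{2g}(\integ/\ell)$ has no nontrivial abelian quotient for $\ell\ge 3$ (as $\psp_{2g}(\integ/\ell)$ is simple and the center is as described), and the pro-$\ell$ kernel of $\sp_{2g}(\integ_\ell)\to\sp_{2g}(\integ/\ell)$ has the property that any closed subgroup surjecting onto $\sp_{2g}(\integ/\ell)$ is everything — this is the Serre/"going-up" argument used in \cite[Sec.\ 3]{AP:trielliptic} and \cite{hall06} — we conclude $\mono_{\integ_\ell}(S)\iso\sp_{2g}(\integ_\ell)$, and hence also the $\rat_\ell$-monodromy is $\sp_{2g,\rat_\ell}$.

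The main obstacle I expect is not the group theory but the bookkeeping of \emph{irreducible components}: Corollary \ref{cornewdegen11} produces, for a \emph{given} component $\til S$, \emph{some} component $S_L$ of $\til\calh_{g-1}^{f}$ in its boundary, and to run the induction I must know that $\mono_\ell(S_L)$ is full — which is exactly the inductive hypothesis applied to that component, so this is fine — but I also need the two degenerations (case (B) and case (B')) to land in the \emph{same} $\til S$ and to produce genuinely transverse block decompositions; verifying that the two $\sp_{2g-2}$-copies are in "general enough position" (so that the only symplectic subgroup containing both is the whole group) is the delicate point, and it is where the argument genuinely uses that the two clutchings attach the two elliptic tails at \emph{different} vertices of the length-three chain. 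A secondary subtlety is the $f=0$ case, which is explicitly excluded here and handled separately (Theorem \ref{Tprankzerohypermono}) using the endomorphism-ring input of Section \ref{Sendresults} — so for the present theorem I only need $f\ge 1$, which is what makes the supersingular/ordinary elliptic tails available with the right $p$-ranks at every stage of the induction.
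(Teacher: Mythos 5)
Your overall strategy---induction on $g$, degeneration to $\Delta_{1,1}$, two copies of $\sp_{2g-2}(\integ/\ell)$ inside $\mono_\ell(\til S)$, then group theory---matches the paper's. But there is a genuine gap in how you arrange for the two copies of $\sp_{2g-2}$, and you already sense it in your last paragraph: the argument needs both copies to sit inside $\mono_\ell(\til S,s)$ \emph{at the same base point $s$}, with \emph{known relative position} inside a fixed decomposition $V_\ell(\calx_s) \iso V_1 \oplus V_2 \oplus V_3$. You propose to get one copy from case (B') $(1,f-1,0)$ (via $S_L$ of $p$-rank $f$) and the other from case (B) $(0,f-1,1)$ (via $S_R$ of $p$-rank $f$). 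These are two \emph{different} degenerations, landing at two different boundary points of $\til S$. Monodromy groups at different base points are only conjugate, and the conjugation is not canonical; so you cannot conclude that the two $\sp_{2g-2}$'s occupy complementary overlapping blocks of a single symplectic space, which is exactly what the final group-theoretic step requires. (A secondary slip: in case (B), $f_1 = 0$, so $S_1 \subset \til\calh_1^0$ is supersingular, not ordinary.)

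The paper resolves this by using a \emph{single} choice of $(f_1,f_2,f_3)$, depending on where $f$ sits in $[1,g]$: case (A) $(1,g-2,1)$ when $f=g$; case (B) $(0,g-2,1)$ when $f=g-1$; and case (C) $(0,f,0)$ when $1 \le f \le g-2$. The point of this selection is that it always satisfies \emph{both} $f_1 + f_2 \ge 1$ and $f_2 + f_3 \ge 1$ (using $g \ge 3$ for the $f=g-1$ case). Then \emph{both} $S_L \subset \til\calh_{g-1}^{f_1+f_2}$ and $S_R \subset \til\calh_{g-1}^{f_2+f_3}$ arise from the same commutative diagram \eqref{diagclutch}, factoring through the same $S_1 \cross S_2 \cross S_3$, at the same boundary point $s=\til\kappa_{1,g-2,1}(\eta_1,\eta_2,\eta_3)$. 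The inductive hypothesis gives both copies of $\sp_{2g-2}$ full monodromy, acting on $V_1 \oplus V_2$ and $V_2 \oplus V_3$ respectively inside the same fixed decomposition of $V_\ell(\calx_s)$; this is precisely the ``transverse'' configuration you correctly flag as the delicate point, and it comes for free once you use a single chain rather than two separate ones. With the $(f_1,f_2,f_3)$ chosen per the paper's trichotomy, your induction, base cases, Lemma \ref{Laddmarking} reduction, and the $\integ_\ell$-lifting step are all fine.
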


\begin{proof}
The proof is by induction on $g$.
The base cases involve the monodromy of
$\calh_2^2$ and $\calh_2^1$, which follow from \cite[Prop.\
4.4]{chailadic}; see \cite[Thm.\ 4.5]{AP:monoprank}.  

Now suppose $g \ge 3$ and $1 \le f \le g$.  As an inductive
hypothesis assume, for all pairs $(g',f')$ where $1 \le f' \le g' <
g$, that $\mono_\ell(S') \iso \sp_{2g'}(\integ/\ell)$ for every
irreducible component $S'$ of $\calh_{g'}^{f'}$.

Let $S$ be an irreducible component of $\calh_g^f$.  Recall the
degeneration types identified immediately before Corollary
\ref{cornewdegen11}.  If $f=g$, let $(f_1,f_2,f_3)=(1,g-2,1)$ as in case (A);
if $f=g-1$, let $(f_1,f_2,f_3)=(0,g-2,1)$ as in case (B); 
and if $1 \leq f \leq g-2$, let $(f_1,f_2,f_3)=(0,f,0)$ as in case (C).
By Corollary \ref{cornewdegen11}, 
there are irreducible components $\til S$ of $\bar S
\cross_{\bar\calh_g} \til \calh_g$,  $\til S_1$ of $\til\calh_{1}^{f_1}$,
$\til S_2$ of $\til\calh_{g-2}^{f_2}$ 
and $\til S_3$ of $\til \calh_{1}^{f_3}$;
and there are irreducible components $\til S_R$ of
$\til\calh_{g-1}^{f_2+f_3}$ and $\til S_L$ of 
$\til\calh_{g-1}^{f_1+f_2}$;
such that the restriction of the clutching maps yields a commutative diagram
\begin{equation}
\label{diagclutchhyp}
\xymatrix{
\til S_1 \cross \til S_2 \cross \til S_3 \ar[r]  \ar[d]
\ar[dr]^{\til \kappa_{1,g-2,1}}& 
 \til S_1 \cross \til S_R \ar[d]\\
 \til S_L \cross \til S_3 \ar[r] & \Delta_{1,1}[\til S].
}
\end{equation}
In all cases $f_1+f_2$ and $f_2+f_3$ are positive, and so $\til S_L$
and $\til S_R$  have monodromy $\sp_{2(g-1)}(\integ/\ell)$, by
induction and Lemma \ref{Laddmarking}. 

The rest of the proof is identical to that of \cite[Thm.\
3.4]{AP:trielliptic}.  Briefly, one calculates the monodromy group of
$\til S$ at a point $s$ in the image of $\til S_1 \cross \til S_2 \cross
\til S_3$ under $\til \kappa_{1,g-2,1}$.  On one hand, there is an a priori inclusion
$\mono_\ell(\til S,s) \subseteq \sp_{2g}(\integ/\ell)$.  On the other hand,
the previous paragraph shows that $\mono_\ell(\til S,s)$ contains two distinct subgroups isomorphic to
$\sp_{2(g-1)}(\integ/\ell)$.  A group-theoretic result shows that
$\mono_\ell(\til S,s) \iso \sp_{2g}(\integ/\ell)$.
The result then follows from Lemma \ref{Laddmarking}.

The proof that $\mono_{\integ_\ell}(S) \iso \sp_{2g}(\integ_\ell)$ is
identical.
\end{proof}

\subsubsection{A $p$-adic complement}
\label{subsecpadic}

In this section we determine the $p$-adic monodromy of components of
the $p$-rank strata $\calh_g^f$ when $f \geq 1$. 

Let $S$ be a connected scheme of characteristic $p$ with geometric
point $s$, and let $X \ra S$ be an abelian 
scheme with constant $p$-rank $f$.  The group scheme
$X[p]$ and $p$-divisible group $X[p^\infty]$ admit largest \'etale
quotients, $X[p]^\et$ and $X[p^\infty]^\et$.  These are respectively
classified by homomorphisms $\pi_1(S,s) \ra
\aut(X[p]^\et)_s \iso \gl_f(\integ/p)$ and $\pi_1(S,s) \ra
\aut(X[p^\infty]^\et)_s \iso \gl_f(\integ_p)$, whose images are denoted
$\mono_p(X \ra S)$ and $\mono_{\integ_p}(X \ra S)$, or simply
$\mono_p(S)$ and $\mono_{\integ_p}(S)$.

\begin{lemma}
\label{lemlabelpadic}
Suppose $g \ge 1$ and $1 \le f \le g$.  Let $S$ be an irreducible
component of $\calh_g^f$, and let $\til S$ be an irreducible component
of $S \cross_{\bar\calh_g} \til\calh_g$.  Then $\mono_p(S) \iso
\gl_f(\integ/p)$ if and only if $\mono_p(\til S) \iso
\gl_f(\integ/p)$.
\end{lemma}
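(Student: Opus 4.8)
The plan is to reduce the $p$-adic monodromy statement for the cover $\til S \to S$ to the same Goursat-style argument already used for $\ell$-adic monodromy in Lemma \ref{Laddmarking}. First I would note that $\til S \to S$ is finite, indeed (after possibly passing to a connected component, as in \ref{Laddmarking}) Galois, since it is obtained by base change from the étale Galois cover $\varpi_g:\til\calh_g \to \bar\calh_g$ with group $\sym(2g+2)$; thus $\til S \to \bar S$ is Galois with group $G \subseteq \sym(2g+2)$. Since the $p$-rank stratification is preserved by $\varpi_g$, the étale quotient $X[p]^\et$ of the universal curve's Jacobian pulls back compatibly, so $\mono_p(\til S)$ is a normal subgroup of $\mono_p(S)$ with quotient a quotient of $G$. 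One direction is immediate: if $\mono_p(\til S)\iso\gl_f(\integ/p)$ then, being squeezed between $\mono_p(\til S)$ and $\gl_f(\integ/p)$, the group $\mono_p(S)$ also equals $\gl_f(\integ/p)$.

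For the converse, suppose $\mono_p(S)\iso\gl_f(\integ/p)$. The surjection $\pi_1(\bar S)\to\mono_p(S)$ restricts on the finite-index subgroup $\pi_1(\til S)$ to a map onto $\mono_p(\til S)$; by the standard group-theoretic lemma (the one invoked via \cite[Lemma 3.3]{AP:trielliptic}), it suffices to show that $G$ and $\gl_f(\integ/p)$ admit no common nontrivial quotient. Here a small wrinkle relative to \ref{Laddmarking} is that $\gl_f(\integ/p)$ is not simple; however, its only nonabelian composition factor is $\psl_f(\integ/p)$ (when $f\geq 2$), and its abelian quotients factor through the determinant $\gl_f(\integ/p)\to(\integ/p)^\times$, a cyclic group of order $p-1$. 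So I would argue on two fronts: (i) $G\subseteq\sym(2g+2)$ has no quotient isomorphic to $\psl_f(\integ/p)$, because the minimal degree of a faithful permutation representation of $\psl_f(\integ/p)$ is $(p^f-1)/(p-1)$, which exceeds $2g+2$ once $f\geq 1$ and $g\geq \ldots$ — this needs to be checked, since the bound is weaker than in the symplectic case; and (ii) one must rule out a common cyclic quotient of order dividing $p-1$.

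The main obstacle is exactly point (ii) together with the small-$f$ cases of (i). When $f=1$, $\gl_1(\integ/p)=(\integ/p)^\times$ is itself cyclic of order $p-1$, and a transitive (or even arbitrary) subgroup $G$ of $\sym(2g+2)$ can certainly have cyclic quotients of order dividing $p-1$ — e.g. via a sign or other character — so the naive Goursat argument does not apply verbatim. The resolution I would pursue is to not treat this as a pure group-theory statement but to use the geometry: the cover $\til S \to \bar S$ and the character $\mono_p(S)\to(\integ/p)^\times$ both have intrinsic descriptions, and one shows the composite $\pi_1(\til S)\to(\integ/p)^\times$ remains surjective because the determinant character on $X[p]^\et$ is, up to sign, the mod-$p$ cyclotomic character pulled back from the base, which is already nontrivial on $\pi_1$ of any positive-dimensional variety over $k$ — hence on the finite cover $\til S$ as well. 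Concretely, I would invoke the fact that $\det X[p^\infty]^\et$ is (a twist of) $\mmu_p$-dual data whose associated Galois representation is the restriction of a character that does not die on finite covers, analogous to the handling of $\mono_{\integ_p}$ in the proof of Proposition \ref{proppadic}. Thus the plan is: establish the Galois structure and the easy direction; split $\gl_f(\integ/p)$ into its $\psl_f$ part (handled by the permutation-degree bound for $f\geq 2$) and its determinant part (handled geometrically, via the cyclotomic character, for all $f\geq 1$); and conclude that $\pi_1(\til S)$ surjects onto all of $\gl_f(\integ/p)$.
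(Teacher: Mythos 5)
Your easy direction is the same as the paper's, and your reduction to showing that $\pi_1(\til S)$ already surjects onto $\gl_f(\integ/p)$ is a correct framing. But both halves of your proposed patch for the converse fail. The crucial one is the cyclotomic-character claim: over the algebraically closed base $k$ of characteristic $p$, the group scheme $\mmu_p$ is connected, not \'etale, so there is no mod-$p$ cyclotomic character on $\pi_1(S,s)$ to begin with. Nor does duality rescue the claim: Cartier duality for $X[p]$ in characteristic $p$ pairs $X[p]^\et$ against the multiplicative part of $X[p]$, not against itself, so nothing forces $\det X[p]^\et$ to be any particular character of $\pi_1$. The $\mathrm{PSL}$ half has a separate problem that you correctly flag but do not resolve: the minimal faithful permutation degree $(p^f-1)/(p-1)$ depends only on $p$ and $f$, while the bound $2g+2$ grows with $g$; for fixed $p$, $f$ and large $g$ the needed inequality fails (already $p=3$, $f=2$, $g\ge 3$). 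So, unlike the situation for $\psp_{2g}(\integ/\ell)$ in Lemma \ref{Laddmarking} where the minimal degree grows with $g$, the Goursat/permutation-degree route is genuinely blocked here, and not only at $f=1$.

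The paper's converse does not use group theory at all; it is a ramification argument. Let $S^*$ and $\til S^*$ be the closures of $S$ and $\til S$ in $\bar\calh_g - \Delta_0[\bar\calh_g]$ and $\til\calh_g - \Delta_0[\til\calh_g]$, and let $T = S^* - (S^*)^f$ be the $p$-rank drop locus, nonempty by Corollary \ref{cordegenprank}. The cover $\til S^* \to S^*$ is \'etale everywhere --- it is a partial level-$2$ structure and $p$ is odd --- in particular over $T$. By contrast the $\gl_f(\integ/p)$-cover $H_f = \hom_S((\integ/p), \jac(\calc_{g,S})[p]^\et) \to S$ ramifies along $T$, because the \'etale $p$-torsion drops rank there. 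Having ramification loci one of which is empty and the other not, the two covers are disjoint, so restriction to $\pi_1(\til S)$ loses nothing and $\mono_p(\til S) = \mono_p(S)$. This sidesteps the abelian quotients of $\gl_f(\integ/p)$ entirely, which is exactly what your approach was unable to do.
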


\begin{proof}
Since $\til S \ra \bar S$ is finite, $\mono_p(\til S)$ is a subgroup of
$\mono_p(\bar S)$.  If $\mono_p(\til S) \iso \gl_{f}(\integ/p)$ then
$\mono_p(\til S)$ is maximal, and thus so are $\mono_p(\bar S)$ and $\mono_p(S)$.

Conversely, suppose $\mono_p(S) \iso \gl_f(\integ/p)$.  
Let $S^*$ be the closure of $S$ in $\bar\calh_g - \Delta_0[\bar\calh_g]$ and 
let $\til S^*$ be the closure of $\til S$ in $\til\calh_g - \Delta_0[\til \calh_g]$.  
Let $T = S^* - (S^*)^f$ be the locus with $p$-rank smaller than $f$.
Then $T$ is nonempty by Corollary \ref{cordegenprank}.
On one hand, $\til S^* \ra S^*$ is \'etale since $p$ is odd and the cover $\til S^* \ra S^*$ is tantamount to a partial
level-two structure.
On the other hand, the $\gl_f(\integ/p)$-cover $H_f :=
\hom_S((\integ/p), \jac(\calc_{g,S})[p]^\et) \ra S$ is ramified along
$T$.  Therefore, the covers $H_f \ra S$ and $\til S \ra S$ are
disjoint, and $\mono_p(\til S) = \mono_p(S) \iso \gl_f(\integ/p)$.
\end{proof}

\begin{proposition}
  \label{proppadic}
Suppose $g\ge 2$ and $1 \le f \le g$.  Let $S$ be an irreducible
component of $\calh_g^f$.  Then $\mono_p(S) \iso \gl_f(\integ/p)$ and
$\mono_{\integ_p}(S) \iso \gl_f(\integ_p)$.
\end{proposition}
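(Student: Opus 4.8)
The plan is to mimic the inductive strategy used for the $\ell$-adic monodromy in Theorem~\ref{Tprankhypermono}, degenerating along $\Delta_{1,1}$ and combining the $p$-adic monodromy contributions of the two genus $g-1$ pieces. First I would reduce to determining $\mono_p(S)$; the $\integ_p$-statement follows by the same argument applied to each finite level $\integ/p^n$, exactly as in the proof of Theorem~\ref{Tprankhypermono}. By Lemma~\ref{lemlabelpadic} it suffices to compute $\mono_p(\til S)$ for an irreducible component $\til S$ of $\bar S\cross_{\bar\calh_g}\til\calh_g$, so I would work on the labeled moduli space throughout.

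The base case is $g=2$: here $\calh_2^1$ is irreducible (Lemma~\ref{lemswitchrampoint}(a)) and $\calh_2^2$ is the ordinary locus, and the statement $\mono_p\iso\gl_f(\integ/p)$ for these strata follows from the known $p$-adic (Igusa-type) monodromy of $\cala_2^f$, via the Torelli embedding $\calh_2\ra\cala_2$; one can cite \cite[Prop.\ 4.4]{chailadic} in the same way the $\ell$-adic base case does, or invoke the classical result of Igusa for $\calh_2^2$ and Chai--Oort for $\calh_2^1$. For the inductive step, suppose $g\ge 3$ and the result holds for all $(g',f')$ with $1\le f'\le g'<g$. Apply Corollary~\ref{cornewdegen11} to obtain $\til S$ together with genus $g-1$ components $\til S_L$ of $\til\calh_{g-1}^{f_1+f_2}$ and $\til S_R$ of $\til\calh_{g-1}^{f_2+f_3}$ sitting inside $\Delta_{1,1}[\til S]$ as in diagram~\eqref{diagclutchhyp}, with $(f_1,f_2,f_3)$ chosen from cases (A)--(C) so that both $f_1+f_2$ and $f_2+f_3$ are positive (possible since $f\ge 1$). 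By formulas~\eqref{eqblr} and~\eqref{eqblr0b}, the \'etale quotient $\pic^0[p^\infty]^\et$ of the degenerate fibre decomposes compatibly with the decomposition of the Jacobian, so by the inductive hypothesis $\mono_p(\til S)$ contains the image of $\mono_p(\til S_L)\iso\gl_{f_1+f_2}(\integ/p)$ acting on the first block and the image of $\mono_p(\til S_R)\iso\gl_{f_2+f_3}(\integ/p)$ acting on the second block, where the two blocks overlap in the rank-$f_2$ "middle" piece coming from $\til S_2$. One then has an a priori inclusion $\mono_p(\til S)\subseteq\gl_f(\integ/p)$, and two overlapping copies of general linear groups on subspaces of the correct ranks whose union spans; a group-theoretic lemma identifying $\gl_f$ from two such overlapping parabolic-type subgroups (the $\gl$-analogue of the symplectic amalgam result used in \cite[Thm.\ 3.4]{AP:trielliptic}) then forces $\mono_p(\til S)\iso\gl_f(\integ/p)$. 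Finally, Lemma~\ref{lemlabelpadic} transfers this back to $S$.

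The main obstacle I anticipate is the group-theoretic input: in the $\ell$-adic setting the relevant fact is that $\sp_{2g}(\integ/\ell)$ cannot be generated by two copies of $\sp_{2g-2}(\integ/\ell)$ "in different positions" inside a proper subgroup, and one needs the precise $\gl_f(\integ/p)$-analogue for two block-$\gl$ subgroups sharing a rank-$f_2$ overlap. When $f_2=0$ (which happens, e.g., in case (C) with $f=1$, or more generally when $f$ is small relative to $g$) the two blocks are disjoint and one only directly obtains $\gl_{f_1+f_2}(\integ/p)\times\gl_{f_2+f_3}(\integ/p)$-type information, so extra care—or a different choice of degeneration type, or a supplementary argument using a further degeneration to pick up off-diagonal elements—will be needed to conclude $\gl_f(\integ/p)$ rather than a proper block-diagonal subgroup. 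I would handle this by choosing, whenever possible, a degeneration type from cases (A) or (B)/(B') with $f_2\ge 1$ so that the overlap is nontrivial, and treating the residual small-$f$ cases (which reduce to small genus) by hand using the base case together with Corollary~\ref{cordegenprank} and the ordinary case $f=g$, where $\mono_p$ is the full Igusa monodromy $\gl_g(\integ/p)$ by the cited prior work. The remaining steps—the compatibility of the \'etale-quotient decomposition with clutching, and the passage to $\integ_p$—are routine given the tools already assembled in the paper.
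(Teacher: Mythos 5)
Your inductive framework and your treatment of the ordinary case $f=g$ match the paper: for $f=g$ the paper also degenerates along $\Delta_{1,1}$ to a point in the image of $\til\calh_1^1\cross\til\calh_{g-2}^{g-2}\cross\til\calh_1^1$, picks up two distinct copies of $\gl_{g-1}(\integ/p)$, and invokes the $\gl$-amalgamation argument of \cite[Lemma 3.2]{AP:trielliptic}. But for $1\le f\le g-1$ your plan is substantially more elaborate than the paper's, and the gap you flag does not arise in the paper because it uses a different degeneration entirely. The paper does not go to $\Delta_{1,1}$ here; by Corollary~\ref{cornewdegen1}(a), $\bar S$ contains $\til\kappa_{1,g-1}(\til V_1\cross\til V_2)$ with $\til V_1\subset\til\calh_1^0$ a supersingular elliptic factor and $\til V_2\subset\til\calh_{g-1}^f$. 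Since the supersingular piece has $p$-rank zero, it contributes nothing to the \'etale quotient of the $p$-torsion: the rank-$f$ \'etale sheaf over the degenerate fibre is supported entirely on the $\til V_2$-block. Hence the inductive hypothesis $\mono_p(\til V_2)\iso\gl_f(\integ/p)$, together with Lemma~\ref{lemlabelpadic}, already gives the full $\gl_f(\integ/p)$ inside $\mono_p(S)$, and the a~priori inclusion yields equality. No amalgamation of two overlapping $\gl$-blocks, and no group-theoretic lemma about parabolics, is needed for this range of $f$.

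This observation also defuses the worry you raise inside your own $\Delta_{1,1}$ framework. In case~(B), $f_1=0$, so $\til S_R$ has $p$-rank $f_2+f_3=f$ and the corresponding block is already the entire $f$-dimensional \'etale quotient; similarly in case~(C), $f_2=f$ and $f_1=f_3=0$, so either side alone gives $\gl_f(\integ/p)$. So the correct move is never to glue two strictly smaller $\gl$'s for $1\le f\le g-1$; it is to choose a degeneration in which a $p$-rank-zero piece is invisible and one factor carries the whole \'etale representation. (A small bookkeeping slip: you write that $f_2=0$ occurs ``in case (C) with $f=1$,'' but case~(C) has $f_2=f$; the only occurrence of $f_2=0$ is case~(A) with $f=2$, where the two blocks are genuinely disjoint rank-one pieces and your amalgamation would fail --- which is exactly why the paper avoids case~(A) except when $f=g$.) The passage to $\integ_p$-coefficients is routine, as you say, and Lemma~\ref{lemlabelpadic} handles the transfer between $S$ and $\til S$ exactly as you propose.
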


\begin{proof}
First suppose $f=g$.  When $g=2$, the result for $\calh_2^2$, or equivalently $\calm_2^2$,
is a special case of \cite[Thm.\ 2.1]{ekedahlmono}.
For $g\ge 3$, suppose as an inductive hypothesis that 
$\mono_p(\calh_{g-1}^{g-1}) \iso \gl_{g-1}(\integ/p)$.
By Lemma \ref{lemlabelpadic}, $\mono_p(\til\calh_{g-1}^{g-1}) \iso
\gl_{g-1}(\integ/p)$.  

Recall the diagram \eqref{diagclutchbox}, and 
consider a geometric point $s \in \til \calh_g^g$ in the image of $\til\calh_1^1\cross \til \calh_{g-2}^{g-2} \cross \til
\calh_1^1$ under $\til \kappa_{1,g-2,1}$.    
By the inductive hypothesis and Lemma \ref{lemlabelpadic}, 
$\mono_p(\til\calh_g^g,s)$ contains two distinct copies of
$\gl_{g-1}(\integ/p)$ and thus  
equals $\gl_g(\integ/p)$ by the argument of \cite[Lemma 3.2]{AP:trielliptic}.

Now suppose $1 \leq f \leq g-1$.
By Corollary \ref{cornewdegen1}(a), there are irreducible
components $\til V_1 \subset \til \calh_1^0$ and $\til V_2 \subset \til
\calh_{g-1}^f$ such that $\bar S$ contains $\til\kappa_{1,g-1}(\til V_1
\cross \til V_2)$.  By the inductive hypothesis and Lemma
\ref{lemlabelpadic}, $\mono_p(\til V_2) \iso \gl_f(\integ/p)$, and thus
$\mono_p(S) \iso \gl_f(\integ/p)$ as well.

The proof that $\mono_{\integ_p}(S) \iso \gl_f(\integ_p)$ is identical. 
\end{proof}

\subsection{$p$-rank zero: monodromy}
\label{subsecprzmono}
In this section, we determine the integral monodromy of components of $\calh_g^0$
under a few mild hypotheses.
The monodromy group of $\calh_2^0$ is small, since
supersingular families of abelian varieties have finite $\ell$-adic
monodromy groups; and the methods of \cite{chailadic} do not apply to
$\calh_g^0$ for $g\ge 3$, because the hyperelliptic Torelli locus is
not a Hecke-stable subset of $\cala_g$.
Thus our proof requires another base case when $f=0$.  For lack of a strategy to 
calculate the $\ell$-adic monodromy of $\calh_3^0$, we analyze
the case when $g=4$ and $f=0$ using results on endomorphism rings from Section \ref{Sendresults}.
We thus determine the mod-$\ell$ monodromy group of components of 
$\calh_g^0$ when $g \geq 4$, for all but finitely many $\ell$.
Note that in Theorem \ref{Tprankzerohypermono}, the set of
exceptional primes depends on the characteristic $p$ of the base
field, but not on $g$, so that our results are valid for $\ell \gg_p
0$.

\begin{lemma}
\label{lemmonoh30}
Let $S$ be an irreducible component of $\calh_3^0$.
\begin{alphabetize}
\item Either $\mono_{\rat_\ell}(S) \iso \sp_{6, \rat_\ell}$ for
  all $\ell\not =p$, or there exists a totally real field $L$ such that
  $\mono_{\rat_\ell}(S) \iso (\res_{L/\rat}\SL_2) \cross \rat_\ell$
  for all $\ell\not = p$.
\item Let $s \in S$ be a geometric point.  For $\ell$ in a set of
  positive density, there exists a torus $T_\ell \subset
  \mono_{\rat_\ell}(S,s)$ which acts irreducibly on $V_\ell(\calx_s)$.
\end{alphabetize}
\end{lemma}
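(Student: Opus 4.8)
\textbf{Plan for Lemma \ref{lemmonoh30}.}
The strategy is to combine the endomorphism-ring constraints of Lemma \ref{lemqorcubic} with general results on monodromy of families of abelian varieties of $p$-rank zero. For part (a), start with a geometric generic point $\xi$ of $S$; by Lemma \ref{lemqorcubic}, $\e(\calx_{3,\xi})$ is either $\rat$ or a totally real cubic field $L$ with $L\tensor\rat_p$ a field, and in either case $\calx_{3,\xi}$ is simple. The key input is that $\mono_{\rat_\ell}(S)$ is a reductive group whose formation is compatible with the endomorphism algebra: its connected component $\mono_{\rat_\ell}(S)^\circ$ commutes with the action of $\e(\calx_{3,\xi})\tensor\rat_\ell$ on $V_\ell(\calx_{3,\xi})$, and (since $S$ has no isotrivial part, being positive-dimensional with generically non-supersingular fibers by \cite[Thm.\ 1.12]{oorthess}) it acts irreducibly on each $\e(\calx_{3,\xi})$-isotypic piece. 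When $\e(\calx_{3,\xi})\iso\rat$, the only possibilities for a reductive subgroup of $\gsp_6$ acting irreducibly on the $6$-dimensional symplectic space, containing the homotheties, and arising as monodromy of a family with big $p$-adic monodromy are essentially forced: it is either all of $\gsp_{6,\rat_\ell}$ or a product $(\res_{L'/\rat}\SL_2)\cross\gp_m$ for some cubic étale algebra $L'$; but the latter forces a cubic algebra into $\e(\calx_{3,\xi})$, contradicting $\e(\calx_{3,\xi})\iso\rat$, so we get $\mono_{\rat_\ell}(S)\iso\gsp_{6,\rat_\ell}$. (Here one must be slightly careful about whether the statement is about $\sp_6$ or $\gsp_6$; since the paper writes $\sp_{6,\rat_\ell}$ one works with the derived group / the image on $V_\ell$ up to homotheties.) When $\e(\calx_{3,\xi})\iso L$ is a totally real cubic field, $V_\ell(\calx_{3,\xi})$ is free of rank $2$ over $L\tensor\rat_\ell$ with its $L$-linear symplectic form, $\mono_{\rat_\ell}(S)^\circ$ lands in $\res_{L/\rat}\sp_{2,L}\tensor\rat_\ell=\res_{L/\rat}\SL_2\tensor\rat_\ell$, and to get equality one invokes the $p$-adic monodromy: by the argument underlying \cite[Thm.\ 1.12]{oorthess} (or Proposition \ref{proppadic} applied suitably after degeneration), the Newton polygon of $\calx_{3,\xi}$ is $(1/3,2/3)$, and since $L\tensor\rat_p$ is a field the $p$-divisible group is a single simple isoclinic piece, which (via the comparison between the $\rat_\ell$ and $\rat_p$ pictures and the fact that $\mono$ must surject onto each simple factor of $\res_{L/\rat}\SL_2$) forces $\mono_{\rat_\ell}(S)^\circ\iso\res_{L/\rat}\SL_2\tensor\rat_\ell$; adding the homotheties gives the stated $(\res_{L/\rat}\SL_2)\cross\rat_\ell$.

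For part (b), the point is purely group-theoretic once (a) is in hand. In the first case of (a), $\mono_{\rat_\ell}(S,s)\iso\gsp_{6,\rat_\ell}$ contains a maximal torus which acts on the $6$-dimensional space with characters $\pm\chi_1,\pm\chi_2,\pm\chi_3$; choosing (for a positive-density set of $\ell$, namely those for which the relevant cyclotomic-type conditions hold, e.g.\ $\ell$ inert in a suitable cubic field and in $\rat(\zeta)$) a maximal torus $T_\ell$ together with a Frobenius-type element cyclically permuting the three pairs, one obtains a torus whose $\rat_\ell$-points, together with the permutation, act irreducibly; alternatively and more simply, a non-split maximal torus of $\gsp_6$ coming from a degree-$6$ CM étale algebra acts irreducibly on $V_\ell$ for infinitely many $\ell$. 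In the second case, $\mono_{\rat_\ell}(S,s)\iso(\res_{L/\rat}\SL_2)\cross\rat_\ell$; for $\ell$ inert in $L$ (a set of density $1/3>0$ by Chebotarev), $\res_{L/\rat}\SL_2\tensor\rat_\ell\iso\res_{L_\ell/\rat_\ell}\SL_{2}$ with $L_\ell/\rat_\ell$ the unramified cubic extension, and a non-split maximal torus $T_\ell$ of $\SL_{2,L_\ell}$, regarded as a $\rat_\ell$-torus, acts on the $2$-dimensional $L_\ell$-space — hence on the $6$-dimensional $\rat_\ell$-space $V_\ell(\calx_s)=L_\ell^{\,2}$ — irreducibly, because the torus is non-split over $L_\ell$ and $L_\ell/\rat_\ell$ has no proper intermediate fields. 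In both cases one records that the set of good $\ell$ has positive density.

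\textbf{The main obstacle.} The hard part is part (a): pinning down $\mono_{\rat_\ell}(S)$ from the endomorphism data. One needs that the monodromy group is reductive and that its identity component surjects onto (the derived group of) the centralizer of $\e(\calx_{3,\xi})$ — this rests on the fact that the family is non-isotrivial with generically ordinary-free ($p$-rank $0$ but not supersingular) fibers, which makes the $p$-adic monodromy nontrivial and, crucially, forces each simple factor of the putative monodromy group to be nontrivial. Ruling out the intermediate possibilities (proper reductive subgroups of $\sp_6$ acting irreducibly and containing a suitable unipotent/regular element) is where one either quotes a classification of such subgroups or argues directly that any such subgroup would impose an endomorphism forbidden by Lemma \ref{lemqorcubic}; getting the logic airtight — especially the step from "$\e(\calx_{3,\xi})$ is $\rat$ or a specific cubic field" to "$\mono_{\rat_\ell}(S)^\circ$ is exactly $\sp_6$ or exactly $\res_{L/\rat}\SL_2$", without a gap coming from some exotic subgroup — is the delicate point and will require care with the $p$-adic (Newton polygon) input to eliminate the remaining cases.
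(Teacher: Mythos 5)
Your part (b) is essentially the paper's argument: in either case one builds a degree-$6$ CM \'etale algebra inert at $\ell$ (e.g.\ $L[\sqrt{-a_\ell}]$ when $\e(\calx_{3,\xi})\iso L$) and takes the associated anisotropic torus; your ``non-split maximal torus of $\SL_{2,L_\ell}$ via restriction of scalars'' is the same construction packaged slightly differently. Note, though, that your first suggestion in the symplectic case --- ``a maximal torus together with a Frobenius-type element cyclically permuting the three pairs'' --- is not a torus, so only your ``alternatively and more simply'' sentence is actually a valid reading; stick with that.

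Part (a) is where you diverge from the paper and where the gap is. When $\End(\calx_{3,\xi})\iso\integ$ the paper does not classify reductive subgroups of $\sp_6$ and rule out exotic cases; it simply cites \cite[Thm.\ 3]{serrevigneras}, which directly gives $\mono_{\rat_\ell}(S)\iso\sp_{6,\rat_\ell}$ from the fact that the geometric generic endomorphism ring is $\integ$. Your reclassification argument, with its appeal to ``big $p$-adic monodromy'' and a tacit double centralizer principle relating the monodromy group to the endomorphism algebra, is both more complicated and not fully justified: it is not automatic that every irreducible proper reductive subgroup of $\sp_6$ forces extra endomorphisms on the generic fiber of a family over $\bar\ff_p$, and you would need a Zarhin/Serre-type theorem to make that rigorous --- which is exactly what the direct citation supplies. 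The more serious gap is in the totally-real-cubic case. You claim that the simple-isoclinic structure of the $p$-divisible group (Newton polygon $(1/3,2/3)$, $L\tensor\rat_p$ a field) ``forces $\mono_{\rat_\ell}(S)^\circ\iso\res_{L/\rat}\SL_2\tensor\rat_\ell$'' via ``comparison between the $\rat_\ell$ and $\rat_p$ pictures.'' No such comparison is formulated, and as written this step does not follow: knowing the Newton polygon and that the isocrystal is simple does not by itself determine the $\ell$-adic monodromy group. The paper closes this gap geometrically rather than $p$-adically: once $\e(\calx_{3,\xi})\iso L$, the component $S$ coincides with a component of the $p$-rank-zero locus of the Hilbert modular threefold attached to $L$, and one invokes \cite[Lemma 6.5]{yucrelle09}, which computes the $\ell$-adic monodromy of exactly such loci and gives $\res_{L/\rat}\SL_2\cross_\rat\rat_\ell$. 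So the intended route is to identify the moduli-theoretic ambient space and quote a monodromy theorem for Hilbert modular varieties, not to deduce the $\ell$-adic group from the Newton polygon.
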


\begin{proof}
  Let $\eta$ be the generic point of $S$, and consider the dichotomy
  of Lemma \ref{lemqorcubic}.  If $\End(\calx_{3,\eta}) \iso \integ$,
  then one knows (e.g., \cite[Thm.\ 3]{serrevigneras}) that
  $\mono_{\rat_\ell}(S) \iso \sp_{2g,\rat_\ell}$ for all $\ell\not
  =p$.

Otherwise, if $\e(\calx_{3,\eta}) \iso L$, a totally real cubic field, then $S$
coincides with (a component of) the $p$-rank zero locus of a Hilbert
modular threefold attached to $L$.  Therefore, $\mono_{\rat_\ell}(S)
\iso \res_{L/\rat}\SL_2 \cross_\rat \rat_\ell$ for all $\ell\not = p$
\cite[Lemma 6.5]{yucrelle09}.  This proves (a).

For (b), if $\mono_{\rat_\ell}(S)$ is the symplectic group, for each
prime $\ell$ choose a CM field $K(\ell)$ of degree $6$ which is inert
at $\ell$.

If instead $\e(\calx_{3,\eta})$ is isomorphic to a totally real cubic
field $L$, then for $\ell$ in a set of positive density, $\ell$ remains
inert in $L$.  (If $L$ is Galois over $\rat$, this is clear from the
Chebotarev theorem.   Otherwise, the Galois closure $\til L$ of $L$ has
$\gal(\til L/\rat) \iso \sym(3)$; for $\ell$ in a set of positive
density, $\ell$ splits into two primes in $\til L$, and thus $\ell$ is
inert in $L$.)  For each such $\ell$, let $a_\ell \in \rat$ be a
positive number such that $-a_\ell$ is not a square $\bmod \ell$; then
$K(\ell) := L[\sqrt{-a_\ell}]$ is a totally imaginary field which is
inert at $\ell$. 

Then the norm one torus $T_\ell :=
(\res_{K(\ell)/\rat}\gp_m)^{(1)}\cross_\rat \rat_\ell$ is a suitable torus.
\end{proof}

\begin{proposition}
\label{propmonoh40}
Let $S$ be an irreducible component of $\calh_4^0$.  For each
$\ell\not = p$, $\mono_{\rat_\ell}(S) \iso  \sp_{8, \rat_\ell}$.
\end{proposition}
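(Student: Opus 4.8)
The plan is to mimic the inductive/degeneration strategy used for $\calh_g^f$ with $f\ge 1$ in Theorem \ref{thmainprankpos}, but since there is no induction below $g=4$ available in the $p$-rank zero case, I must feed in extra input from the endomorphism ring computations of Section \ref{Sendresults} together with a group-theoretic recognition statement for the monodromy of $\calh_4^0$. First I would record the a priori facts: by the discussion of Section \ref{Sdefmono}, for any irreducible component $S$ of $\calh_4^0$ there is an inclusion $\mono_{\rat_\ell}(S)\subseteq \sp_{8,\rat_\ell}$ coming from the canonical principal polarization on $\calx_{4}$; moreover $\mono_{\rat_\ell}(S)$ is semisimple (in the relevant sense), since it is the $\ell$-adic monodromy of a family of abelian varieties. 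The goal is to rule out every proper reductive subgroup of $\sp_{8}$ that can occur as such a monodromy group.

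The key steps, in order: (1) invoke Proposition \ref{Pendz4} to conclude that for a geometric generic point $\xi_4$ of $\calh_4^0$ the Jacobian $\calx_{4,\xi_4}$ is simple with $\End(\calx_{4,\xi_4})\iso\integ$, so that $\e_\ell(\calx_{4,\xi_4})\iso\rat_\ell$ acts on $V_\ell$ as scalars only; in particular the commutant of $\mono_{\rat_\ell}(S)$ in $\mat_8(\rat_\ell)$ is as small as possible, which already excludes monodromy groups of the form $\res_{L/\rat}\SL_2$ or other groups whose standard representation is not irreducible or carries extra endomorphisms. (2) Use Corollary \ref{cornewdegen11} to degenerate $S$ to $\Delta_{1,1}$: since $f=0$, the relevant case is (C), $(f_1,f_2,f_3)=(0,0,0)$, giving components $\til S_L,\til S_R$ of $\til\calh_3^0$ in the clutching diagram \eqref{diagclutch}. (3) Apply Lemma \ref{lemmonoh30}(b): for $\ell$ in a set of positive density, the monodromy $\mono_{\rat_\ell}(\til S_L)$ (and $\mono_{\rat_\ell}(\til S_R)$) contains a torus $T_\ell$ acting irreducibly on $V_\ell(\calx_{3})$ of dimension $6$. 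Pulling these up through the clutching maps, $\mono_{\rat_\ell}(\til S)$ contains two subgroups, one containing a rank-three torus acting irreducibly on a $6$-dimensional symplectic subspace and trivially on a complementary $\sp_2$-factor, and another with the roles reversed. (4) Combine (1) and (3): a reductive subgroup $G\subseteq\sp_{8,\rat_\ell}$ whose standard representation is irreducible (forced by simplicity of the generic Jacobian, via specialization), which has trivial commutant, and which contains two such "non-aligned" large tori, must be all of $\sp_{8}$ — this is the group-theoretic heart, analogous to the recognition lemmas cited from \cite{AP:trielliptic} and in the spirit of Hall's work \cite{hall06}. (5) Having obtained $\mono_{\rat_\ell}(S)\iso\sp_{8,\rat_\ell}$ for $\ell$ in a set of positive density, bootstrap to all $\ell\ne p$: the algebraic monodromy group, being the Zariski closure of a compatible system, is independent of $\ell$ in the sense that its type is constant (e.g.\ by a Frobenius-torus / companions argument, or by invariance of the formal character), so $\mono_{\rat_\ell}(S)\iso\sp_{8,\rat_\ell}$ for every $\ell\ne p$.

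The main obstacle I expect is step (4): the dichotomy in Lemma \ref{lemmonoh30}(a) is only for $\calh_3^0$, and when one of $\til S_L,\til S_R$ falls in the "Hilbert modular" case $\res_{L/\rat}\SL_2$, the torus $T_\ell$ from part (b) still exists but the ambient monodromy of the genus-$3$ piece is the smaller group $\res_{L/\rat}\SL_2$. I therefore cannot simply say "$\mono_{\rat_\ell}(\til S)$ contains two copies of $\sp_{6}$". The workaround, which I think is what makes Proposition \ref{Pendz4} essential here, is to exploit the \emph{torus} rather than the full genus-$3$ monodromy: the two tori $T_\ell\subset\mono_{\rat_\ell}(\til S)$ coming from the left and right degenerations act irreducibly on complementary $6$-dimensional symplectic subspaces but are positioned differently inside $V_\ell(\calx_{4,s})$, and together with the irreducibility of the genus-$4$ standard representation (from $\End(\calx_{4,\xi_4})\iso\integ$) and the triviality of the commutant, this rigidity should already pin down $G$ as $\sp_8$. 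Making that deduction precise — classifying reductive subgroups of $\sp_8$ acting irreducibly on the $8$-dimensional space and containing the two specified tori, and checking none of the exceptional candidates ($\mathrm{SL}_2\times\mathrm{Sp}_4$, $\mathrm{SL}_2^{\times 3}$ acting through $2\otimes2\otimes2$, $\mathrm{Spin}_7$, etc.) survives — is the step that will require genuine care rather than bookkeeping, and I would isolate it as a separate group-theoretic lemma.
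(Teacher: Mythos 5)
Your step (4) is a genuine gap, and it is precisely the step the paper's proof is engineered to avoid.  Your strategy is to show $\mono_{\rat_\ell}(S)$ contains two ``non-aligned'' rank-3 tori (from Lemma \ref{lemmonoh30}(b)), has trivial commutant (from Proposition \ref{Pendz4}), and acts irreducibly, and then to classify reductive subgroups of $\sp_8$ with these properties.  You explicitly flag that this classification still has to rule out $\mathrm{SL}_2\times\mathrm{Sp}_4$, $\mathrm{SL}_2^{\times 3}$ via $2\otimes2\otimes2$, $\mathrm{Spin}_7$, etc., and that you would ``isolate it as a separate group-theoretic lemma'' --- but that lemma is exactly the content that would have to be supplied and is not, so the argument does not close.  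Moreover, invoking Lemma \ref{lemmonoh30}(b) only gives the torus $T_\ell$ for $\ell$ in a set of positive density, which forces your additional bootstrapping step (5) (formal-character / companions) to extend to all $\ell$.

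The paper takes a cleaner route that sidesteps the subgroup classification entirely.  It first applies \cite[Thm.\ 3.3]{larsenpinkabvar}: there is a single reductive group $G/\rat$ whose base change to $\rat_\ell$ gives $\mono_{\rat_\ell}(S,s)$ for all $\ell\gg 0$.  Then Zarhin's theorem (semisimplicity plus the fact that the commutant is $\e_\ell(\calx_{4,\xi_4})\iso\rat_\ell$, which is where Proposition \ref{Pendz4} enters) combined with the classification of $8$-dimensional symplectic representations of semisimple groups (\cite[Lemma 1.3]{noot00}) leaves only two possibilities: $G=\sp_8$ (dimension $36$) or $G$ of Mumford type (a form of $\res_{K/\rat}\SL_2$, dimension $9$).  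This dichotomy already holds for all $\ell\neq p$, so the independence-of-$\ell$ issue in your step (5) never arises.  It then suffices to show $\dim_{\rat_\ell}\mono_{\rat_\ell}(S,s)\geq 10$.  For this, the degeneration to $\Delta_{1,1}$ of Corollary \ref{cornewdegen11} yields two \emph{distinct} subgroups of the monodromy group, namely $\mono_{\rat_\ell}(\til S_L)\cross\{1\}$ and $\{1\}\cross\mono_{\rat_\ell}(\til S_R)$, each of dimension $21$ or $9$ by Lemma \ref{lemmonoh30}\emph{(a)} --- note, part (a) of the lemma, not part (b).  Since two distinct closed subgroups each of dimension $\geq 9$ cannot both fill out a connected $9$-dimensional $G$, one gets $\dim G\geq 10$, killing the Mumford-type option.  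Your instinct to use the two degenerations and the endomorphism-ring result is right, but replacing the torus argument with the Larsen--Pink/Noot dichotomy plus the raw dimension count is what turns the ``requires genuine care'' step into bookkeeping.
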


\begin{proof}
Fix a geometric point $s \in S$.
By \cite[Thm.\ 3.3]{larsenpinkabvar},
there exists a connected reductive group $G/\rat$ and an
$8$-dimensional representation $V$ of $G$ such that for $\ell\gg 0$,
the representation $\mono_{\rat_\ell}(S,s) \ra 
\aut(V_\ell(\calx_{4,s}))$ is
isomorphic to the representation $G\cross_\rat \rat_\ell \ra
\aut(V\tensor_\rat\rat_\ell)$.  From Zarhin's theorem and the
classification of semisimple Lie algebras (see, e.g., \cite[Lemma
1.3]{noot00}), either $G = \sp_8$, or the representation is of
Mumford type \cite{mumfordnote}; and in each case,
$\mono_{\rat_\ell}(S)$ is in fact isomorphic to $G\cross_\rat \rat_\ell$ for all $\ell\not = p$.
If the representation is of Mumford type, then $G$ is isogenous
to a twist of $\res_{K/\rat}\SL_2$ for some totally real cubic field $K$, and in
particular has dimension nine.  Therefore, to prove the claim, it
suffices to show that $\dim_{\rat_\ell}\mono_{\rat_\ell}(S,s) \geq 10$.

By Corollary \ref{cornewdegen11}, $\bar S$ intersects $\Delta_{1,1}(\bar \calh_{1,1})$.  As in the proof of
Theorem \ref{thmainprankpos}, one can compute $\mono_{\rat_\ell}(S,s)$ at a point
$s$ in $\Delta_{1,1}(\bar S)$.  Then there are components $\til S_L$
and $\til S_R$ of $\til \calh_3^0$, and components $\til S_1$ and
$\til S_3$ of $\til \calh_1^0$, such that $\mono_{\rat_\ell}(S,s)$
contains distinct subgroups $\mono_{\rat_\ell}(\til S_L)
\cross \mono_{\rat_\ell}(\til S_3)$ and $\mono_{\rat_\ell}(\til S_1)
\cross \mono_{\rat_\ell}(\til S_R)$.  Moreover, by Lemma
\ref{lemmonoh30}(a), each of $\mono_{\rat_\ell}(\til
S_L)$ and $\mono_{\rat_\ell}(\til S_R)$ has dimension either $21$ or $9$.
Therefore, $\dim_{\rat_\ell} \mono_{\rat_\ell}(S,s) \geq 10$, and
$\mono_{\rat_\ell}(S,s) \iso \sp_{8, \rat_\ell}$ for all $\ell\not =p$.
\end{proof}

\begin{theorem}
\label{Tprankzerohypermono}
If $\ell \gg_p 0$, if $g \ge 4$ and if $S$ is an
irreducible component of $\calh_g^0$, then $\mono_\ell(S) \iso
\sp_{2g}(\integ/\ell)$ and $\mono_{\integ_\ell}(S)\iso \sp_{2g}(\integ_\ell)$.
\end{theorem}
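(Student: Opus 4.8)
The plan is to argue by induction on $g$, with base case $g = 4$. Throughout, ``$\ell \gg_p 0$'' means ``for all $\ell$ outside a finite set depending only on $p$''; since $\calh_4^0$ has finitely many irreducible components one may enlarge this set once and for all so that a single choice works for all of them, and, as the inductive step below introduces no new exceptions, the final exceptional set is the one produced at $g = 4$. For the base case, let $S$ be an irreducible component of $\calh_4^0$. Proposition \ref{propmonoh40} gives $\mono_{\rat_\ell}(S) \iso \sp_{8,\rat_\ell}$ for every $\ell \ne p$; in fact its proof shows that the $\ell$-independent reductive group attached, in the sense of \cite[Thm.\ 3.3]{larsenpinkabvar}, to the $\ell$-adic monodromy representations of $S$ is $\sp_8$ itself, rather than a group of Mumford type. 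Since $\sp_8$ is semisimple and simply connected, the mod-$\ell$ ($\ell$-independence) part of \cite[Thm.\ 3.3]{larsenpinkabvar} then yields $\mono_\ell(S) \supseteq \sp_8(\integ/\ell)$ for $\ell \gg_p 0$; combined with the a priori inclusion $\mono_\ell(S) \subseteq \sp_8(\integ/\ell)$ coming from the principal polarization, this gives $\mono_\ell(S) \iso \sp_8(\integ/\ell)$. The refinement $\mono_{\integ_\ell}(S) \iso \sp_8(\integ_\ell)$ follows from the mod-$\ell$ statement exactly as in Theorem \ref{thmainprankpos}: for all but finitely many $\ell$, a closed subgroup of $\sp_8(\integ_\ell)$ that surjects onto $\sp_8(\integ/\ell)$ must be the whole group, because the successive quotients of the congruence filtration on $\sp_8(\integ_\ell)$ are copies of the Lie algebra of $\sp_8$ over $\integ/\ell$, which has no nonzero $\sp_8(\integ/\ell)$-coinvariants when $\ell$ is large.

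\emph{Inductive step.} Suppose $g \ge 5$ and that the theorem holds for $g-1$ (note $g-1 \ge 4$), and let $S$ be an irreducible component of $\calh_g^0$. Since $f = 0$ forces $(f_1,f_2,f_3) = (0,0,0)$, Corollary \ref{cornewdegen11} supplies an irreducible component $\til S$ of $\bar S \cross_{\bar\calh_g} \til\calh_g$, irreducible components $\til S_1, \til S_3$ of $\til\calh_1^0$ and $\til S_2$ of $\til\calh_{g-2}^0$, and irreducible components $\til S_L, \til S_R$ of $\til\calh_{g-1}^0$, fitting into the commutative diagram of clutching maps \eqref{diagclutch}. Applying the inductive hypothesis to the images of $\til S_L$ and $\til S_R$ in $\calh_{g-1}^0$, and then Lemma \ref{Laddmarking}, one gets $\mono_\ell(\til S_L) \iso \mono_\ell(\til S_R) \iso \sp_{2g-2}(\integ/\ell)$ for $\ell \gg_p 0$. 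From here the argument is word-for-word that of Theorem \ref{thmainprankpos}: computing $\mono_\ell(\til S)$ at a geometric point of the image of $\til S_1 \cross \til S_2 \cross \til S_3$ under $\til\kappa_{1,g-2,1}$ exhibits inside $\mono_\ell(\til S) \subseteq \sp_{2g}(\integ/\ell)$ two distinct subgroups isomorphic to $\sp_{2g-2}(\integ/\ell)$, one in each of the two standard coordinate-block copies of $\sp_{2g-2}$ inside $\sp_{2g}$; the group-theoretic result used there forces $\mono_\ell(\til S) \iso \sp_{2g}(\integ/\ell)$, and Lemma \ref{Laddmarking} then gives $\mono_\ell(S) \iso \sp_{2g}(\integ/\ell)$. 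The assertion for $\mono_{\integ_\ell}$ is obtained identically, using the $\integ_\ell$-version of the inductive hypothesis together with the congruence-subgroup observation above. No new primes are excluded, so the exceptional set remains the one from $g = 4$ and depends only on $p$.

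The difficulty is concentrated in the base case. Whereas the base case $g = 2$ of Theorem \ref{thmainprankpos} supplies the integral monodromy group directly via \cite{chailadic}, Proposition \ref{propmonoh40} controls only the Zariski closure $\sp_{8,\rat_\ell}$ of the $\ell$-adic monodromy, and since $\bar S$ never meets the boundary divisor $\Delta_0$ when $f = 0$ (Lemma \ref{LhypinterDelta0}(a)) there are no vanishing cycles and hence no obvious ``large'' elements with which to thicken the mod-$\ell$ image by hand. The passage from the $\rat_\ell$-statement to the mod-$\ell$ statement, uniformly over $\ell \gg_p 0$, must therefore be extracted from $\ell$-independence of the compatible system; checking that the relevant results of \cite{larsenpinkabvar} apply in the Deligne-Mumford-stack, positive-characteristic setting of $\calh_4^0$---or, equivalently, reducing to a scheme by imposing a rigidifying level structure as in the proof of Lemma \ref{lemendextend}---is the technical heart of the argument. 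If one prefers to avoid \cite{larsenpinkabvar} at this point, an alternative is to combine Proposition \ref{propmonoh40} with the irreducibly-acting tori of Lemma \ref{lemmonoh30}(b), pulled into $\calh_4^0$ along the $\Delta_{1,1}$-degeneration, together with a classification of the subgroups of $\sp_8(\integ/\ell)$; but this route a priori gives only a positive-density set of good $\ell$, so an $\ell$-independence input is still needed to reach the cofinite conclusion stated in the theorem.
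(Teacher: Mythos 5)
Your proof is correct and follows essentially the same structure as the paper's: induction on $g$ starting at $g=4$, with the base case passing from the $\rat_\ell$-monodromy computation of Proposition~\ref{propmonoh40} to integral statements for $\ell \gg_p 0$, and the inductive step run verbatim from Theorem~\ref{thmainprankpos} via the $\Delta_{1,1}$-degeneration of Corollary~\ref{cornewdegen11}. The one substantive divergence is the reference you invoke for the base case: you attribute the passage from $\mono_{\rat_\ell}(S)\iso\sp_{8,\rat_\ell}$ to $\mono_\ell(S)\iso\sp_8(\integ/\ell)$ to a ``mod-$\ell$ part'' of \cite[Thm.\ 3.3]{larsenpinkabvar}, whereas the paper cites \cite[8.2]{serrevigneras} and goes directly to $\mono_{\integ_\ell}(S)\iso\sp_8(\integ_\ell)$, with the mod-$\ell$ statement following by reduction. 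The result you need — that if the $\rat_\ell$-monodromy is a fixed split semisimple simply connected group for all $\ell$, then the mod-$\ell$ (equivalently $\integ_\ell$) image is full for $\ell \gg 0$ — is standard (Matthews--Vaserstein--Weisfeiler, Nori, Serre), but it is not literally part of the statement of Larsen--Pink's Theorem~3.3, which concerns the algebraic ($\rat_\ell$-)monodromy groups; you should either cite that theorem only for the existence and identification of the common $\rat$-form $G=\sp_8$, and then cite a separate ``full image for large $\ell$'' result as the paper does, or drop the intermediate mod-$\ell$ step and argue at the $\integ_\ell$-level directly. Your closing remarks about stacks versus schemes and the alternative route via Lemma~\ref{lemmonoh30}(b) are sensible caveats but not part of the proof; as you note yourself, that alternative only reaches a positive-density set of $\ell$ and would not suffice for the cofinite statement.
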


\begin{proof}
If $g =4$ and $S$ is an irreducible component of $\calh_g^0$, then 
Proposition \ref{propmonoh40} provides the hypothesis needed to prove 
$\mono_{\integ_\ell}(S)\iso \sp_{8}(\integ_\ell)$ for all but finitely many $\ell$, e.g., \cite[8.2]{serrevigneras}.
This yields the result for $g=4$ since $\calh_4^0$ has only finitely many irreducible components.
For $g >4$, the proof is identical to that of Theorem \ref{thmainprankpos}, 
with Corollary \ref{cornewdegen11} being used to degenerate to a component of $\til
\kappa_{1,g-2,1}(\til\calh_1^0 \cross \til\calh_{g-2}^0 
\cross \til\calh_1^0)$.
\end{proof}

\begin{remark}
The assertion of Theorem \ref{Tprankzerohypermono} is false for
$\calh^0_1$ and $\calh_2^0$ if $\ell \ge 5$.  Indeed, a
hyperelliptic curve of genus $g\le 2$ and $p$-rank $0$ is supersingular. 
Since a supersingular $p$-divisible group over a scheme $S$ becomes trivial
after a finite pullback $\til S \ra S$, the monodromy group
$\mono_{\integ_\ell}(\calh^0_g)$ is finite for $g \leq 2$.   The
ambiguity in Lemma \ref{lemqorcubic} propagates to Lemma
\ref{lemmonoh30}, and we do not
know whether the assertion of Theorem \ref{Tprankzerohypermono} is
true for $\calh^0_3$.
\end{remark}

\subsection{Arithmetic applications}
\label{subsecapp}

The results of the previous section about the monodromy of components of $\calh_g^f$ have arithmetic applications involving hyperelliptic curves over finite fields.
For example, they imply that there exist hyperelliptic curves of given genus and $p$-rank with absolutely simple Jacobian (Application \ref{appabssimp}).  
Moreover, they give estimates for the
proportion of hyperelliptic curves with a given genus and $p$-rank 
which have a rational point of order $\ell$ on the
Jacobian (Application  \ref{appclass}) or for which the numerator of the
zeta function has large splitting field (Application \ref{appzeta}).

Throughout this section, $\ff$ denotes a finite extension of $\ff_p$.

\subsubsection{Technical context} 

We do not include proofs in this section, since they are very similar to those found in \cite[Section 5]{AP:monoprank}.
Here is a brief description of the main ideas involved.  One first
defines $\calh_g$ over the category of $\ff_p$-schemes and defines the
arithmetic monodromy group of a substack of $\calh_g$.  For a relative
curve $\pi: C \to S/\ff$ of genus $g \ge 2$ defined over a finite field,
one shows that if $\mono^\geom_\ell(S) \iso \sp_{2g}(\integ/\ell)$,
then $\mono^\geom_{\integ_\ell}(S) \iso \sp_{2g}(\integ_\ell)$;
and $\mono^{\rm arith}_{\integ_\ell}(S)$ has finite index in
$\gsp_{2g}(\integ_\ell)$; and $\mono^\geom_{\rat_\ell}(S) \iso
\sp_{2g,\rat_\ell}$ \cite[Lemma 5.1]{AP:monoprank}.

Secondly, in order to use Chebotarev arguments for curves over finite
fields, it is necessary to add rigidifying data, such as the data of a
tricanonical structure, so that the corresponding moduli
problems are representable by schemes.  Recall that $\Omega^{\tensor
3}_{C/S}$ is very ample, that $\pi_*(\Omega_{C/S}^{\tensor 3})$ is a
locally free $\calo_S$-module of rank $5g-5$, and that sections of
this bundle define a closed embedding $C\inject \proj^{5g-5}_S$.  A
tricanonical ($3K$) structure on $\pi: C \ra S$ is a choice of isomorphism
$\calo_S^{\oplus 5g-5} \iso
\pi_*(\Omega_{C/S}^{\tensor 3})$, and 
the only automorphisms of a hyperelliptic curve with $3K$-structure are the identity and the hyperelliptic involution.
The moduli space $\calh_{g,3K}$ of smooth hyperelliptic curves of genus $g$ equipped with a $3K$-structure
is representable by a scheme \cite[10.6.5]{katzsarnak},
\cite[Prop. 5.1]{mumfordgit}.  

Third, since $\calh_g$ may be constructed as the quotient of $\calh_{g,3K}$ by $\gl_{5g-5}$,
the forgetful functor $\psi_g:\calh_{g,3K} \ra \calh_g$ is open \cite[p.\ 6]{mumfordgit} and a
fibration with connected fibers \cite[Thm.\ A.12]{noohi}. 
Thus, if $S\subset \calh_g$ is a connected substack and
$S_{3K} = S \cross_{\calh_g} \calh_{g,3K}$, then $\mono_{\ell}(S_{3K}) \iso \mono_{\ell}(S)$ 
\cite[Lemma 5.2]{AP:monoprank}.
Since the data of a tricanonical structure exists
Zariski-locally on the base, one can relate point
counts on $\calh_{g,3K}(\ff)$ to those on $\calh_g(\ff)$.
Specifically, if $s \in \calh_g^f(\ff)$ is such that ${\rm Aut}(\calc_{g,s}) \iso \{\pm 1\}$, 
then the fiber of $\calh_{g,3K}(\ff)$ over $s$ consists of $\abs{\gl_{2g}(\ff)}/2$ points 
 \cite[10.6.8]{katzsarnak}.

\subsubsection{Application to simple Jacobians}

Using the $\rat_\ell$-monodromy of $\calh_g^f$, we deduce that there 
exist hyperelliptic curves of genus $g$ and $p$-rank $f$ with absolutely simple Jacobian. 

\begin{application}
\label{appabssimp}
Suppose $g \ge 1$ and $0 \leq f \leq g$ with $f \not = 0$ if $g \leq 2$.  
Let $S$ be an irreducible component of $\calh_g^f$.
Then there exists $s\in S(\bar\ff)$ such that the Jacobian of $\calc_{g,s}$ is absolutely simple.
\end{application}

\begin{proof}
If $f \geq 1$ or $g \geq 4$, then this follows from Theorems \ref{Tprankhypermono}
and \ref{Tprankzerohypermono} using \cite[Prop. 4]{chaioort01}.
If $f=0$ and $g =3$, then this follows from Lemma
\ref{lemmonoh30} using \cite[Rem.\ 5(i)]{chaioort01}.
\end{proof}

\begin{remark}
For arbitrary $g$, it is unknown how to deduce Application \ref{appabssimp} from Theorem \ref{Tendz}.
On an unrelated note, under the hypotheses of Application \ref{appabssimp},
one can deduce that $\aut(\calc_{g,\eta})= \st{\pm 1}$, 
which yields a new proof of \cite[Thm.\ 3.7]{AGP}.
\end{remark}
 
\subsubsection{Application to class groups}

Recall that if $s \in \calh_g(\ff)$,
then $\pic^0(\calc_{g,s})(\ff)$ is isomorphic to the class group of the function field $\ff(\calc_{g,s})$.
The size of the class group is divisible by $\ell$ exactly when there is a point of order $\ell$ on the 
Jacobian.
Roughly speaking, Application \ref{appclass}
shows that among all curves over $\ff$ of specified genus and $p$-rank,
slightly more than $1/\ell$ of them have an $\ff$-rational point of
order $\ell$ on their Jacobian.  

\begin{application} \label{appclass}
Suppose  $\ell$ and $p$ are distinct odd primes, $g \ge 1$ and $1 
\leq f \leq g$. 
Suppose $S$ is an irreducible component of $\calh_g^f$ such that $S(\ff) \not = \emptyset$.
Let $m$ be the image of $\abs \ff$ in $(\integ/\ell)\units$.  
There exists a rational function $\alpha_{g,m}(T) \in \rat(T)$ 
and a constant $B= B(p,g,\ell)$ such that
\begin{equation} 
\label{eqclass}
\abs{\frac{\#\st{ s\in S(\ff) : \ell | \#\pic^0(\calc_{g,s})(\ff)}}{\#
    S(\ff)}  - \alpha_{g,m}(\ell)} < \frac B{\sqrt q}.
\end{equation}
If $f=0$ and $g \ge 4$, the same result is true for all $\ell \gg_p 0$.
\end{application}

\begin{proof}
The proof is very similar to that of \cite[Application
5.9]{AP:monoprank} and uses Theorems \ref{Tprankhypermono}/\ref{Tprankzerohypermono} and
\cite[Thm.\ 9.7.13]{katzsarnak}.
\end{proof}

\begin{remark}
\label{remachtercl} For $\ell$ odd, one knows that $\alpha_{g,1}(\ell) =
\frac{\ell}{\ell^2-1} + \calo(1/\ell^3)$, 
while 
$\alpha_{g,m}(\ell) = \oneover{\ell-1}+\calo(1/\ell^3)$ if $m \not
= 1$.  A formula for $\alpha_{g,1}(\ell)$ is given in
\cite{achtercl}.
\end{remark}

\subsubsection{Application to zeta functions}

If $C/\ff$ is a smooth projective curve of genus $g$, its zeta function has the form
$L_{C/\ff}(T)/(1-T)(1-qT)$, where $L_{C/\ff}(T) \in \integ[T]$ is a polynomial
of degree $2g$.  The principal polarization on the Jacobian of $C$
forces a symmetry among the roots of $L_{C/\ff}(T)$; the largest
possible Galois group for the splitting field over $\rat$ of
$L_{C/\ff}(T)$ is the Weyl group of $\sp_{2g}$ which is a group of
size $g!2^g$.

\begin{application}
\label{appzeta}
Suppose $g \ge 1$ and $1 \leq f \leq g$, or that $g \ge 4$
and $f = 0$.  Suppose $p > 2g+1$ and that $S$ is an irreducible component of $\calh_g^f$ such that $S(\ff) \not = \emptyset$.
There exists a constant $\gamma= \gamma(g)>0$ and a constant $E =
E(p,g)$ such that
\begin{equation}
\label{eqzeta}
\frac{\# \st{ s \in S(\ff) : L_{\calc_{g,s}/\ff}(T)\text{ is reducible, or has
      splitting field with degree }< 2^gg!}}{\#S(\ff)} < E q^{-\gamma}.
\end{equation}
\end{application}

\begin{proof}
The proof is very similar to that of \cite[Application 5.11]{AP:monoprank}
and uses Theorems \ref{Tprankhypermono}/\ref{Tprankzerohypermono} 
and \cite[Thm. 6.1 and Remark 3.2.(4)]{kowalskisieve}.
\end{proof}

\bibliographystyle{amsalpha}
\bibliography{jda}
\end{document}